\documentclass[12pt]{article}
\usepackage{amsmath, amsthm, amssymb}
\usepackage[top=1.25in, bottom=1.25in, left=1.0in, right=1.0in]{geometry}
\usepackage{hyperref}
\usepackage{color}
\usepackage{verbatim}
\usepackage{tikz,tkz-graph}
\usepackage{marginnote}
\usepackage[margin=1in]{caption}

\newcommand{\aside}[1]{\marginnote{\scriptsize{#1}}[0cm]}
\newcommand{\aaside}[2]{\marginnote{\scriptsize{#1}}[#2]}

\makeatletter
\newtheorem*{rep@theorem}{\rep@title}
\newcommand{\newreptheorem}[2]{
\newenvironment{rep#1}[1]{
 \def\rep@title{#2 \ref{##1}}
 \begin{rep@theorem}}
 {\end{rep@theorem}}}
\makeatother

\theoremstyle{plain}
\newtheorem{thm}{Theorem}
\newreptheorem{thm}{Theorem}
\newtheorem{prop}[thm]{Proposition}
\newreptheorem{prop}{Proposition}
\newtheorem{lem}[thm]{Lemma}
\newreptheorem{lem}{Lemma}

\newtheorem*{lemmaA}{Tashkinov's Lemma}
\newtheorem*{PEL}{Parallel Edge Lemma}
\newtheorem*{GS}{Goldberg--Seymour Conjecture}
\newtheorem*{main}{Theorem 20}
\newtheorem*{main2}{Theorem 11}
\newreptheorem{lemma}{Lemma}
\newtheorem{conj}[thm]{Conjecture}
\newreptheorem{conj}{Conjecture}
\newtheorem{cor}[thm]{Corollary}
\newreptheorem{cor}{Corollary}

\theoremstyle{definition}

\theoremstyle{remark}

\newcommand{\fancy}[1]{\mathcal{#1}}
\newcommand{\C}{\fancy{C}}

\newcommand{\W}{\fancy{W}}
\newcommand{\IN}{\mathbb{N}}

\newcommand{\T}{\fancy{T}}

\newcommand{\set}[1]{\left\{ #1 \right\}}

\newcommand{\setbs}[2]{\left\{ #1 : #2 \right\}}
\newcommand{\card}[1]{\left|#1\right|}
\newcommand{\size}[1]{\left\Vert#1\right\Vert}
\newcommand{\ceil}[1]{\left\lceil#1\right\rceil}
\newcommand{\floor}[1]{\left\lfloor#1\right\rfloor}

\newcommand{\irange}[1]{\left[#1\right]}

\newcommand{\parens}[1]{\left( #1 \right)}

\newcommand{\DefinedAs}{\mathrel{\mathop:}=}

\newcommand{\dclaw}[1]{d_{\text{claw}}\left( #1 \right)}

\newcommand{\vph}{\varphi}
\newcommand{\vphn}{\overline{\varphi}}

\newcommand{\claim}[2]{{\noindent\bf Claim #1.}~{\it #2}~~}
\newenvironment{claimproof}[1]{\par\noindent\underline{Proof:}\space#1}{\leavevmode\unskip\penalty9999
\hbox{}\nobreak\hfill\quad\hbox{$\qed$}}
%
%
\def\aftermath{\par\vspace{-\belowdisplayskip}\vspace{-\parskip}\vspace{-\baselineskip}}

\title{Short fans and the 5/6 bound for line graphs}
\author{Daniel W. Cranston\thanks{Department of Mathematics and Applied
Mathematics, Viriginia Commonwealth University, Richmond, VA;
\texttt{dcranston@vcu.edu}; 
The first author's research is partially supported by NSA Grant
H98230-15-1-0013.}
\and
Landon Rabern\thanks{
Franklin \& Marshall College, Lancaster, PA;
\texttt{landon.rabern@gmail.com}}
}

\begin{document}
\maketitle
\begin{abstract}
In 2011, the second author conjectured that every line graph $G$ satisfies
$\chi(G)\le \max\set{\omega(G),\frac{5\Delta(G)+8}{6}}$. This conjecture is best
possible, as shown by replacing each edge in a 5-cycle by $k$ parallel edges,
and taking the line graph. In this paper we prove the conjecture.
We also develop more general techniques and results that will likely be of
independent interest, due to their use in attacking the Goldberg--Seymour
conjecture.  
\end{abstract}

\section{Overview}

By \emph{graph} we mean multigraph without loops.  Our notation follows Diestel
\cite{diestel2010}\footnote{In particular, $\card{G}$ denotes $\card{V(G)}$ and
$\size{G}$ denotes $\card{E(G)}$.}.
In \cite{rabern2011strengthening}, the second author showed that $\chi(G)\le
\max\set{\omega(G), \frac{7\Delta(G)+10}{8}}$ for every line graph $G$.
In the same paper, he conjectured
that $\chi(G)\le \max\set{\omega(G),\frac{5\Delta(G)+8}{6}}$. This conjecture is best
possible, as shown by replacing each edge in a 5-cycle by $k$ parallel edges,
and taking the line graph. In this paper we prove the latter inequality.  Along
the way, we develop more general techniques and results that will likely be of
independent interest.  The main result of this paper is the following theorem.
\begin{main}[$\frac56$-Theorem]
If $Q$ is a line graph, then 
\[\chi(Q)\le \max\set{\omega(Q),\frac{5\Delta(Q)+8}{6}}.\]
\end{main}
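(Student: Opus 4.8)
The plan is to argue by contradiction. Suppose the theorem fails and let $Q$ be a counterexample with $\card{Q}$ minimum. Since line graphs are closed under vertex deletion and both $\omega$ and $\Delta$ are monotone under taking induced subgraphs, $Q$ must be vertex-critical: $\chi(Q-w)<\chi(Q)$ for every vertex $w$. Write $Q = L(G)$ for a multigraph $G$; then deleting a vertex of $Q$ is deleting an edge of $G$, so $G$ is edge-critical with $\chi'(G)=\chi(Q)=:k+1$. Now I would read off the two numerical constraints that "$Q$ is a counterexample'' imposes. First, $k+1=\chi'(G)>\omega(Q)\ge\Delta(G)$, so $\Delta(G)\le k$: $G$ is a critical graph whose chromatic index strictly exceeds its maximum degree. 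Second, $k+1>\frac{5\Delta(L(G))+8}{6}$, and since $\Delta(L(G))$ is governed by the edge degree-sums $d_G(u)+d_G(v)$ (up to a correction by $\mu_G(uv)$), this says that once the relevant multiplicities are small, every edge $uv$ of $G$ satisfies $d_G(u)+d_G(v)<\frac{6k+8}{5}$, roughly $1.2\,k$. Third, $k+1>\omega(Q)$ also caps the combined multiplicity of the three sides of every triangle of $G$ at $k$, which removes the Shannon-type obstructions. The goal then becomes: exhibit a proper $k$-edge-coloring of $G$, contradicting criticality.

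Next I would bring in the fan machinery. Fix an uncolored edge $xy$ and a $k$-edge-coloring $\varphi$ of $G-xy$, and take a maximal fan $F$ at $x$ rooted along $xy$. Two results carry the load. The Parallel Edge Lemma lets me reduce to the case where the edges participating in $F$ have small multiplicity, so that the bound $d_G(u)+d_G(v)<\frac{6k+8}{5}$ genuinely applies along $F$. Tashkinov's Lemma guarantees that $F$ is \emph{elementary}: no color is absent at two distinct vertices of $F$. Elementarity converts into a counting inequality: the sets of colors absent at the vertices of $F$, together with the two extra absent colors at $x$ and at $y$ arising because $xy$ is uncolored, are pairwise disjoint and live among $k$ colors, so if $F$ has $n$ vertices then $\sum_{v\in V(F)}d_G(v)\ge(n-1)k+2$. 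On the other hand, summing the constraint $d_G(u)+d_G(v)<\frac{6k+8}{5}$ over the $n-1$ edges of the path-like fan $F$ gives an upper bound on $\sum_{v}d_F(v)\,d_G(v)=\sum_{uv\in E(F)}(d_G(u)+d_G(v))$. Comparing the two inequalities (using $\Delta(G)\le k$ at the two ends of $F$) forces $n-1<\tfrac{10}{4}$, hence $n\le 3$. The arithmetic is rigged so that the $\frac56$ threshold is exactly the point at which every elementary fan is \emph{short}, on at most three vertices.

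The endgame is the analysis of short fans, which I expect is the real work and the place where Theorem~11 enters. Having shown that no uncolored edge of $G$ admits a long elementary fan, I would argue that the surviving local configurations are too rigid to be critical: a finite case analysis of how the few absent colors are distributed over a short fan, together with Kempe $(\alpha,\beta)$-chain surgery, should in every case either free up a color to complete the coloring of $xy$ — contradicting criticality — or expose a triangle of $G$ whose three sides have combined multiplicity exceeding $k$, that is, $\omega(Q)\ge k+1$, contradicting the cap on triangle multiplicities. Tightness is visible in the blown-up $5$-cycle: there the short fans at each uncolored edge \emph{just barely} fail to close up, and the color count lands exactly on $\frac{5\Delta+8}{6}$; in a genuine counterexample there is one extra unit of slack, and that is precisely what the Kempe surgery exploits.

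The main obstacle is this last step. The counting that forces fans to be short is robust and essentially mechanical, but converting "every fan is short'' into "$G$ is $k$-edge-colorable or $\omega(Q)>k$'' requires an exact understanding of which short fans can occur in a critical graph and how their absent-color sets interact — the delicate points being vertices of small degree near the extremal example, triangles (where the $\omega$ term must be invoked), and the need to preserve the hypotheses of the Parallel Edge Lemma throughout the sequence of recolorings. I expect the bulk of the paper's technical effort, and the "more general techniques'' advertised in the abstract, to live exactly here.
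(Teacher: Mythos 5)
Your opening moves (minimal counterexample, vertex-criticality of $Q$, edge-criticality of $G$, $\Delta(G)\le k$, the edge-degree cap coming from $k+1>\frac{5\Delta(Q)+8}{6}$) match the paper, and your fan-counting is in the spirit of Claim~1 in the proof of the Weak $\frac56$-Theorem. But already there the conclusion ``$n\le 3$'' overreaches: the elementary inequality combined with the edge-degree cap only yields a degree bound of roughly $\frac34\Delta(G)$ on the root of a long fan, not an outright exclusion of long fans, which is why the paper must still run a case analysis on the number of \emph{long vertices} in a maximum Tashkinov tree (not merely a fan), supported by the $k$-thin machinery of Lemmas~\ref{NonSpecialsInThinAreAtEvenDistance}--\ref{MasterHelper} and the multiplicity dichotomy of Theorem~\ref{CriticalMuBound}.

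The genuine gap is your endgame. The proposed dichotomy --- ``Kempe surgery frees a color for $xy$, or a triangle of combined multiplicity exceeding $k$ appears'' --- cannot work, because it conflates $\omega(Q)$ with the density parameter $\W(G)$. When every fan (indeed every maximum Tashkinov tree) is short, the conclusion is that $G$ is \emph{elementary}, i.e.\ $\chi'(G)=\W(G)$; and $\W(G)$ is attained on odd subgraphs such as $5$-cycles, not only on stars and triangles, so it can strictly exceed both $\omega(Q)$ and $\Delta(G)+1$. The thickened $5$-cycle is exactly such a graph: it is critical, every fan is short, $\omega(Q)<\chi(Q)$, and no recoloring completes the coloring --- your reading of it as ``just barely failing to close up'' is the opposite of what happens. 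So the case $\chi(Q)=\W(G)>\max\{\omega(Q),\Delta(G)+1\}$ must be resolved by a separate structural argument, and this is where the remaining bulk of the paper lies: Lemmas~\ref{Slacked}--\ref{DegreeBoundedForMiddling} and Corollary~\ref{CriticalElementary56} show that in this case at most one vertex of $G$ has three or more distinct neighbors, hence the underlying simple graph is a cycle, and the bound $\W(G)\le\ceil{\frac{5\Delta(Q)+3}{6}}$ is then verified for thickened odd cycles by direct computation. Your proposal contains no mechanism for this step. (You also omit the complementary reduction for $\chi(Q)\le\Delta(G)+1$, Corollary~\ref{mainCorHelper}, which rests on multiplicity bounds rather than on fans.)
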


For every graph $G$, we have $\chi'(G)\ge
\ceil{\frac{\size{G}}{\floor{\frac{\card{G}}{2}}}}$, since in any proper
edge-coloring each color class has size at most
$\floor{\frac{\card{G}}{2}}$.  Likewise, the same bound holds for any subgraph $H$.  
Thus $\chi'(G)\ge \max_{H\subseteq G}\ceil{\frac{\size{H}}{\floor{\frac{\card{H}}{2}}}}$
(where the max is over all subgraphs $H$ with at least two vertices).
For convenience, we let $\W(G)\DefinedAs\max_{H\subseteq
G}\ceil{\frac{\size{H}}{\floor{\frac{\card{H}}{2}}}}$.
Goldberg~\cite{goldberg1973,GoldbergJGT} and
Seymour~\cite{seymour1979a,seymour1979b} each conjectured that this lower bound
holds with equality, whenever $\chi'(G)>\Delta(G)+1$.
\begin{GS}
Every graph $G$ satisfies
\[
\chi'(G)\le\max\{\W(G), \Delta(G)+1\}.
\]
\end{GS}
The Goldberg--Seymour Conjecture is the major open problem in the area of
edge-coloring multigraphs.
Most of our work goes toward proving the following
intermediate result, in Section~\ref{sec:thin}.  This theorem is a weakened
version of both the Goldberg--Seymour Conjecture and our main result, the
$\frac56$-Theorem.
\begin{main2}[Weak $\frac56$-Theorem]
If $Q$ the line graph of a graph $G$, then 
\[\chi(Q)\le \max\set{\W(G),\Delta(G)+1,\frac{5\Delta(Q)+8}{6}}.\]
\end{main2}
Finally, in Section~\ref{sec:final} we show that the Weak $\frac56$-Theorem
does indeed imply the $\frac56$-Theorem.
To conclude, in Section~\ref{sec:strong-reed} we prove strengthenings of Reed's
Conjecture for line graphs that follow from the general lemmas we prove earlier
in the paper.

\section{Tashkinov Trees}
A graph $G$ is \emph{elementary}\aside{elementary graph} if $\chi'(G)=\W(G)$. 
Let $[k]$ denote $\{1,\ldots,k\}$.
For a path or cycle $Q$, let \emph{$\ell(Q)$} denote the length of $Q$.
A graph $G$ is \emph{critical}\aaside{$\ell(Q)$ critical}{.1in} if $\chi'(G-e)
< \chi'(G)$ for all $e \in E(G)$. 
For a graph $G$ and a partial $k$-edge-coloring $\varphi$, for each vertex $v\in
V(G)$, let $\varphi(v)$ denote the set of colors used in $\varphi$ on edges
incident to $v$.  Let $\vphn(v)=[k]\setminus\varphi(v)$.  A color $c$ is
\emph{seen}\aside{seen} by a vertex $v$ if $c\in \varphi(v)$ and $c$ is
\emph{missing}\aside{missing} by $v$
if $c\in\vphn(v)$.
Given a partial $k$-edge-coloring $\varphi$, a set $W\subseteq V(G)$ is
\emph{elementary}\aside{elementary set} with respect to $\varphi$ (henceforth,
\emph{w.r.t.~$\varphi$}) if each color in $[k]$ is
missing at no more than one vertex of $W$.  More formally, $\vphn(u)\cap
\vphn(v)=\emptyset$ for all distinct $u,v\in W$.
A \emph{defective color}\aaside{defective color}{-.1in} for a set $X\subseteq V(G)$
(w.r.t.~$\varphi$) is a color
used on more than one edge from $X$ to $V(G) \setminus X$.  
A set $X$ is \emph{strongly closed}\aside{strongly closed} 
w.r.t.~$\varphi$ if $X$ has no defective color.
Elementary and strongly closed sets are of particular interest because of the
following theorem, proved implicitly by Andersen~\cite{andersen1977edge} and
Goldberg~\cite{GoldbergJGT}; see also~\cite[Theorem 1.4]{SSTF}.
%

\begin{thm}
\label{elementary}
Let $G$ be a graph with $\chi'(G)=k+1$ for some integer $k\ge \Delta(G)$.  If
$G$ is critical, then $G$ is elementary if and only if there exists $uv\in E(G)$,
a $k$-edge-coloring $\vph$ of $G-uv$, and a set $X$ with $u,v\in X$ such
that $X$ is both elementary and strongly closed w.r.t.~$\varphi$.
\end{thm}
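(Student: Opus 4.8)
The plan is to prove the equivalence in both directions. The ``only if'' direction and the odd case of the ``if'' direction are short counting arguments; the even case of the ``if'' direction carries essentially all of the difficulty.

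\emph{The ``only if'' direction.} Assume $G$ is elementary, so $\W(G)=\chi'(G)=k+1$. First I would show that $G$ itself is the extremal subgraph. Let $H\subseteq G$ with $\card{H}\ge 2$ maximize $\ceil{\size{H}/\floor{\card{H}/2}}$, so this maximum equals $\W(G)=k+1$. If $V(H)\ne V(G)$ then, since a critical graph is connected, some edge $e$ has an end outside $V(H)$; then $H\subseteq G-e$, so $\chi'(G-e)\ge\W(G-e)\ge k+1=\chi'(G)$, contradicting criticality. Hence $V(H)=V(G)$ and $\W(G)=\ceil{\size{G}/\floor{\card{G}/2}}=k+1$. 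Next I would show $\card{G}$ is odd: otherwise write $\card{G}=2s$, so $\size{G}>ks$; a $k$-edge-coloring of $G-e$ (which exists by criticality) then partitions the $\size{G}-1\ge ks$ edges of $G-e$ into $k$ matchings of size at most $s$, forcing each color class to be a perfect matching of $G-e$; but then $G-e$ is $k$-regular and the two ends of $e$ have degree $k+1>k\ge\Delta(G)$ in $G$, a contradiction. So $\card{G}=2s+1$, and combining $\size{G}>ks$ with $\size{G}-1\le ks$ (from a $k$-edge-coloring of $G-e$) gives $\size{G}=ks+1$. Finally, fix any $uv\in E(G)$ and a $k$-edge-coloring $\vph$ of $G-uv$: its $ks$ edges split into $k$ matchings of size at most $s$, hence each of size exactly $s$, so every color is missing at exactly one vertex of $G$. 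Thus $X:=V(G)$ is elementary w.r.t.~$\vph$, it is vacuously strongly closed, and $u,v\in X$.

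\emph{The ``if'' direction, main computation.} Conversely, suppose $uv\in E(G)$, $\vph$ is a $k$-edge-coloring of $G-uv$, and $X\ni u,v$ is elementary and strongly closed w.r.t.~$\vph$; the goal is $\W(G)\ge k+1$. For each color $c$ let $m_c$ count the $c$-edges with both ends in $X$ and $\ell_c$ the $c$-edges from $X$ to $V(G)\setminus X$, so color $c$ covers exactly $2m_c+\ell_c$ vertices of $X$. This number is at most $\card{X}$ trivially and at least $\card{X}-1$ because $X$ is elementary, while $\ell_c\le 1$ because $X$ is strongly closed. If $\card{X}$ is odd, these two facts together with the parity of $2m_c$ force $m_c=\floor{\card{X}/2}$ for every color; summing over the $k$ colors, $G-uv$ has exactly $k\floor{\card{X}/2}$ edges inside $X$, so $G[X]$ (which also contains the edge $uv$) satisfies $\size{G[X]}=k\floor{\card{X}/2}+1$ and hence $\ceil{\size{G[X]}/\floor{\card{X}/2}}=k+1$ (using $\floor{\card{X}/2}\ge 1$). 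Therefore $\W(G)\ge k+1$, and since $\chi'(G)\ge\W(G)$ always and $\chi'(G)=k+1$, we get $\chi'(G)=\W(G)$; that is, $G$ is elementary.

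\emph{The main obstacle: $\card{X}$ even.} When $\card{X}$ is even the same computation yields $m_c=\card{X}/2$ for colors with $\ell_c=0$ but $m_c=\card{X}/2-1$ for colors with $\ell_c=1$; moreover a short count shows that the number of edges leaving $X$ equals the total deficiency $\sum_{w\in X}\card{\vphn(w)}$, which is at least $\card{\vphn(u)}+\card{\vphn(v)}\ge 2$ since $\deg_{G-uv}(u),\deg_{G-uv}(v)\le\Delta(G)-1\le k-1$. Thus $\size{G[X]}=k\floor{\card{X}/2}-\sum_{w\in X}\card{\vphn(w)}+1<k\floor{\card{X}/2}$, so $G[X]$ no longer certifies $\W(G)\ge k+1$, and $X$ must first be enlarged to an odd elementary strongly closed set. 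I would set this up by fixing the edge $uv$ and choosing, among all $k$-edge-colorings $\psi$ of $G-uv$ and all elementary strongly closed sets containing $u$ and $v$ w.r.t.~$\psi$, a pair $(\psi,X)$ with $\card{X}$ maximum --- at least one such pair exists by hypothesis --- and then arguing that $\card{X}$ cannot be even. If $\card{X}$ were even, one would start from an edge $yz$ leaving $X$ whose color is missing at a vertex of $X$ and perform Kempe-chain recolorings, in the spirit of Tashkinov trees, to obtain a coloring $\psi'$ of $G-uv$ and an elementary strongly closed set $X'\supsetneq X$ still containing $u$ and $v$, contradicting maximality. Showing that such a recoloring always exists and preserves both elementarity and strong closedness is the technical heart of the theorem; once $\card{X}$ is odd, the computation above finishes the proof.
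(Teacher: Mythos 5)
The paper does not actually prove this theorem; it is quoted from Andersen and Goldberg (see Theorem~1.4 of \cite{SSTF}), so yours is a from-scratch reconstruction rather than a parallel of an argument in the text. Your ``only if'' direction is correct and complete, and your counting argument for the ``if'' direction when $\card{X}$ is odd is also correct.

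The gap is exactly where you locate it: for $\card{X}$ even you assert that $X$ can be enlarged to a larger elementary, strongly closed set by ``Kempe-chain recolorings in the spirit of Tashkinov trees,'' but you prove nothing. Growing an elementary set while preserving elementarity and closure is not a routine step --- it is essentially the content of Tashkinov's theorem --- so as written the ``if'' direction is unproved. The idea you are missing is that the even case should be shown to be \emph{vacuous}, not conquered. In the standard formulation (and in the cited source), ``strongly closed'' is stronger than the definition given in this paper: it additionally requires $X$ to be \emph{closed}, meaning no color missing at a vertex of $X$ appears on an edge leaving $X$. Under that definition your own computation finishes the proof: for $\alpha\in\vphn(\hspace{1pt}u)$, closedness gives $\ell_\alpha=0$, elementarity gives that $\alpha$ is missing at exactly one vertex of $X$ (namely $u$), so the $\alpha$-colored edges inside $X$ form a matching saturating $X\setminus\set{u}$ and $2m_\alpha=\card{X}-1$, forcing $\card{X}$ odd. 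With the paper's literal, weaker definition (only ``no defective color''), an even $X$ is not excluded by any counting you have done --- your own analysis shows that then every missing color has exactly one boundary edge and no parity contradiction appears --- so either the hypothesis must be read in the stronger standard sense (in which case your odd-case computation is the whole proof and the Tashkinov-style enlargement is unnecessary), or a substantial additional argument for even $X$ must be supplied. As submitted, the proof is incomplete at precisely the point you flag as its technical heart.
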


A \emph{Tashkinov tree}\aside{Tashkinov tree} w.r.t.~$\varphi$ is a sequence
$v_0, e_1, v_1,
e_2,\ldots, v_{t-1},e_t,v_t$ such that all $v_i$ are distinct, $e_i=v_jv_i$ and
$\vph(e_i)\in \vphn(v_\ell)$ for some $j$ and $\ell$ with $0\le j< i$ and $0\le
\ell < i$.  
A \emph{Vizing fan}\aside{Vizing fan} (or simply \emph{fan}) is a Tashkinov tree that induces a
star.  Tashkinov trees are of interest because of the following lemma. 

\begin{lemmaA}
Let $G$ be a graph with $\chi'(G)=k+1$, for some integer $k\ge \Delta(G)+1$ and
choose $e\in E(G)$ such that $\chi'(G-e)<\chi'(G)$.  Let $\varphi$ be a
$k$-edge-coloring of $G-e$.  If $T$ is a Tashkinov tree w.r.t.~$\varphi$ and
$e$, then $V(T)$ is elementary w.r.t.~$\varphi$.
\end{lemmaA}

In view of Theorem~1 and Tashkinov's Lemma, to prove that a graph $G$ is elementary,
it suffices to find an edge $e$, a $k$-edge-coloring $\vph$ of $G-e$, and a
Tashkinov tree $T$ containing $e$ such that $V(T)$ is strongly closed.
This motivates our next two lemmas.  But first, we need a few more definitions.

Let $t(G)$\aside{$t(G)$} be the maximum number of vertices in a Tashkinov tree
over all $e \in E(G)$
and all $k$-edge-colorings $\vph$ of $G - e$.  Let $\T(G)$\aside{$\T(G)$} be
the set of all triples $(T,e,\vph)$ such that $e \in E(G)$, $\vph$ is a
$k$-edge-coloring of $G-e$, and $T$ is a Tashkinov tree with respect to $e$ and
$\vph$ with $\card{T} = t(G)$.  Notice that, by definition, we have $\T(G) \ne \emptyset$.
For a $k$-edge-coloring $\vph$ of $G-e$, a maximal Tashkinov tree
starting with $e$ may not be unique.  However, if $T_1$ and $T_2$ are both such
trees, then it is easy to show that $V(T_1)\subseteq V(T_2)$; by symmetry, also
$V(T_2)\subseteq V(T_1)$, so $V(T_1)=V(T_2)$.
Let $G$ be a critical graph with $\chi'(G) = k+1$ for some integer $k \ge \Delta(G) + 1$. 
Let $\varphi$ be a $k$-edge-coloring of $G - e_0$ for some $e_0 \in E(G)$.  
For $v \in V(G)$ and colors $\alpha, \beta$, let $P_v(\alpha,
\beta)$\aside{$P_v(\alpha,\beta)$} be the
maximal connected subgraph of $G$ that contains $v$ and is induced by edges with color
$\alpha$ or $\beta$.  So $P_v(\alpha, \beta)$ is a path or a cycle.
For a $k$-edge-coloring $\vph$ of $G-v_0v_1$, we often let
$P=P_{v_1}(\alpha,\beta)$ for some $\alpha\in\vphn(v_0)$ and
$\beta\in\vphn(v_1)$.  
Clearly $P$ must end at $v_0$ (or we can swap colors $\alpha$ and $\beta$ on
$P$ and color $v_0v_1$ with $\alpha$), so let $v_1,\ldots,v_r,v_0$ denote the
vertices of $P$ in order. 
To \emph{rotate the $\alpha,\beta$ coloring on $P\cup\{v_0v_1\}$ by
one}\aside{rotate coloring}, we
uncolor $v_1v_2$ and use its color on $v_0v_1$.  To \emph{rotate the
$\alpha,\beta$ coloring on $P\cup\{v_0v_1\}$ by $j$}, we rotate the
$\alpha,\beta$ coloring by one $j$ times in succession.
(When we do not specify $j$, we allow $j$ to take any value from $1$ to
$r$.)  


\begin{lem}
\label{FreeColorsLemma}
Let $G$ be a non-elementary critical graph with $\chi'(G) = k+1$ for some integer
$k \ge \Delta(G) + 1$.  For every $v_0v_1 \in E(G)$, $k$-edge-coloring $\vph$
of $G-v_0v_1$, $\alpha \in \vphn(v_0)$, and $\beta \in
\vphn(v_1)$, we have $\card{P_{v_1}(\alpha, \beta)} < t(G)$.
\end{lem}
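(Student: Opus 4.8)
The plan is to argue by contradiction. Suppose $\card{P_{v_1}(\alpha,\beta)}\ge t(G)$, and write $P\DefinedAs P_{v_1}(\alpha,\beta)$. As recorded just before the statement, $P$ is a path from $v_1$ to $v_0$ whose edges alternate in color between $\alpha$ and $\beta$; list its vertices in order as $v_1,v_2,\ldots,v_r,v_0$. First I would note that $v_0,\,v_0v_1,\,v_1,\,v_1v_2,\,v_2,\,\ldots,\,v_{r-1}v_r,\,v_r$ is a Tashkinov tree with respect to $v_0v_1$ and $\vph$: the edge $v_1v_2$ has color $\alpha\in\vphn(v_0)$, and for $i\ge 2$ the edge $v_iv_{i+1}$ has color $\alpha$ or $\beta$, lying in $\vphn(v_0)$ or $\vphn(v_1)$ respectively. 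This tree (call it $T$; note it is itself a path) has $r+1=\card{P}$ vertices, so $\card{P}\le t(G)$; together with the assumption, $\card{P}=t(G)$ and $(T,v_0v_1,\vph)\in\T(G)$. Put $X\DefinedAs V(T)=\{v_0,\ldots,v_r\}$. As noted above $\card P\ge 3$, since $\card P=2$ would force $P$ to be the (uncolored) edge $v_0v_1$.

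By Tashkinov's Lemma, $X$ is elementary w.r.t.\ $\vph$. Since $G$ is critical with $\chi'(G)=k+1$ and $k\ge\Delta(G)$, Theorem~\ref{elementary} says that if $X$ were also strongly closed w.r.t.\ $\vph$ then $G$ would be elementary; as $G$ is not elementary, $X$ is not strongly closed, so there is a defective color $c$ for $X$. I claim $c\notin\{\alpha,\beta\}$: because $P$ is the \emph{maximal} connected subgraph through $v_1$ induced by the edges colored $\alpha$ or $\beta$, every $\alpha$- or $\beta$-colored edge incident to a vertex of $X=V(P)$ is an edge of $P$, hence internal to $X$, so neither $\alpha$ nor $\beta$ is used on an edge leaving $X$. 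The same observation shows that within $X$ the color $\alpha$ is missing only at $v_0$ and $\beta$ only at $v_1$, since every other vertex of $P$ meets both an $\alpha$-edge and a $\beta$-edge of $P$. Finally, since $T$ is maximal (it already has $t(G)$ vertices), every edge from $X$ to $V(G)\setminus X$ has a color seen by all of $X$; in particular $c$ is seen by every vertex of $X$.

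To reach a contradiction I would build a Tashkinov tree on $t(G)+1$ vertices. Choose $z\in X\setminus\{v_0,v_1\}$ (nonempty since $\card X=\card P\ge 3$) and a color $\delta\in\vphn(z)$, which exists because $\deg_G(z)\le\Delta(G)\le k-1$; by the previous paragraph $\delta\notin\{\alpha,\beta\}$, and $\delta\ne c$ because $z$ sees $c$. Let $\vph^\ast$ be obtained from $\vph$ by interchanging $c$ and $\delta$ along $P_z(c,\delta)$; this is a path with endpoint $z$ whose other endpoint lies outside $X$ (if it were a vertex $w\in X$, then $w$ would see $c$ and hence miss $\delta$, so $w$ and $z$ would both miss $\delta$, contradicting that $X$ is elementary). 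Then $\vph^\ast$ is a $k$-edge-coloring of $G-v_0v_1$; no edge of $T$ is recolored (its edges are uncolored or colored $\alpha$ or $\beta$, and $c,\delta\notin\{\alpha,\beta\}$); each vertex of $X$ other than $z$ keeps its set of missing colors (a vertex of $X$ lying on $P_z(c,\delta)$ other than $z$ is interior, so still meets both a $c$-edge and a $\delta$-edge), while $z$ now misses $c$. Hence $T$ is still a Tashkinov tree with respect to $v_0v_1$ and $\vph^\ast$, with vertex set $X$. If some $c$-colored edge still leaves $X$ under $\vph^\ast$, then appending its endpoint outside $X$ to $T$ (legitimate because $z\in X$ misses $c$ under $\vph^\ast$) gives a Tashkinov tree with $t(G)+1$ vertices, contradicting the definition of $t(G)$, and we are done.

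The hard part is the last step: a priori the swap could recolor every $c$-edge that leaves $X$ — precisely those among the $\ge 2$ such edges that lie on the chain $P_z(c,\delta)$ — so I must control how $P_z(c,\delta)$ crosses the cut $\bigl(X,V(G)\setminus X\bigr)$. I expect to do this by a crossing/parity analysis of $P_z(c,\delta)$: each time the chain exits $X$ along a $c$-edge it must (because it ends outside $X$ at a vertex different from $z$) later re-enter $X$ or terminate, which pairs up $c$-edges and $\delta$-edges of the chain on the cut and, after swapping, leaves a $c$-edge (or, symmetrically, a newly $c$-colored former $\delta$-edge) leaving $X$. When this analysis forces an awkward configuration I would refine the choice of $(z,\delta)$, and, if needed, first apply a rotation of the $\alpha,\beta$-coloring on $P\cup\{v_0v_1\}$ to relocate the two vertices of $X$ missing $\alpha$ and $\beta$ and thereby enlarge the pool of admissible pairs $(z,\delta)$; should a single swap not suffice, I would iterate, monitoring a monotone quantity such as the number of colors seen by every vertex of $X$ together with the sizes of the color classes on the cut.
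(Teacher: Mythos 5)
Your setup is sound and matches the paper's: $V(P)$ is the vertex set of a maximum Tashkinov tree $T$, it is elementary but (by Theorem~\ref{elementary}) not strongly closed, and the goal is to recolor so that a defective color becomes missing at some vertex of $X=V(T)$ while at least one edge of that color still leaves $X$. But the step you flag as ``the hard part'' is exactly the content of the lemma, and your proposal does not close it. Swapping $c$ and $\delta$ along the entire chain $P_z(c,\delta)$ can fail outright: every cut edge of that chain is colored $c$ (no $\delta$-edge leaves $X$ because $\delta\in\vphn(z)$ and $T$ is maximum), the chain starts in $X$ and ends outside, so it crosses the cut an odd number $m$ of times, and when the defective color $c$ has exactly $m\ge 3$ cut edges, all lying on the chain, the swap turns every $c$-cut-edge into a $\delta$-cut-edge while $z$ now misses $c$ rather than $\delta$. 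You are left with a new defective color ($\delta$) seen by every vertex of $X$ --- precisely the configuration you started from --- and your proposed escapes (refining $(z,\delta)$, rotating, ``iterating while monitoring a monotone quantity'') are not arguments: no monotone quantity is identified and no termination is proved.

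The paper avoids this by never swapping the whole chain. With $Q=P_{v_2}(\tau,\delta)$ (your $P_z(c,\delta)$ with $z=v_2$ and the roles of $c,\delta$ renamed $\delta,\tau$), it takes the \emph{last} vertex $w$ of $Q$ lying in $V(T)$, chooses a second color $\pi\in\vphn(w)\setminus\{\alpha,\beta\}$, and first swaps $\tau$ and $\pi$ on all of $G-V(T)$; this is harmless because neither $\tau$ nor $\pi$ appears on any cut edge (each is missing at a vertex of the maximum tree). After that global swap, the terminal segment $wQz$ is a complete $\delta,\pi$-Kempe chain that crosses the cut exactly once, at the $\delta$-edge leaving $w$. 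Interchanging $\delta$ and $\pi$ on $wQz$ therefore makes $\delta$ missing at $w$ while recoloring only one of the at least two $\delta$-colored cut edges; a surviving $\delta$-cut-edge then extends $T$, the desired contradiction. This two-color device --- localizing the swap to the part of the chain beyond its last return to $V(T)$ so that exactly one cut edge is touched --- is the idea your proof is missing.
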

\begin{proof}
Suppose the lemma is false and choose $v_0v_1 \in E(G)$, a $k$-edge-coloring
$\vph$ of $G-v_0v_1$, $\alpha \in \vphn(v_0)$, and $\beta \in \vphn(v_1)$, such
that $\card{P_{v_1}(\alpha, \beta)} \ge t(G)$.  Let $P = P_{v_1}(\alpha, \beta)$; see
Figure~\ref{fig:FreeColors}.  
Let $(T, v_0v_1, \vph)$ be a Tashkinov tree that
begins with edges $v_0v_1, v_1v_2, \ldots, v_{r-1}v_r$.  Now $V(T)=V(P)$ since
$t(G) \ge \card{T} \ge \card{P} \ge t(G)$.
By hypothesis $G$ is non-elementary, so Theorem~\ref{elementary} implies that
$V(T)$ is not strongly closed; thus, $T$ has a defective color $\delta$ with
respect to $\vph$.  Choose $\tau\in \vphn(v_2)$. Let $Q = P_{v_2}(\tau, \delta)$.
Since $T$ is maximal, $\delta$ is not missing at any vertex of $T$, and
since $V(T)$ is elementary, $\tau$ is not missing at any vertex of $T$ other 
than $v_2$.  As a result, $Q$ ends outside $V(T)$.  Now $Q$ could leave
$V(T)$ and re-enter it repeatedly, but $Q$ ends outside $V(T)$, so there is a
last vertex $w \in V(Q) \cap V(T)$; say $Q$ ends at $z \in V(G)\setminus V(T)$.
 Let $\pi \notin \{\alpha, \beta\}$ be a color missing at $w$.  
Since $\tau\in\vphn(v_2)$ and $\pi\in\vphn(w)$ and $\card{T} = t(G)$, no edge
colored $\tau$ or $\pi$ leaves $V(T)$.  So we can swap $\tau$ and $\pi$ on
every edge in $G - V(T)$ without changing the fact that $T$ is a Tashkinov tree
with $\card{T} = t(G)$.  After swapping $\tau$ and $\pi$, we swap $\delta$ and $\pi$
on the subpath of $Q$ from $w$ to $z$. Since $\pi$ is missing at $w$, the
$\delta-\pi$ path starting at $z$ must end at $w$.  Now $\delta$ is missing at
$w$, but $\delta$ was defective in $\vph$, so some other edge $e$ colored
$\delta$ still leaves $V(T)$. Adding $e$ gets a larger Tashkinov tree, which is
a contradiction.
\end{proof}

\begin{figure}
\centering
\begin{tikzpicture}[scale = 12, line width=.06em]
\tikzstyle{VertexStyle} = []
\tikzstyle{EdgeStyle} = []
\tikzstyle{edgeLeft} = [style={bend left=10}]
\tikzstyle{edgeRight} = [style={bend right=10}]
\tikzstyle{labeledStyle}=[shape = circle, minimum size = 6pt, inner sep = 1.2pt]
\tikzstyle{labeledStyle2}=[shape = circle, minimum size = 6pt, inner sep =
1.2pt, draw]
\tikzstyle{unlabeledStyle}=[shape = circle, minimum size = 6pt, inner sep = 1.2pt, draw, fill]
\draw (0,.5) circle (.20cm);
\Vertex[style = labeledStyle2, x = -0.035, y = 0.333, L = \small {$\alpha$}]{v0}
\Vertex[style = labeledStyle2, x = 0.035, y = 0.338, L = \small {$\beta$}]{v1}
\Vertex[style = unlabeledStyle, x = -.035, y = 0.303, L = \tiny {}]{v2}
\Vertex[style = labeledStyle, x = -0.035, y = 0.275, L = \small {$v_0$}]{v3}
\Vertex[style = unlabeledStyle, x = 0.035, y = 0.303, L = \tiny {}]{v4}
\Vertex[style = labeledStyle, x = 0.035, y = 0.275, L = \small {$v_1$}]{v5}
\Vertex[style = unlabeledStyle, x = 0.0995, y = 0.325, L = \small {}]{v6}
\Vertex[style = labeledStyle, x = 0.10000, y = 0.295, L = \small {$v_2$}]{v7}
\Vertex[style = labeledStyle2, x = 0.10000, y = 0.355, L = \small {$\tau$}]{v8}
\draw[line width=.06em] plot [smooth] coordinates {(.1925,.45) (.33,.42)
(.36,.46) (.36,.54) (.33,.58) (.1925,.55)}; 
\Vertex[style = labeledStyle, x = 0.335, y = 0.5, L = \small {$\tau,\delta$}]{v9}
\Vertex[style = labeledStyle, x = 0.22, y = 0.425, L = \small {$\delta$}]{v10}
\Vertex[style = labeledStyle, x = 0.22, y = 0.575, L = \small {$\delta$}]{v11}
\Vertex[style = unlabeledStyle, x = 0.1925, y = 0.45, L = \small {}]{v12}
\Vertex[style = unlabeledStyle, x = 0.1925, y = 0.55, L = \small {}]{v13}
\draw[line width=.06em] plot [smooth] coordinates {
(-.1925,.55) (-.33,.58) (-.36,.54) (-.36,.46) (-.33,.42) (-.26,.4325)}; 
\Vertex[style = labeledStyle, x = -0.335, y = 0.5, L = \small
{$\tau,\delta$}]{v14}
\Vertex[style = unlabeledStyle, x = -.26, y = 0.4325, L = \small {}]{v15}
\Vertex[style = labeledStyle, x = -.26, y = 0.405, L = \small {$z$}]{v15b}
\Vertex[style = unlabeledStyle, x = -0.1925, y = 0.55, L = \small {}]{v16}
\Vertex[style = labeledStyle, x = -0.22, y = 0.575, L = \small {$\delta$}]{v17}
\Vertex[style = labeledStyle, x = -0.2125, y = 0.535, L = \small {$w$}]{v18}
\Vertex[style = labeledStyle2, x = -0.163, y = 0.55, L = \small {$\pi$}]{v19}
\end{tikzpicture}
\caption{$Q\cup(P_{v_1}(\alpha,\beta)+v_0v_1)$ in the proof of
Lemma~\ref{FreeColorsLemma}. 
By recoloring some of $wQz$, we form a larger Tashkinov tree.
Colors that are circled are missing at the indicated vertex.
\label{fig:FreeColors}}
\end{figure}
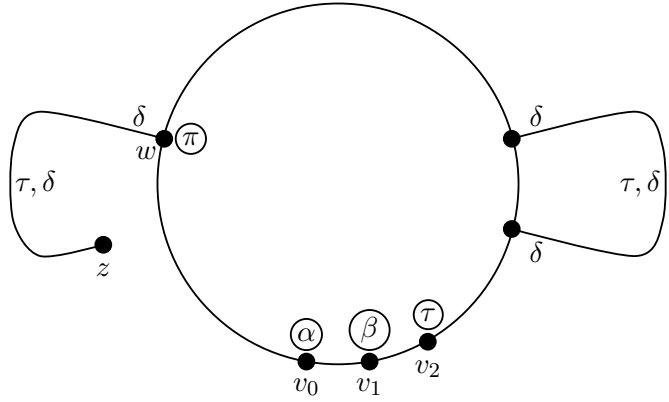

\section{Short vertices and long vertices}
\label{sec:short}
A vertex $v \in V(G)$ is \emph{short}\aaside{short vertex}{-.0in} if every
Vizing fan rooted at $v$ (taken over all $k$-colorings of $G-e$, over all edges
$e$ incident to $v$) has at most 3 vertices, including $v$.  Otherwise, $v$ is
\emph{long}\aaside{long vertex}{-.1in}.  Let $\nu(T)$\aaside{$\nu(T)$}{.1in} be the
number of long vertices in a Tashkinov tree $T$.

Now we can outline our proof of the $\frac56$-Conjecture.  We will show in
Section~\ref{sec:final} (and at the end of Section~\ref{sec:thin})
that the $\frac56$-Conjecture is implied
by the Goldberg--Seymour Conjecture.  More precisely, if $Q$ is the line graph
of graph $G$ and
$\chi(Q)=\chi'(G)\le\max\set{\W(G),\Delta(G)+1}$, then also $\chi(Q)\le
\max\{\omega(Q),\frac{5\Delta(Q)+8}6\}$.  So here it suffices to prove the bound
$\chi'(G)\le\max\set{\W(G), \Delta(G)+1, \frac{5\Delta(Q)+8}6}$.  We consider
cases based on $\nu(T)$, for some Tashkinov tree $T\in \T(G)$.

In the present section, we show that if $G$ has a maximum Tashkinov tree $T$ that contains
no long vertices, i.e., $\nu(T)=0$, then $G$ is elementary.  In fact, Lemma~7
implies that the same is true when $\nu(T)=1$.  In the proof of
Theorem~\ref{mainhelper}, we show that if $G$ is a minimal counterexample to
the $\frac56$-Conjecture, then every long vertex $v$ has
$d(v)<\frac34\Delta(G)$.  This implies that $\nu(T)< 4$, since otherwise
the number of colors missing at vertices of $T$ is more than
$4(k-\frac34\Delta(G))>k$, which contradicts that $V(T)$ is elementary.
So it remains to consider the case $\nu(T)\in\{2,3\}$.

In Section~\ref{sec:thin}, we introduce \emph{$k$-thin graphs};
these are essentially graphs for which $\mu(G)$ is not too large.  Using a lemma
from~\cite{rabern2011strengthening}, we show that every minimal counterexample
to the $\frac56$-Conjecture must be $k$-thin.  We then extend the ideas of the
present section to handle the case when $\nu(T)\in\{2,3\}$.  Much like when
$\nu(T)\ge 4$, we show that $T$ has too many colors missing at its vertices to
be elementary. 

Short vertices were introduced in~\cite{CKPS}, where they were motivated by a
version of the following lemma in the context of proving a strengthening of
Reed's Conjecture for line graphs.

\begin{figure}
\centering
\begin{tikzpicture}[scale = 12, line width=.06em]
\tikzstyle{VertexStyle} = []
\tikzstyle{EdgeStyle} = []
\tikzstyle{edgeLeft} = [style={bend left=10}]
\tikzstyle{edgeRight} = [style={bend right=10}]
\tikzstyle{labeledStyle}=[shape = circle, minimum size = 6pt, inner sep = 1.2pt]
\tikzstyle{labeledStyle2}=[shape = circle, minimum size = 6pt, inner sep =
1.2pt, draw]
\tikzstyle{unlabeledStyle}=[shape = circle, minimum size = 6pt, inner sep = 1.2pt, draw, fill]
\Vertex[style = labeledStyle, x = 0.150, y = 0.850, L=\Large{$\vph:$}]{vphi}
\Vertex[style = unlabeledStyle, x = 0.250, y = 0.850, L = \tiny {}]{v0}
\Vertex[style = unlabeledStyle, x = 0.350, y = 0.850, L = \tiny {}]{v1}
\Vertex[style = unlabeledStyle, x = 0.450, y = 0.850, L = \tiny {}]{v2}
\Vertex[style = unlabeledStyle, x = 0.550, y = 0.850, L = \tiny {}]{v3}
\Vertex[style = unlabeledStyle, x = 0.650, y = 0.850, L = \tiny {}]{v4}
\Vertex[style = unlabeledStyle, x = 0.750, y = 0.850, L = \tiny {}]{v5}
\Vertex[style = unlabeledStyle, x = 0.850, y = 0.850, L = \tiny {}]{v6}
\Vertex[style = unlabeledStyle, x = 0.950, y = 0.850, L = \tiny {}]{v7}
\Vertex[style = unlabeledStyle, x = 1.050, y = 0.850, L = \tiny {}]{v8}
\Vertex[style = labeledStyle, x = 0.250, y = 0.895, L = \small {$v_0$}]{v9}
\Vertex[style = labeledStyle, x = 0.350, y = 0.895, L = \small {$v_1$}]{v10}
\Vertex[style = labeledStyle, x = 1.050, y = 0.895, L = \small {$v_r$}]{v16}
\Vertex[style = labeledStyle2, x = 0.250, y = 0.800, L = \small {$\alpha, \tau$}]{v17}
\Vertex[style = labeledStyle2, x = 0.350, y = 0.800, L = \small {$\beta$}]{v18}
\Edge[label = \small {}, labelstyle={auto=right, fill=none}](v0)(v1)
\Edge[style = edgeLeft, label = \small {$\tau$}, labelstyle={auto=left}](v2)(v1)
\Edge[style = edgeRight, label = \small {$\alpha$}, labelstyle={auto=right}](v2)(v1)
\Edge[label = \small {$\beta$}, labelstyle={auto=left, fill=none}](v2)(v3)
\Edge[style = edgeLeft, label = \small {$\tau$}, labelstyle={auto=left}](v4)(v3)
\Edge[style = edgeRight, label = \small {$\alpha$}, labelstyle={auto=right}](v4)(v3)
\Edge[label = \small {$\cdots$}, labelstyle={auto=left, fill=none}](v4)(v5)
\Edge[style = edgeLeft, label = \small {$\tau$}, labelstyle={auto=left}](v6)(v5)
\Edge[style = edgeRight, label = \small {$\alpha$}, labelstyle={auto=right}](v6)(v5)
\Edge[label = \small {$\beta$}, labelstyle={auto=left, fill=none}](v6)(v7)
\Edge[label = \small {$\alpha$}, labelstyle={auto=right, fill=none}](v8)(v7)

\begin{scope}[yshift=-.06in]
\Vertex[style = labeledStyle, x = 0.150, y = 0.850, L=\Large{$\vph':$}]{vphi}
\Vertex[style = unlabeledStyle, x = 0.250, y = 0.850, L = \tiny {}]{v0}
\Vertex[style = unlabeledStyle, x = 0.350, y = 0.850, L = \tiny {}]{v1}
\Vertex[style = unlabeledStyle, x = 0.450, y = 0.850, L = \tiny {}]{v2}
\Vertex[style = unlabeledStyle, x = 0.550, y = 0.850, L = \tiny {}]{v3}
\Vertex[style = unlabeledStyle, x = 0.650, y = 0.850, L = \tiny {}]{v4}
\Vertex[style = unlabeledStyle, x = 0.750, y = 0.850, L = \tiny {}]{v5}
\Vertex[style = unlabeledStyle, x = 0.850, y = 0.850, L = \tiny {}]{v6}
\Vertex[style = unlabeledStyle, x = 0.950, y = 0.850, L = \tiny {}]{v7}
\Vertex[style = unlabeledStyle, x = 1.050, y = 0.850, L = \tiny {}]{v8}
\Edge[label = \small {$\alpha$}, labelstyle={auto=left, fill=none}](v0)(v1)
\Edge[style = edgeLeft, label = \small {$\tau$}, labelstyle={auto=left}](v2)(v1)
\Edge[style = edgeRight, label = \small {$\beta$}, labelstyle={auto=right}](v2)(v1)
\Edge[label = \small {$\alpha$}, labelstyle={auto=left, fill=none}](v2)(v3)
\Edge[style = edgeLeft, label = \small {$\tau$}, labelstyle={auto=left}](v4)(v3)
\Edge[style = edgeRight, label = \small {$\beta$}, labelstyle={auto=right}](v4)(v3)
\Edge[label = \small {$\cdots$}, labelstyle={auto=left, fill=none}](v4)(v5)
\Edge[style = edgeLeft, label = \small {$\tau$}, labelstyle={auto=left}](v6)(v5)
\Edge[style = edgeRight, label = \small {$\beta$}, labelstyle={auto=right}](v6)(v5)
\Edge[label = \small {}, labelstyle={auto=left, fill=none}](v6)(v7)
\Edge[label = \small {$\alpha$}, labelstyle={auto=right, fill=none}](v8)(v7)
\Vertex[style = labeledStyle2, x = 0.850, y = 0.800, L = \small {$\alpha, \gamma$}]{v17}
\Edge[style = edgeLeft, label = \small {$\gamma$}, labelstyle={auto=left}](v8)(v7)
\Edge[style = edgeRight, label = \small {$\alpha$}, labelstyle={auto=right}](v8)(v7)
\end{scope}

\tikzstyle{edgeLefter} = [style={bend left=15}]
\tikzstyle{edgeRighter} = [style={bend right=15}]
\begin{scope}[yshift=-.12in]
\Vertex[style = labeledStyle, x = 0.150, y = 0.850, L=\Large{$\vph^*:$}]{vphi}
\Vertex[style = unlabeledStyle, x = 0.250, y = 0.850, L = \tiny {}]{v0}
\Vertex[style = unlabeledStyle, x = 0.350, y = 0.850, L = \tiny {}]{v1}
\Vertex[style = unlabeledStyle, x = 0.450, y = 0.850, L = \tiny {}]{v2}
\Vertex[style = unlabeledStyle, x = 0.550, y = 0.850, L = \tiny {}]{v3}
\Vertex[style = unlabeledStyle, x = 0.650, y = 0.850, L = \tiny {}]{v4}
\Vertex[style = unlabeledStyle, x = 0.750, y = 0.850, L = \tiny {}]{v5}
\Vertex[style = unlabeledStyle, x = 0.850, y = 0.850, L = \tiny {}]{v6}
\Vertex[style = unlabeledStyle, x = 0.950, y = 0.850, L = \tiny {}]{v7}
\Vertex[style = unlabeledStyle, x = 1.050, y = 0.850, L = \tiny {}]{v8}
\Edge[label = \small {$\tau$}, labelstyle={auto=left, fill=none}](v0)(v1)
\Edge[style = edgeLeft, label = \small {$\alpha$}, labelstyle={auto=left}](v2)(v1)
\Edge[style = edgeRight, label = \small {$\beta$}, labelstyle={auto=right}](v2)(v1)
\Edge[label = \small {$\tau$}, labelstyle={auto=left, fill=none}](v2)(v3)
\Edge[style = edgeLeft, label = \small {$\alpha$}, labelstyle={auto=left}](v4)(v3)
\Edge[style = edgeRight, label = \small {$\beta$}, labelstyle={auto=right}](v4)(v3)
\Edge[label = \small {$\cdots$}, labelstyle={auto=left, fill=none}](v4)(v5)
\Edge[style = edgeLeft, label = \small {$\alpha$}, labelstyle={auto=left}](v6)(v5)
\Edge[style = edgeRight, label = \small {$\beta$}, labelstyle={auto=right}](v6)(v5)
\Edge[label = \small {}, labelstyle={auto=left, fill=none}](v6)(v7)
\Vertex[style = labeledStyle2, x = 0.850, y = 0.800, L = \small {$\tau, \gamma$}]{v17}
\Edge[style = edgeLefter, label = \small {$\gamma$}, labelstyle={auto=left}](v8)(v7)
\Edge[](v8)(v7)
\Edge[style = edgeRighter, label = \small {$\alpha, \tau$}, labelstyle={auto=right}](v8)(v7)
\end{scope}

\end{tikzpicture}
\caption{Edge-colorings $\vph$, $\vph'$, and $\vph^*$ in the proof of
the \hyperref[SpecialPath]{Parallel Edge Lemma}.\label{fig:PEL}}
\end{figure}

\begin{PEL}\label{SpecialPath}
Let $G$ be a critical graph with $\chi'(G) = k+1$ for some integer $k \ge
\Delta(G) + 1$.  Let $\vph$ be a $k$-edge-coloring of $G-v_0v_1$.  Choose
$\alpha \in \vphn(v_0)$ and $\beta \in \vphn(v_1)$.  Let $P = v_1v_2\cdots v_r$
be an $\alpha,\beta$ path with edges $e_i = v_iv_{i+1}$ for all $i\in[r-1]$. 
If $v_i$ is short for all odd $i$, then for each $\tau \in \vphn(v_0)$ 
and for all odd $i\in[r-1]$ there are edges $f_i = v_iv_{i+1}$ such that 
$\vph(f_i) = \tau$. 
\end{PEL}
\begin{proof}
Suppose not and choose a counterexample minimizing $r$.  By minimality of
$r$, we have $\vph(v_{r-1}v_r) = \alpha$ and we have $f_i = v_iv_{i+1}$ for
all odd $i\in[r-2]$ such that 
$\vph(f_i) = \tau$; 
see Figure~\ref{fig:PEL}.  Swap $\alpha$ and $\beta$ on $e_i$ for all
$i\in [r-3]$ and then
color $v_0v_1$ (call this edge $e_0$) with $\alpha$ and uncolor $e_{r-2}$.  Let
$\vph'$ be the resulting coloring.  Since $k \ge \Delta(G) + 1$, some color
other than $\alpha$ is missing at $v_{r-2}$; let $\gamma$ be such a color.  Now 
$v_{r-1}$ is short since $r-1$ is odd (since $P$ starts and ends with
$\alpha$), so there is an edge $e = v_{r-1}v_r$ with $\vph'(e) = \gamma$.  
Swap $\tau$ and $\alpha$ on $e_i$ for all $i$ with $0 \le i \le r-3$ to get a
new coloring $\vph^*$.  Now $\gamma$ and $\tau$ are both missing at $v_{r-2}$
in $\vph^*$.  Since $v_{r-1}$ is short, the fan with $v_{r-2}, v_{r-1}, v_r$
and $e$ implies that there is an edge $f_{r-1} = v_{r-1}v_r$ with
$\vph^*(f_{r-1}) = \tau$.  But we have never recolored $f_{r-1}$, so
$\vph(f_{r-1})=\tau$, which is a contradiction.
\end{proof}

\begin{lem}\label{ZeroNonSpecial}
Let $G$ be a non-elementary critical graph with $\chi'(G) = k+1$ for some
integer $k \ge \Delta(G) + 1$.  Choose $(T, v_0v_1, \vph) \in \T(G)$ for some
$v_0v_1 \in E(G)$.  Choose $\alpha \in \vphn(v_0)$ and $\beta \in \vphn(v_1)$ and
let $P = P_{v_1}(\alpha, \beta)$.  Now $P$ contains a long vertex. 
In particular, $\nu(T) \ge 1$.
\label{lem2}
\end{lem}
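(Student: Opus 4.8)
The plan is to derive a contradiction from the assumption that $P = P_{v_1}(\alpha,\beta)$ contains no long vertex, i.e. that every vertex of $P$ is short. Write $P = v_1v_2\cdots v_r v_0$, as set up just before Lemma~\ref{FreeColorsLemma}. The key observation is that $P$ together with the uncolored edge $v_0v_1$ is a cycle in colors $\alpha,\beta$ (plus one uncolored edge), and $\beta \in \vphn(v_1)$, $\alpha \in \vphn(v_0)$, so along $P$ the colors alternate $\beta, \alpha, \beta, \alpha, \ldots$ starting from the edge $v_1v_2$. In particular every second vertex plays the role of a ``short'' vertex that the Parallel Edge Lemma can exploit. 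Since $v_i$ is short for all $i$ (in particular for all odd $i$), the \hyperref[SpecialPath]{Parallel Edge Lemma} applies: taking any $\tau \in \vphn(v_0)$, for every odd $i \in [r-1]$ there is an edge $f_i = v_iv_{i+1}$ with $\vph(f_i) = \tau$.

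Next I would combine this with Lemma~\ref{FreeColorsLemma}. Recall $(T,v_0v_1,\vph) \in \T(G)$, so $\card{T} = t(G)$, and by Lemma~\ref{FreeColorsLemma} we have $\card{P} < t(G) = \card{T}$; so $P$ is a strict subpath-worth of vertices compared to a maximum Tashkinov tree. The idea is to build a larger Tashkinov tree than $T$, contradicting maximality of $t(G)$. Concretely, the chain $v_0, v_0v_1, v_1, v_1v_2, v_2, \ldots, v_r$ following the path $P$ is itself a Tashkinov tree w.r.t.\ $\vph$ and $v_0v_1$ (each edge $v_{i}v_{i+1}$ has color $\alpha$ or $\beta$, one of which is missing at $v_0$ or $v_1$). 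Now pick $\tau \in \vphn(v_0)$ with $\tau \notin \{\alpha,\beta\}$ — such a color exists because $k \ge \Delta(G)+1$ forces $\card{\vphn(v_0)} \ge 2$, and we may assume $\card{\vphn(v_0)} \ge 2$ avoids $\beta$ trivially since $\beta \notin \vphn(v_0)$ (as $V(T)$ elementary and $\beta \in \vphn(v_1)$); if $\card{\vphn(v_0)} = 2$ and its only other element were constrained, a short argument shows we can still choose $\tau \ne \alpha$. By the Parallel Edge Lemma there are parallel edges $f_i$ colored $\tau$ for odd $i$; a $\tau$-colored edge $f_1 = v_1 v_2$ parallel to $e_1$ gives us an edge incident to $v_1$ of color $\tau \in \vphn(v_0)$ that is \emph{not} on the path, hence the endpoint of this edge together with some further Kempe-chain / recoloring move produces a vertex outside $V(P)$ that can be attached.

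More carefully, here is the cleaner route I would actually pursue: since $\tau \in \vphn(v_0)$ and $\tau$ is not missing anywhere else on $V(T) = V(P)$ (by elementariness, once we fix a maximum tree $T$ with $V(T) = V(P)$, which holds because $t(G) \ge \card{T} \ge \card{P}$ would need $\card{P} = t(G)$ — but Lemma~\ref{FreeColorsLemma} says $\card{P} < t(G)$, contradiction already!). So in fact the contradiction is immediate and does not even need the Parallel Edge Lemma in this form: if $P$ has no long vertex, run the above to show $\card{P} \ge t(G)$, contradicting Lemma~\ref{FreeColorsLemma}. The mechanism for $\card{P} \ge t(G)$ is: with all vertices of $P$ short, apply the Parallel Edge Lemma to get the $\tau$-edges $f_i$, and then show that attaching these parallel edges (which reach new vertices, since $G$ is non-elementary so $P$ cannot already be strongly closed) extends the path-Tashkinov-tree on $V(P)$; iterating, one reaches a Tashkinov tree on strictly more than $\card{P}$ vertices is not what we want — rather we argue directly that the maximum Tashkinov tree starting at $v_0v_1$ must \emph{contain} $V(P)$ (follow the alternating path; at $v_r$, the edge $v_{r-1}v_r$ colored $\alpha$ is missing at $v_1 \in V(P)$, so $v_r$ is added), forcing $t(G) \ge \card{P}$, which is the needed contradiction with Lemma~\ref{FreeColorsLemma}.

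The main obstacle, and the step I would spend the most care on, is the very last one: showing that the maximal Tashkinov tree starting with $v_0v_1$ actually absorbs \emph{all} of $V(P)$, so that $t(G) \ge \card{P}$. The subtlety is that to add $v_{i+1}$ to the tree we need the color of $v_iv_{i+1}$ (which is $\alpha$ or $\beta$) to be missing at some already-included vertex; $\beta \in \vphn(v_1)$ handles the $\beta$-edges, but for the $\alpha$-edges we need $\alpha$ missing at some vertex of the partial tree — and $\alpha \in \vphn(v_0)$, with $v_0$ the \emph{last} vertex of $P$, so $\alpha$ is not yet available while we are traversing the interior. This is exactly where shortness and the Parallel Edge Lemma earn their keep: the parallel $\tau$-edges $f_i = v_iv_{i+1}$ for odd $i$ let us grow the tree along $P$ using color $\tau \in \vphn(v_0)$ on those steps, bypassing the $\alpha$ obstruction, and then the $\beta$-edges $v_{2j}v_{2j+1}$ (colored $\beta \in \vphn(v_1)$) handle the even steps — so the whole of $V(P)$ gets pulled into a Tashkinov tree w.r.t.\ $\vph$ and $v_0v_1$, giving $t(G) \ge \card{P}$ and contradicting Lemma~\ref{FreeColorsLemma}. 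Hence $P$ must contain a long vertex, and since $V(P) \subseteq V(T)$ fails in general but any long vertex found on $P$ is a long vertex, and in the intended application $P \subseteq T$ gives $\nu(T) \ge 1$; more precisely one notes $v_1 \in V(T)$ and the long vertex on $P$ can be taken to be $v_1$ or, re-running with the Tashkinov tree $T$ itself in place of the path, a long vertex of $T$, so $\nu(T) \ge 1$.
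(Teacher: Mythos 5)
There is a genuine gap, and it is a sign error in the logic rather than a missing technical step. Your entire argument is aimed at showing that the maximal Tashkinov tree starting with $v_0v_1$ absorbs all of $V(P)$, i.e.\ $V(P)\subseteq V(T)$ and hence $t(G)\ge\card{P}$, and you then declare this to be ``the needed contradiction with Lemma~\ref{FreeColorsLemma}.'' But Lemma~\ref{FreeColorsLemma} asserts $\card{P}<t(G)$, so $t(G)\ge\card{P}$ is \emph{consistent} with it, not contradictory. The containment you are laboring over is essentially free in any case: the sequence $v_0,v_0v_1,v_1,v_1v_2,v_2,\dots$ is already a Tashkinov tree because $v_0$ is in the tree from step zero, so $\alpha\in\vphn(v_0)$ is available for every $\alpha$-edge of $P$ --- your worry that ``$\alpha$ is not yet available while traversing the interior'' confuses the order of $P$ (which ends at $v_0$) with the order of the tree (which begins at $v_0$). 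Relatedly, your claim that the parallel edge $f_1=v_1v_2$ colored $\tau$ ``produces a vertex outside $V(P)$ that can be attached'' is false: $f_1$ joins $v_1$ to $v_2\in V(P)$.

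What the lemma actually requires is the reverse containment: if every vertex of $P$ is short, then $V(T)\subseteq V(P)$, so $\card{P}=t(G)$, which \emph{does} contradict Lemma~\ref{FreeColorsLemma}. This is where the Parallel Edge Lemma earns its keep, and in the opposite way from how you deploy it. For $\tau\in\vphn(v_0)$ it produces, at every vertex of $V(P-v_0)$, a $\tau$-colored edge parallel to an edge of $P$; since in a proper coloring each vertex has at most one $\tau$-edge, no $\tau$-edge leaves $V(P)$. Rotating the $\alpha,\beta$ coloring of $P\cup\set{v_0v_1}$ moves the uncolored edge around the cycle, so the same argument applies to every color missing at any vertex of $P$. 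Thus no color missing on $V(P)$ escapes $V(P)$, the tree-growing process is trapped, and $V(T)=V(P)$. Your proposal never establishes (or even states) this trapping step, so as written it does not prove the lemma.
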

\begin{proof}
Suppose every vertex of $P$ is short.  Applying the
\hyperref[SpecialPath]{Parallel Edge Lemma} to $P$
shows that for every $\tau \in \vphn(v_0)$, there is an edge in $T$ colored
$\tau$ incident to every $v \in V(P - v_0)$.  The same is also true of every 
other color missing at some vertex of $P$; to see this, we rotate the
$\alpha,\beta$ coloring of $P\cup\{v_0v_1\}$ and repeat the same argument. 
Hence $V(P) = V(T)$, which contradicts Lemma \ref{FreeColorsLemma}.
\end{proof}

\begin{thm}
\label{AllSpecialImpliesElementary}
If $G$ is a critical graph in which every vertex is short, then
\[\chi'(G) \le \max \set{\W(G), \Delta(G) + 1}.\]
\end{thm}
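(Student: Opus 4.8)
The plan is to argue by contradiction, with Lemma~\ref{ZeroNonSpecial} doing essentially all the work. Suppose the theorem fails for some critical graph $G$ in which every vertex is short, so that $\chi'(G) > \max\set{\W(G),\Delta(G)+1}$. Write $k+1 \DefinedAs \chi'(G)$. Since $\chi'(G) > \Delta(G)+1$ we have $k \ge \Delta(G)+1$, so the degree hypothesis of Lemma~\ref{ZeroNonSpecial} is met. Since $\chi'(G) > \W(G)$, the graph $G$ is not elementary (by the very definition of an elementary graph). Thus $G$ is a non-elementary critical graph with $\chi'(G) = k+1$ and $k \ge \Delta(G)+1$, exactly the setting of Lemma~\ref{ZeroNonSpecial}. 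Note that Theorem~\ref{elementary} is not needed here: ``non-elementary'' enters only through its definition and through Lemma~\ref{ZeroNonSpecial}.

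Next I would assemble the data required to invoke Lemma~\ref{ZeroNonSpecial}. By definition of $t(G)$, the set $\T(G)$ is nonempty, so fix some $(T, v_0v_1, \vph) \in \T(G)$. Because $\vph$ is a $k$-edge-coloring of $G-v_0v_1$ and $d_{G-v_0v_1}(v_0) = d_G(v_0)-1 \le \Delta(G)-1 < k$, the vertex $v_0$ misses at least one color; choose $\alpha \in \vphn(v_0)$. Symmetrically $d_{G-v_0v_1}(v_1) \le \Delta(G)-1 < k$, so choose $\beta \in \vphn(v_1)$. Set $P = P_{v_1}(\alpha,\beta)$.

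Finally, Lemma~\ref{ZeroNonSpecial} applies directly to $(T,v_0v_1,\vph)$, $\alpha$, $\beta$, and $P$, and concludes that $P$ contains a long vertex of $G$. This contradicts the assumption that every vertex of $G$ is short, which finishes the proof.

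I do not expect a genuine obstacle: the combinatorial substance has already been absorbed into the \hyperref[SpecialPath]{Parallel Edge Lemma}, Lemma~\ref{FreeColorsLemma}, and Lemma~\ref{ZeroNonSpecial}, which together force a long vertex onto the Kempe chain $P_{v_1}(\alpha,\beta)$. The only things needing a little care are the opening bookkeeping — reducing to the case $\chi'(G) \ge \Delta(G)+2$ so that $k \ge \Delta(G)+1$, and reading off non-elementarity of $G$ from $\chi'(G) > \W(G)$ — and verifying that $v_0$ and $v_1$ actually miss colors in $\vph$ so that $\alpha$ and $\beta$ exist.
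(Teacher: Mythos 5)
Your proposal is correct and follows essentially the same route as the paper: reduce to a non-elementary critical graph with $k \ge \Delta(G)+1$ and invoke Lemma~\ref{ZeroNonSpecial} to produce a long vertex, contradicting the hypothesis that every vertex is short. The paper's proof is just a terser contrapositive phrasing of the same argument; your extra bookkeeping (existence of $\alpha$, $\beta$ and nonemptiness of $\T(G)$) is accurate but routine.
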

\begin{proof}
Suppose not and let $G$ be a counterexample. 
Let $k = \chi'(G) - 1$, and note that $k \ge \Delta(G) + 1$.  
Since $\T(G) \ne \emptyset$, by applying Lemma \ref{ZeroNonSpecial} we conclude
that $G$ is elementary.  Hence $\chi'(G) = \W(G)$, which is a
contradiction.
\end{proof}

For a path $Q$, recall that $\ell(Q)$ denotes the length of $Q$.
For $x,y \in V(Q)$, let $xQy$ denote the subpath of $Q$ with
end vertices $x$ and $y$, and let $d_Q(x,y) = \ell(xQy)$, i.e., the distance
from $x$ to $y$ along $Q$.

\begin{lem}\label{TauEscape}
Let $G$ be a critical graph with $\chi'(G) = k+1$ for some integer $k \ge \Delta(G) + 1$.
Let $\vph$ be a $k$-edge-coloring of $G-v_0v_1$. Choose $\alpha \in \vphn(v_0)$
and $\beta \in \vphn(v_1)$ and let $C = P_{v_1}(\alpha, \beta) + v_0v_1$.  If
$\tau \in \vphn(x)$ for some $x \in V(C)$ and there is a $\tau$-colored edge
from $y \in V(C)$ to $w \in V(G) \setminus V(C)$, then $C$ has a subpath $Q$
with long endpoints $z_1,z_2$ such that $x \in V(Q)$, $y \not \in V(Q-z_1-z_2)$
and the distance from $x$ to $z_i$ along $Q$ is odd for each $i \in \irange{2}$. 
Moreover, for each $i \in \irange{2}$, there are no $\tau$-colored edges
between $z_i$ and its neighbors along $C$.
\end{lem}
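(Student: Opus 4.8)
The plan is this: from $x$ I will follow, in each of the two directions around $C$, the ``ghost'' of parallel $\tau$-colored edges that the Parallel Edge Lemma forces as long as the vertices it passes are short, and I will take $z_1$ and $z_2$ to be the vertices at which this process first gets stuck. These come out long, at odd distance from $x$, and with no $\tau$-edge to their $C$-neighbors essentially by construction. A few preliminaries first. Since $x$ misses $\tau$ while $y$ meets a $\tau$-edge, $x\ne y$; also $\tau\notin\{\alpha,\beta\}$, for otherwise the $\tau$-edge $yw$, being incident to $y\in V(P_{v_1}(\alpha,\beta))$, would force $w$ into the same $\alpha,\beta$-component, i.e.\ $w\in V(C)$, a contradiction. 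Hence rotating the $\alpha,\beta$-coloring of $C$ changes neither the set of $\tau$-colored edges, nor (as always) which vertices of $G$ are long, nor the facts that $x$ misses $\tau$ and that $yw$ leaves $V(C)$; and by such rotations we may take the uncolored edge of $C$ to be any prescribed edge of $C$, its endpoints then missing $\alpha$ and $\beta$ in one order or the other. Finally $\card{C}$ is odd, because $P_{v_1}(\alpha,\beta)$ alternates $\alpha,\beta,\alpha,\dots,\beta$ and so has evenly many edges. Let $p,q$ be the two $C$-neighbors of $x$; by symmetry it suffices to build $z_1$ ``on the $q$-side'', and $z_2$ is obtained identically using $p$ (the constructions are compatible since they depend only on data that rotations preserve).

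To build $z_1$, rotate so that $xq$ is uncolored and rename colors so $x$ misses $\alpha$ and $q$ misses $\beta$; let $u_0=q,u_1,\dots,u_m=x$ be the $\alpha,\beta$-path from $q$ (it equals $C-xq$, so $m=\card{C}-1$ is even, and its $\alpha$-colored edges are $u_0u_1,u_2u_3,\dots$). Applying the Parallel Edge Lemma with $v_0:=x$ and $v_1:=q$ to the initial segment $u_0u_1\cdots u_{2t+1}$ shows: if $u_0,u_2,\dots,u_{2t}$ are short, then for every $s\le t$ there is a $\tau$-edge joining $u_{2s}$ and $u_{2s+1}$ (note $\tau\in\vphn(x)=\vphn(v_0)$); and the proof of that lemma gives the sharper step that, if $u_{2s}$ and $u_{2s+1}$ are $\tau$-joined for all $s<t$ and $u_{2t}$ is short, then $u_{2t}$ and $u_{2t+1}$ are $\tau$-joined. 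Now let $T$ be least such that $u_{2T}$ is long and $u_{2T},u_{2T+1}$ are \emph{not} $\tau$-joined. For each $t<T$, either $u_{2t}$ is short or $u_{2t},u_{2t+1}$ are $\tau$-joined, so iterating the sharper step gives that $u_{2t}$ and $u_{2t+1}$ are $\tau$-joined for all $t<T$; in particular every vertex $u_0,\dots,u_{2T-1}$ meets a $\tau$-edge inside $V(C)$. Such a $T$ exists with $2T\le j$, where $y=u_j$: otherwise $\floor{j/2}<T$, so $u_{2\floor{j/2}}$ and $u_{2\floor{j/2}+1}$ would be $\tau$-joined, making $y$ meet a $\tau$-edge inside $V(C)$, which together with $yw$ contradicts that $\vph$ is a proper coloring --- this is the one place the escape hypothesis is used. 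Set $z_1:=u_{2T}$.

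By construction $z_1$ is long and sits at distance $2T+1$ (odd) from $x$ along the route $x,q=u_0,u_1,\dots,u_{2T}$. Its $C$-neighbors are $u_{2T+1}$ and, if $T\ge1$, $u_{2T-1}$; if $T=0$ the second neighbor is $x$, joined to $z_1$ by the uncolored edge $xq$. By the choice of $T$, $u_{2T}$ and $u_{2T+1}$ are not $\tau$-joined; if $T\ge1$ then $u_{2T-1}$ is $\tau$-joined to $u_{2T-2}$, hence by properness not also to $u_{2T}$; and $xq$ is uncolored. So no $\tau$-edge joins $z_1$ to a $C$-neighbor. Carrying out the identical argument through $p$ produces a long $z_2$ at odd distance $2T'+1$ from $x$ along the route through $p$, again not $\tau$-joined to its $C$-neighbors, with $2T'\le j'$ where $y$ is the $j'$-th vertex in the $p$-side labelling. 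Let $Q$ be the subpath of $C$ running from $z_1$ to $z_2$ through $x$ --- via $q$ on the $z_1$-arm, via $p$ on the $z_2$-arm. Because $\card{C}$ is odd while the two labellings of $y$ satisfy $2T\le j$, $2T'\le j'$ and $j+j'=m-1$ (they are related by $u'_i=u_{m-1-i}$), one gets $2T+2T'\le m-2$; this forces $z_1\ne z_2$, makes the two arms of $Q$ share only $x$ (so $Q$ really is a path, with $d_Q(x,z_i)$ odd for each $i$), and places $y$ outside the interior of $Q$, i.e.\ $y\notin V(Q-z_1-z_2)$. The delicate point throughout is the ``moreover'' clause: one must not take $z_i$ to be merely the first long vertex, and it is precisely the choice of $z_i$ at the first stuck step of the Parallel-Edge-Lemma iteration that makes the odd-distance condition and the $\tau$-edge-freeness hold simultaneously.
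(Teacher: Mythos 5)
Your proof is correct and takes essentially the same approach as the paper: walk outward from $x$ in both directions along $C$, use the Parallel Edge Lemma's inductive step to force parallel $\tau$-edges at consecutive odd-distance pairs, and take $z_1,z_2$ to be the first vertices where this process fails, which must therefore be long. You are in fact more explicit than the paper about why the process terminates (the escaping edge at $y$ would otherwise receive a second $\tau$-edge), why $z_1\ne z_2$ and $Q$ is a genuine path, and why $y$ lands outside the interior of $Q$.
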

\begin{proof}
Let $G$, $\alpha$, $\beta$, $\tau$, $x$, and $y$ be as in the statement of the
lemma.  Choose $z_1$ (resp. $z_2$) to be the first vertex at an odd distance
from $x$ along $C$ in the clockwise (resp. counterclockwise) direction with no
incident $\tau$-colored edge parallel to some edge of $C$.  
Let $Q$ be the subpath of $C$ with endpoints $z_1$ and $z_2$ that contains $x$.
By the choice of $z_1$ each vertex $w$ between $x$ and $z_1$ with $d_Q(x,w)$ odd
has a $\tau$-colored edge parallel to some edge of $C$.  The presence of these
edges implies the same for each $w$ for which $d_Q(x,w)$ is even.  By the proof
of 
the \hyperref[SpecialPath]{Parallel Edge Lemma}, 
$z_1$ must be long, since otherwise it would have an
incident $\tau$-colored edge parallel to some edge of $C$.  The same argument
applies to $z_2$.
%
\end{proof}

\section{Thin graphs}
\label{sec:thin}
Let $G$ be a critical graph with $\chi'(G) = k+1$ for some integer $k \ge \Delta(G) + 1$.
For vertices $x \in V(G)$ and $S \subseteq V(G) \setminus \set{x}$, we say that
$x$ is \emph{$S$-short}\aside{$S$-short} if every Vizing fan $F$ rooted at $x$
with $(\{x\}\cup S) \subseteq V(F)$, has $\card{F} \le 3$ (with respect to any
$k$-edge-coloring of $G-xy$).  Otherwise, $x$ is
\emph{$S$-long}\aside{$S$-long}.  For brevity,
when $S = \set{y}$, we may write $y$-short instead of $\set{y}$-short.  It is
worth noting that in the \hyperref[SpecialPath]{Parallel Edge Lemma} we can
weaken the hypothesis that $v_i$ is short for all odd $i$ to require only that
$v_i$ is $v_{i-1}$-short for all odd $i$, since this is what we use in the proof.  

A graph $G$ is \emph{$k$-thin}\aside{$k$-thin} if $\mu(G) < 2k - d(x) - d(y)$
for all distinct long $x,y \in V(G)$.  In the proof of
Theorem~\ref{mainhelper}, we will show that every minimum counterexample to
the $\frac56$-Conjecture must be $k$-thin.

\begin{lem}
\label{NonSpecialsInThinAreAtEvenDistance}
Let $G$ be a $k$-thin, critical graph with $\chi'(G) = k+1$ for some integer $k \ge \Delta(G) + 1$.
Let $\vph$ be a $k$-edge-coloring of $G-v_0v_1$. Choose $\alpha \in \vphn(v_0)$
and $\beta \in \vphn(v_1)$ and let $C = P_{v_1}(\alpha, \beta) + v_0v_1$.
Let $Q$ be a subpath of $C$ with long end vertices.  If all internal vertices
of $Q$ are short and $2 \le \ell(Q) \le \ell(C) - 2$, then $\ell(Q)$ is even.
\end{lem}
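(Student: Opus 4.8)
The plan is to assume $m:=\ell(Q)$ is odd — so $m\ge 3$ — and derive a violation of $k$-thinness. Label $Q$ as $z_1w_1w_2\cdots w_{m-1}z_2$, where $z_1,z_2$ are the long endpoints and $w_1,\dots,w_{m-1}$ are the (short) internal vertices, and let $g_i$ be the $i$th edge of $Q$, so $g_1=z_1w_1$ and $g_m=w_{m-1}z_2$. Two preliminary facts. First, $\ell(C)$ is odd: $P_{v_1}(\alpha,\beta)$ is a path from $v_1$ to $v_0$ whose first edge is colored $\alpha$ (since $\beta\in\vphn(v_1)$) and whose last edge is colored $\beta$ (since $\alpha\in\vphn(v_0)$), hence it has even length and $\ell(C)=\ell(P_{v_1}(\alpha,\beta))+1$ is odd; consequently the two $C$-edges flanking any single uncolored edge always receive different colors. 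Second — the crucial structural input — prepending $v_0$ to $P_{v_1}(\alpha,\beta)$ via the uncolored edge gives a Tashkinov tree with respect to $\vph$ and $v_0v_1$ (every edge is colored $\alpha$, missing at $v_0$, or $\beta$, missing at $v_1$), so Tashkinov's Lemma shows $V(C)$ is elementary; in particular $\vphn(z_1)\cap\vphn(z_2)=\emptyset$. Finally, note that $\{z_1,z_2\}\ne\{v_0,v_1\}$: the two arcs of $C$ between $v_0$ and $v_1$ have lengths $1$ and $\ell(C)-1$, neither of which lies in $[2,\ell(C)-2]$.

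I would then apply the Parallel Edge Lemma from the $z_1$ end. Rotate the $\alpha,\beta$-coloring on $P_{v_1}(\alpha,\beta)\cup\{v_0v_1\}$ so that the uncolored edge becomes $g_1=z_1w_1$, and relabel $\alpha,\beta$ so that $z_1$ misses $\alpha$ in the new coloring; by the parity remark above, $w_1$ then misses $\beta$. Now $w_1w_2\cdots w_{m-1}$ is an $\alpha,\beta$-path all of whose vertices are short, so the Parallel Edge Lemma — with $z_1$ playing the role of ``$v_0$'' and $w_1$ that of ``$v_1$'' — yields, for every color $\tau$ missing at $z_1$ and every odd position along this path, a $\tau$-colored edge parallel to the corresponding edge of $Q$. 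Restricting to $\tau\notin\{\alpha,\beta\}$ (so the conclusion transfers back to $\vph$, since rotations recolor only $\alpha$- and $\beta$-edges), this gives, for every $\tau\in\vphn(z_1)\setminus\{\alpha,\beta\}$, a $\tau$-colored edge joining $w_{i-1}$ and $w_i$ for each even $i$ with $2\le i\le m-1$. Symmetrically, rotating the uncolored edge to $g_m=w_{m-1}z_2$ and running the Parallel Edge Lemma from $z_2$ produces, for every $\tau\in\vphn(z_2)\setminus\{\alpha,\beta\}$, a $\tau$-colored edge joining $w_{j-1}$ and $w_j$ for each even $j$ with $2\le j\le m-1$. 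The key point is that, because $m$ is odd, the $i$th edge of $Q$ counted from $z_1$ is the $(m+1-i)$th counted from $z_2$ and $i\equiv m+1-i\pmod 2$, so the two applications single out exactly the \emph{same} interior edges of $Q$ — and this is where the parity of $\ell(Q)$ is used.

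Now fix one such edge, say $w_1w_2$ (which exists since $m\ge3$). In $\vph$, between $w_1$ and $w_2$ we have the $C$-edge $g_2$ (colored $\alpha$ or $\beta$), a distinct $\tau$-edge for each $\tau\in\vphn(z_1)\setminus\{\alpha,\beta\}$, and a distinct $\tau$-edge for each $\tau\in\vphn(z_2)\setminus\{\alpha,\beta\}$. Since $\vphn(z_1)\cap\vphn(z_2)=\emptyset$, all these edges carry pairwise distinct colors, and $g_2$ is distinct from all the $\tau$-edges. Moreover $\card{\vphn(z_i)\setminus\{\alpha,\beta\}}=k-d(z_i)$ for $i=1,2$ (each $z_i$ either is an internal vertex of the properly $2$-edge-colored path $C\setminus\{\text{uncolored edge}\}$, hence avoids neither $\alpha$ nor $\beta$, or equals $v_0$ or $v_1$, hence misses exactly one of $\alpha,\beta$ while being incident to the uncolored edge). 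Therefore
\[
\mu(G)\ \ge\ (k-d(z_1))+(k-d(z_2))+1\ >\ 2k-d(z_1)-d(z_2),
\]
contradicting the fact that $G$ is $k$-thin and $z_1,z_2$ are distinct long vertices.

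The hard part is the bookkeeping. Each invocation of the Parallel Edge Lemma must be set up on a \emph{rotated} coloring in which the uncolored edge sits at an end edge of $Q$; one must check that after the rotation the relevant endpoint and its neighbor miss the colors the lemma requires (which is exactly where $\ell(C)$ odd enters), and then verify that the sets of ``odd-position'' edges produced by running the lemma from $z_1$ and from $z_2$ coincide (which is exactly where the parity of $\ell(Q)$ enters and forces the contradiction). Keeping track of the two rotated colorings versus the original $\vph$, and confining attention to colors outside $\{\alpha,\beta\}$ throughout, is what makes the final multiplicity count clean.
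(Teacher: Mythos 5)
Your proof is correct and takes essentially the same route as the paper's: rotate the uncolored edge to each end edge of $Q$, apply the Parallel Edge Lemma from both long endpoints so that (because $\ell(Q)$ is odd) the resulting parallel edges pile up on a common interior edge, and use elementarity of $V(C)$ together with $k$-thinness to reach a contradiction. The paper simply fixes the target edge as $v_1v_2$ and is terser about the rotation and parity bookkeeping that you spell out.
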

\begin{proof}
Suppose to the contrary that we have a subpath $Q$ of $C$ with end vertices
long, all internal vertices short, $2\le \ell(Q) \le \ell(C) - 2$,
and $\ell(Q)$ odd.  Let $x$ and $y$ be the end vertices of $Q$.
Say $C = v_1v_2\cdots v_rv_0v_1$.  By rotating the $\alpha,\beta$ coloring of
$C$, we may assume that $x = v_0$ and $y = v_a$, where $a \ge 3$ is odd.

We now apply the \hyperref[SpecialPath]{Parallel Edge Lemma} twice, to show that $\mu(v_1v_2) \ge 2k -
d(v_0) - d(v_a)$, which contradicts that $G$ is $k$-thin.  More specifically,
assume that the edges $v_0v_1,v_1v_2,\ldots$ go clockwise around $C$.  We apply
the \hyperref[SpecialPath]{Parallel Edge Lemma} once going clockwise starting from $v_0$ and once going
counterclockwise starting from $v_a$.  The first application implies that every
color in $\vphn(v_0)$ appears on some edge parallel to $v_1v_2$; the second
implies the same for every color in $\vphn(v_a)$.  Since $\card{\vphn(v_i)}=k-d(v_i)$
for each $i\in\{0,a\}$ and $\vphn(v_0)\cap \vphn(v_a)=\emptyset$, the conclusion
follows.
\end{proof}

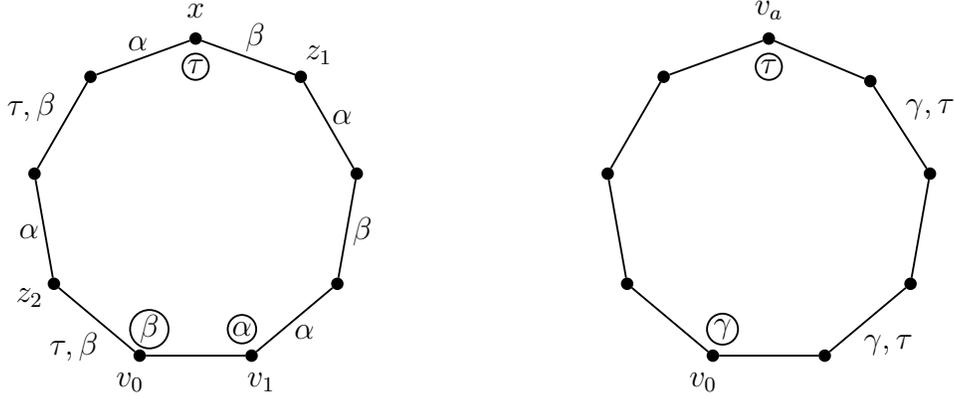
\begin{figure}
\centering
\begin{tikzpicture}[line width=.06em, rotate=90, scale=1.5]
\tikzstyle{blacknode}=[draw,circle,fill=black,minimum size=4pt,inner sep=0pt]
\tikzstyle{whitenode}=[circle,fill=white,minimum size=8pt,inner sep=0pt]
\tikzstyle{whitenode2}=[draw,circle,fill=white,minimum size=8pt,inner sep=.6pt]

\draw (20:1.5cm) node[whitenode] {$\alpha$};
\draw (40:1.45cm) node[blacknode] (b1) {};
\draw (60:1.68cm) node[whitenode] {$\tau,\beta$};
\draw (80:1.45cm) node[blacknode] (b2) {};
\draw (100:1.5cm) node[whitenode] {$\alpha$};
\draw (120:1.45cm) node[blacknode] (b3) {};
\draw (120:1.7cm) node[whitenode] {$z_2$};
\draw (140:1.68cm) node[whitenode] {$\tau, \beta$};
\draw (160:1.45cm) node[blacknode] (b4) {};
\draw (160:1.7cm) node[whitenode] {$v_0$};
\draw (160:1.20cm) node[whitenode2] {$\beta$};
\draw (200:1.45cm) node[blacknode] (b5) {};
\draw (200:1.7cm) node[whitenode] {$v_1$};
\draw (200:1.20cm) node[whitenode2] {$\alpha$};
\draw (220:1.5cm) node[whitenode] {$\alpha$};
\draw (240:1.45cm) node[blacknode] (b6) {};
\draw (260:1.5cm) node[whitenode] {$\beta$};
\draw (280:1.45cm) node[blacknode] (b7) {};
\draw (300:1.5cm) node[whitenode] {$\alpha$};
\draw (320:1.45cm) node[blacknode] (b8) {};
\draw (320:1.7cm) node[whitenode] {$z_1$};
\draw (340:1.55cm) node[whitenode] {$\beta$};
\draw (360:1.45cm) node[blacknode] (b9) {};
\draw (360:1.7cm) node[whitenode] {$x$};
\draw (360:1.20cm) node[whitenode2] {$\tau$};
\draw (b1) -- (b2) -- (b3) -- (b4) -- (b5) -- (b6) -- (b7) -- (b8) -- (b9) --
(b1);

\begin{scope}[yshift=-2in]
\draw (40:1.45cm) node[blacknode] (b1) {};
\draw (80:1.45cm) node[blacknode] (b2) {};
\draw (120:1.45cm) node[blacknode] (b3) {};
\draw (160:1.45cm) node[blacknode] (b4) {};
\draw (160:1.7cm) node[whitenode] {$v_0$};
\draw (160:1.2cm) node[whitenode2] {$\gamma$};
\draw (200:1.45cm) node[blacknode] (b5) {};
\draw (220:1.65cm) node[whitenode] {$\gamma,\tau$};
\draw (240:1.45cm) node[blacknode] (b6) {};
\draw (280:1.45cm) node[blacknode] (b7) {};
\draw (300:1.65cm) node[whitenode] {$\gamma,\tau$};
\draw (320:1.4cm) node[blacknode] (b8) {};
\draw (360:1.45cm) node[blacknode] (b9) {};
\draw (360:1.7cm) node[whitenode] {$v_a$};
\draw (360:1.2cm) node[whitenode2] {$\tau$};
\draw (b1) -- (b2) -- (b3) -- (b4) -- (b5) -- (b6) -- (b7) -- (b8) -- (b9) --
(b1);
\end{scope}
\end{tikzpicture}
\caption{The proofs of Lemmas~\ref{NonSpecialsInThinAreAtEvenDistance}
and~\ref{ThreeNonSpecialOnCycle}.\label{fig:thinLemmas}}
\end{figure}

\begin{lem}\label{ThreeNonSpecialOnCycle}
Let $G$ be a $k$-thin, critical graph with $\chi'(G) = k+1$ for some integer $k
\ge \Delta(G) + 1$.  Let $\vph$ be a $k$-edge-coloring of $G-v_0v_1$. Suppose
$\alpha \in \vphn(v_0)$ and $\beta \in \vphn(v_1)$ and let $C = P_{v_1}(\alpha,
\beta) + v_0v_1$.  If $\ell(C)\ge 5$ and $C$ contains exactly 3 long vertices,
then $C = xyAzBx$ where $A$ and $B$ are paths of even length and $x,y,z$ are
all long.  Moreover, $x$ is $y$-long and $y$ is $x$-long.
\end{lem}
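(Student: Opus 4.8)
The plan is to combine the structural information from Lemma~\ref{NonSpecialsInThinAreAtEvenDistance} with a counting argument using $k$-thinness. First I would set $C = P_{v_1}(\alpha,\beta) + v_0v_1$, a cycle of length $\ell(C)\ge 5$, and let $x,y,z$ denote its three long vertices. These three vertices partition $C$ into three arcs, each of whose internal vertices are short. I need each arc to have length at least $2$; if some arc had length $1$ (an edge directly joining two long vertices, say $x$ and $y$), I would observe that the \emph{complement} of that arc in $C$ is a subpath $Q$ of $C$ with long endpoints $x,y$, all internal vertices short, $\ell(Q)=\ell(C)-1\ge 4$, and also $\ell(Q)\le \ell(C)-2$ would fail --- so instead I handle this by noting that rotating the $\alpha,\beta$-coloring lets me assume $x=v_0$ and the edge $xy$ is $v_0v_1=e_0$; then $y=v_1$ and I apply the Parallel Edge Lemma directly from $v_0$ and from $v_a$ (here $v_a$ is the third long vertex $z$, reached going the long way around) to force $\mu(v_1v_2)\ge 2k-d(v_0)-d(z)$, contradicting $k$-thinness. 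This is essentially the same move as in Lemma~\ref{NonSpecialsInThinAreAtEvenDistance}, so every arc has length $\ge 2$.

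Next, for each of the three arcs, its two endpoints are long and its internal vertices are short; if the arc has length $\le \ell(C)-2$ then Lemma~\ref{NonSpecialsInThinAreAtEvenDistance} applies and gives that the arc has even length. Since $\ell(C)\ge 5$ and there are three arcs each of length $\ge 2$, each arc has length at most $\ell(C)-4 \le \ell(C)-2$, so Lemma~\ref{NonSpecialsInThinAreAtEvenDistance} applies to all three arcs and all three arc-lengths are even. In particular $\ell(C)$ is even. Labeling the three long vertices as $x,y,z$ so that the arc from $x$ to $y$ not through $z$ has length $1$... wait --- it has even length $\ge 2$; I relabel so that $C = xyAzBx$ really means: between $x$ and $y$ there is an arc which I must arrange to write as the single edge $xy$. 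To get that form, I rotate the $\alpha,\beta$-coloring of $C\cup\{e_0\}$ so that the edge $e_0 = v_0v_1$ lies on one of the arcs; since $e_0$ lies on $C$ and after rotation $e_0$ can be made to be incident to $v_0$ and $v_1$ with $v_0,v_1$ the chosen pair --- but $v_0,v_1$ need not be long. Here I think the intended reading of ``$C=xyAzBx$'' is that $xy$ is literally an edge of $C$ joining two of the long vertices, $A$ is the arc from $y$ to $z$, and $B$ is the arc from $z$ to $x$, with $A,B$ of even length; the third arc (from $x$ to $y$ through neither $A$ nor $B$) has length $1$. So I must rule out that all three arcs have length $\ge 2$: but I just argued they all have even length $\ge 2$, giving a contradiction unless... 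Let me reconsider: the resolution is that I run the short/long analysis more carefully. Some arc must have odd length or length $1$ once we track where $e_0$ sits, because the alternation of $\alpha$ and $\beta$ around $C$ interacts with the uncolored edge $e_0$; going around $C$ the colors alternate $\alpha,\beta,\alpha,\dots$ except $e_0$ is uncolored, so $\ell(C)$ is odd as an abstract cycle unless $e_0$ breaks parity. Tracking this parity obstruction against the evenness of each arc is the step I expect to be the main obstacle, and it is what forces exactly one arc to degenerate to a single edge $xy$; once that arc is the edge $xy$, applying the Parallel Edge Lemma clockwise from $x$ and counterclockwise from $y$ around the rest of $C$ (with $x$ playing the role of $v_0$ after rotation) and using that $z$ lies on that path forces enough parallel edges at the first edge to contradict $k$-thinness unless $x$ is $y$-long and $y$ is $x$-long, giving the final clause.

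Concretely, the steps in order: (1) rotate so a chosen long--long arc, if length $1$, is the edge $e_0$, and set up coordinates $C=v_1v_2\cdots v_rv_0v_1$; (2) show no arc between consecutive long vertices has length $1$ \emph{except possibly after choosing the right pair}, by the Parallel-Edge-Lemma-plus-$k$-thinness argument of Lemma~\ref{NonSpecialsInThinAreAtEvenDistance}; (3) apply Lemma~\ref{NonSpecialsInThinAreAtEvenDistance} to the other arcs to get even lengths, hence $\ell(C)$ has the right parity and the degenerate arc is forced; (4) write $C=xyAzBx$ with $A,B$ even; (5) for the ``moreover'' clause, suppose $x$ is $y$-short; then in the fan/path analysis the edge $xy$ being short-relative lets the Parallel Edge Lemma be applied starting at $x$ going around $A$ and at $y$ going around $B$ (so every color missing at $x$ and every color missing at $y$ appears parallel to the first edge out of $x$ toward $B$, i.e.\ toward $z$), yielding $\mu \ge (k-d(x)) + (k-d(y)) = 2k-d(x)-d(y)$ parallel edges at that edge since $\vphn(x)\cap\vphn(y)=\emptyset$ by Tashkinov's Lemma, contradicting $k$-thinness; symmetrically $y$ is $x$-long. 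The main obstacle is step (3): correctly bookkeeping the parity of arc lengths against the position of the uncolored edge $e_0$ and the $\alpha,\beta$ alternation, to conclude that exactly one arc degenerates.
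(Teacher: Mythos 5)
There is a genuine gap, and it sits exactly where you flag ``the main obstacle.'' The cycle $C=P_{v_1}(\alpha,\beta)+v_0v_1$ is an \emph{odd} cycle: the path $P_{v_1}(\alpha,\beta)$ starts at $v_1$ with an $\alpha$-edge (since $\beta\in\vphn(v_1)$) and ends at $v_0$ with a $\beta$-edge (since $\alpha\in\vphn(v_0)$), so it has even length, and adding $v_0v_1$ makes $\ell(C)$ odd. Hence your intermediate claims ``all three arc-lengths are even'' and ``in particular $\ell(C)$ is even'' are false, and your step (2) --- showing no arc between consecutive long vertices has length $1$ --- is attempting to prove the negation of the lemma's conclusion (the conclusion asserts one arc \emph{is} the single edge $xy$). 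The argument you offer for step (2) also fails on its own terms: after rotating so that $x=v_0$ and $y=v_1$, the Parallel Edge Lemma requires $v_1$ to be short (or $v_0$-short), but $y=v_1$ is long, so the lemma does not apply. The correct parity argument is short: the three arc lengths sum to the odd number $\ell(C)$, so either all three are odd or exactly one is odd. Any odd arc with $2\le\ell\le\ell(C)-2$ contradicts Lemma~\ref{NonSpecialsInThinAreAtEvenDistance}; in the all-odd case the longest arc has length at least $2$ (hence at least $3$) and at most $\ell(C)-2$, a contradiction, so exactly one arc is odd and it must have length $1$. You never carry out this step, and the statements you do assert along the way are inconsistent with $\ell(C)$ being odd.

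Your argument for the ``moreover'' clause is also not correct as written. Assuming $x$ is $y$-short and rotating so $y=v_0$, $x=v_1$, the Parallel Edge Lemma propagates colors missing at the \emph{root} $v_0$ along the path starting at $v_1$; so $\vphn(y)$ lands parallel to $v_1v_2$ (the first edge of $B$), but $\vphn(x)$ would be propagated along the path starting at $v_0$, i.e.\ parallel to the first edge of $A$ out of $y$ --- a different edge. You cannot add $(k-d(x))+(k-d(y))$ against a single multiplicity. The working argument pairs $y$ with the \emph{third} long vertex $z$: the subpath from $y$ through $x$ and along $B$ to $z$ has odd length $1+\ell(B)$, its interior consists of $x$ (which is $y$-short, so the weakened PEL hypothesis applies) and genuinely short vertices, and the two-sided application as in Lemma~\ref{NonSpecialsInThinAreAtEvenDistance} gives $\mu(v_1v_2)\ge 2k-d(y)-d(z)$, contradicting $k$-thinness since $y$ and $z$ are distinct long vertices.
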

\begin{proof}
Let $G$ be a graph satisfying the hypotheses, and let $x$, $y$, $z$ be the
three long vertices.
The three subpaths of $C$ with endpoints $x$,
$y$, and $z$ either (i) all have odd length or (ii) include two paths of even
length and one of odd length.  
If we are in (i), then the longest of these three subpaths
violates Lemma~\ref{NonSpecialsInThinAreAtEvenDistance}; so we are in (ii), and
also the path of odd length is simply an edge.  This proves the first statement.
For the second statement, assume to the contrary that $x$ is $y$-short.
By rotating the $\alpha,\beta$ coloring, we can assume that $y=v_0$ and $x=v_1$.
As in the previous lemma, we use the \hyperref[SpecialPath]{Parallel Edge Lemma} (and the comment in the
first paragraph of Section~\ref{sec:thin}) to conclude that $\mu(v_1v_2)\ge
2k-d(v_0)-d(z)$.  As above, this contradicts that $G$ is $k$-thin; this
contradiction proves the second statement.
\end{proof}

\begin{lem}\label{ConsecutiveNonSpecials}
Let $G$ be a non-elementary, $k$-thin, critical graph with $\chi'(G) = k+1$ for
some integer $k \ge \Delta(G) + 1$.  Choose $(T, v_0v_1, \vph) \in \T(G)$. If
$\alpha \in \vphn(v_0)$ and $\beta \in \vphn(v_1)$, then $P_{v_1}(\alpha,
\beta) + v_0v_1$ contains consecutive long vertices.
\end{lem}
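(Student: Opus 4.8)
The plan is to argue by contradiction, exploiting the fact that $C \DefinedAs P_{v_1}(\alpha,\beta)+v_0v_1$ is always an \emph{odd} cycle with no $\alpha$- or $\beta$-coloured edge leaving it. To see this, write $P \DefinedAs P_{v_1}(\alpha,\beta)$ and recall that $P$ ends at $v_0$. Since $\chi'(G)>k$ no colour is missing at both $v_0$ and $v_1$, so $\alpha\in\vph(v_1)$ and $P$ is nontrivial; as $\beta\in\vphn(v_1)$ its edge at $v_1$ is coloured $\alpha$, as $\alpha\in\vphn(v_0)$ its edge at $v_0$ is coloured $\beta$, and the colours along $P$ strictly alternate. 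Hence the colour word along $P$ is $\alpha\beta\alpha\beta\cdots\alpha\beta$, so $\ell(P)$ is even and $\ell(C)=\ell(P)+1$ is odd; moreover each vertex of $C$ meets at most one $\alpha$-edge and at most one $\beta$-edge (the uncoloured edge $v_0v_1$ counting for neither), so no $\alpha$- or $\beta$-edge leaves $V(C)$.

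Next I would show $C$ has at least two long vertices. By Lemma~\ref{FreeColorsLemma}, $\card{V(C)}=\card{P_{v_1}(\alpha,\beta)}<t(G)=\card{V(T)}$. Read off as the sequence $v_0,\,v_0v_1,\,v_1,\,v_1v_2,\,v_2,\dots$, the path $P$ is a Tashkinov tree with respect to $v_0v_1$ and $\vph$ whose vertex set is $V(C)$; extend it to a maximal one $T'$. Since $T$ is likewise a maximal Tashkinov tree starting with $v_0v_1$ w.r.t.\ $\vph$, the remark on maximal Tashkinov trees gives $V(T')=V(T)$, so $V(C)\subseteq V(T)$ and the containment is strict. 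Let $v_s$ be the first vertex of the sequence for $T'$ outside $V(C)$; it is joined to an earlier vertex $v_j$ by an edge whose colour $\tau$ is missing at some earlier vertex $v_\ell$, and both $v_j,v_\ell\in V(C)$ since they precede $v_s$. By the previous paragraph $\tau\notin\{\alpha,\beta\}$, so Lemma~\ref{TauEscape} (applied with $x=v_\ell$, $y=v_j$, $w=v_s$) yields a subpath $Q$ of $C$ with long endpoints $z_1,z_2$ lying at odd distance from $v_\ell$ along $Q$. Because $\ell(C)$ is odd, the vertices at odd distance from $v_\ell$ in one direction around $C$ are exactly those at even distance in the other direction, so $z_1\ne z_2$: $C$ has at least two long vertices.

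Finally I would finish by a parity count. Assume for contradiction that no two long vertices of $C$ are consecutive on $C$, and list the long vertices $x_1,\dots,x_m$ of $C$ in cyclic order, with $m\ge2$ by the previous step. For each $i$ let $Q_i$ be the arc of $C$ from $x_i$ to $x_{i+1}$ (indices mod $m$), so all internal vertices of $Q_i$ are short and $\ell(Q_i)\ge2$ as $x_i$ and $x_{i+1}$ are nonadjacent. Since the $Q_i$ partition the edges of $C$ and $m\ge2$, also $\ell(Q_i)=\ell(C)-\sum_{j\ne i}\ell(Q_j)\le\ell(C)-2$, so Lemma~\ref{NonSpecialsInThinAreAtEvenDistance} applies to each $Q_i$ and each $\ell(Q_i)$ is even. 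Then $\ell(C)=\sum_i\ell(Q_i)$ is even, contradicting that $\ell(C)$ is odd. Hence $C$ has two consecutive long vertices.

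I expect the hard part to be the middle step: packaging $P$ as a Tashkinov tree so that the edge escaping $V(C)$ is certified to carry a colour missing at a vertex \emph{of $C$} (rather than merely of $T'$), and then using the oddness of $\ell(C)$ a second time to guarantee that the two long vertices supplied by Lemma~\ref{TauEscape} are distinct. The first and last steps are routine once this is set up.
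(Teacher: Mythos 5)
Your proposal is correct and follows essentially the same route as the paper's proof: use Lemma~\ref{FreeColorsLemma} to certify a $\tau$-colored edge leaving $V(C)$ with $\tau$ missing at a vertex of $C$, invoke Lemma~\ref{TauEscape} to produce two distinct long vertices, and then use Lemma~\ref{NonSpecialsInThinAreAtEvenDistance} together with the oddness of $\ell(C)$ to force two long vertices to be consecutive. The only differences are presentational: you spell out the extraction of the escaping edge (via extending $P$ to a maximal Tashkinov tree), which the paper leaves implicit, and you run the final parity count over all of $C$ rather than over the arc $Q'$ avoiding $x$.
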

\begin{proof}
Let $C = P_{v_1}(\alpha, \beta) + v_0v_1$.  By Lemma \ref{FreeColorsLemma},
there is $x \in V(C)$ and $\tau \in \vphn(x)$ such that there is a
$\tau$-colored edge from $y \in V(C)$ to $w \in V(T) \setminus V(C)$.
Lemma \ref{TauEscape} implies that $C$ has a subpath $Q$ with $x \in V(Q)$
 and long endpoints $z_1,z_2$ such that the distance from $x$ to
$z_i$ along $Q$ is odd for each $i \in \irange{2}$.  
Let $Q'$ be the subpath of $C$ with endpoints $z_1$ and $z_2$ that does not contain
$x$. Since $C$ is an odd cycle, $\ell(Q')$ is odd.  Let $Q^*$ be a minimum
length subpath of $Q'$ with long ends.  Now $\ell(Q^*) = 1$ by Lemma
\ref{NonSpecialsInThinAreAtEvenDistance}, as desired.
\end{proof}

\begin{lem}\label{MasterHelper}
Let $G$ be a non-elementary, $k$-thin, critical graph with $\chi'(G) = k+1$ for
some integer $k \ge \Delta(G) + 1$.  If $(T, v_0v_1, \vph) \in \T(G)$ and
$\nu(T) \le 3$, then $T$ contains long vertices $z_1,z_2,z_3$ such that either 
\begin{enumerate}
\item $z_1$ is $\set{z_2,z_3}$-long and $z_2$ is $z_1$-long; or
\item $z_i$ is $z_j$-long and $z_j$ is $z_i$-long for each 
$(i,j)\in\{(1,2),(2,3)\}$.
\end{enumerate}
\end{lem}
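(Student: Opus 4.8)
The plan is to run a case analysis on $\nu(T)$, using the two‑coloured cycle through the uncoloured edge as the main object. To set up, fix $\alpha\in\vphn(v_0)$ and $\beta\in\vphn(v_1)$ and put $C=P_{v_1}(\alpha,\beta)+v_0v_1$. Reading the vertices of $C$ off from $v_0v_1$ gives a Tashkinov tree with respect to $v_0v_1$ and $\vph$, so by maximality of $T$ we have $V(C)\subseteq V(T)$; hence every long vertex of $C$ is a long vertex of $T$. By Lemma~\ref{ConsecutiveNonSpecials}, $C$ has two consecutive long vertices, so $\nu(T)\ge 2$, and combined with the hypothesis $\nu(T)\in\{2,3\}$. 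The argument then splits according to whether $C$ carries three long vertices or exactly two.

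\emph{Three long vertices on $C$} (which forces $\nu(T)=3$). Here Lemma~\ref{ThreeNonSpecialOnCycle} hands us $C=xyAzBx$ with $A,B$ of even length, all internal vertices of $A,B$ short, and $x$ is $y$-long while $y$ is $x$-long. Since $\ell(A),\ell(B)$ are even, after rotating the $\alpha,\beta$-coloring so that the edge of $C$ at $x$ on the $B$-side is uncoloured, all three long vertices lie at even positions along $C$, so the \hyperref[SpecialPath]{Parallel Edge Lemma} (in the $S$-short form noted after Lemma~\ref{NonSpecialsInThinAreAtEvenDistance}) applies around the whole cycle from $x$, and symmetrically from $y$; this forces the colours of $\vphn(x)$ and of $\vphn(y)$ onto the two edges of $C$ incident to $z$, and by elementarity of $V(T)$ this yields $d(z)\ge 2k-d(x)-d(y)$. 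Now $x,y$ already witness ``$z_2$ is $z_1$-long'' and the first edge of clause (2), so it only remains to place $z$. Assuming for contradiction that no ordering of $\{x,y,z\}$ achieves (1) or (2), I would extract that $x$ is $\{y,z\}$-short, $y$ is $\{x,z\}$-short, and $z$ is not mutually long with either of $x,y$; feeding these extra fan restrictions back into the \hyperref[SpecialPath]{Parallel Edge Lemma} (now pushing colours \emph{toward} $x$ or $y$ along $A$ or $B$, which the new short hypotheses make legal) produces a second large family of parallel edges, and $k$-thinness applied to the long pairs $\{x,z\},\{y,z\},\{x,y\}$—together with the count $\sum_{v\in V(T)}(k-d(v))\le k$ and the escaping colour of Lemma~\ref{TauEscape}—is then contradictory. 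Hence some ordering of $\{x,y,z\}$ works.

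\emph{Two long vertices on $C$}, say $a,b$, which Lemma~\ref{ConsecutiveNonSpecials} puts adjacent on $C$ with every other vertex of $C$ short; write $C=a,b,w_1,\dots,w_m,a$. When $\nu(T)=2$ these are the only long vertices of $T$, and every legal choice of $(z_1,z_2,z_3)$ with entries in $\{a,b\}$ makes the conclusion equivalent to ``$a$ is $b$-long and $b$ is $a$-long'', so I must prove this; when $\nu(T)=3$ there is a further long vertex $z'\in V(T)\setminus V(C)$, reachable along $T$, to supply slack. Assume toward a contradiction that $a$ is $b$-short. By Lemma~\ref{FreeColorsLemma} some colour $\tau$ escapes $V(C)$, and Lemma~\ref{TauEscape} then forces $m$ odd and no $\tau$-coloured edge at $a$ or $b$ toward its $C$-neighbours, while the \hyperref[SpecialPath]{Parallel Edge Lemma} with $b$ in the role of $v_0$ and $a$ that of $v_1$ puts every colour of $\vphn(b)$ on every odd‑position edge of the path $a,w_m,\dots,w_1$. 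If also $b$ is $a$-short, the mirror application puts $\vphn(a)$ on the even‑position edges; since $V(T)$ is elementary, $\vphn(a),\vphn(b),\vphn(w_1),\dots,\vphn(w_m)$ are pairwise disjoint and the two edge‑families are disjoint, so each $w_i$ has degree $\ge 2k-d(a)-d(b)$, and then $\sum_{v\in V(T)}(k-d(v))\le k$, $d(w_i)\le\Delta(G)\le k-1$, and $k$-thinness collapse the picture to a contradiction. If instead $b$ is $a$-long, I would rotate the $\alpha,\beta$-coloring around $C$ to absorb the remaining missing colours—possible at every vertex except across $b$, which is exactly the obstruction the $b$-long hypothesis removes on one side—and conclude $V(C)=V(T)$ against Lemma~\ref{FreeColorsLemma}, folding in $z'$ when $\nu(T)=3$.

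The step I expect to be the real obstacle is this two‑long‑vertices case: a single pass of the \hyperref[SpecialPath]{Parallel Edge Lemma} only moves the colours missing at one endpoint, producing roughly $\mu(G)\ge k-d(b)$, which is short of the $2k-d(a)-d(b)$ bar that $k$-thinness needs; the whole difficulty is manufacturing the second family of parallel edges—from the symmetric short hypothesis, from the off‑cycle long vertex $z'$, or from the escaping colour of Lemma~\ref{TauEscape}—and doing the degree/colour bookkeeping so that $k$-thinness or Lemma~\ref{FreeColorsLemma} finally bites. By contrast, once Lemma~\ref{ThreeNonSpecialOnCycle} is available the three‑long‑vertices case is mostly a matter of organizing the six orderings of $\{x,y,z\}$.
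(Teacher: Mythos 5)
There is a genuine gap, and it sits exactly where you predicted. In the two-long-vertex case your stated goal (``$a$ is $b$-long and $b$ is $a$-long'') is not what the paper proves, and your argument for it does not close: one pass of the \hyperref[SpecialPath]{Parallel Edge Lemma} from each end gives $\mu\ge k-d(a)$ on some edges and $\mu\ge k-d(b)$ on others, but since $a$ and $b$ are adjacent on $C$ the two families of parallel edges never land on a common edge (an edge at odd distance from $b$ along the long arc is at even distance from $a$), so you never reach the $2k-d(a)-d(b)$ threshold that $k$-thinness needs, and the bound $d(w_i)\ge 2k-d(a)-d(b)$ contradicts nothing by itself --- at this stage of the paper there are no degree upper bounds on long vertices. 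The device you are missing is twofold. First, the paper chooses $\alpha,\beta$ \emph{extremally}: so that $P_{v_1}(\alpha,\beta)$ contains as many long vertices as possible and, subject to that, is as long as possible. Second, it uses the escaping color $\tau$ of Lemma~\ref{FreeColorsLemma} not to build more parallel edges but to build a \emph{second Kempe cycle} $C'=P_{v_1}(\tau,\beta)+v_0v_1$ (after rotating so that $x=v_0$). Lemma~\ref{TauEscape} guarantees $z_1$ and $z_2$ both lie on $C'$ but are separated there by the short subpaths $v_1Qz_1$ and $v_0Qz_2$, hence are not consecutive on $C'$; the extremal choice forbids $C'$ from containing any new long vertex; and Lemma~\ref{ConsecutiveNonSpecials} applied to $C'$ then gives a contradiction. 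So the two-long-vertex case is impossible, rather than something to be converted into conclusion (2).

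The three-long-vertex case has the same structural omission. You correctly extract from Lemma~\ref{ThreeNonSpecialOnCycle} that the adjacent pair is mutually long, but the passage from there to (1) or (2) is asserted, not argued (``feeding these extra fan restrictions back \dots is then contradictory''). The paper's actual mechanism is again the auxiliary cycle $C'=P_{v_1}(\tau,\beta)+v_0v_1$: since all long vertices of $C'$ must be among $z_1,z_2,z_3$ (else $\nu(T)\ge4$), Lemma~\ref{ConsecutiveNonSpecials} forces either $z_2z_3$ or $z_1z_3$ to be a $\tau$-edge of $C'$; the first subcase gives (2) via Lemma~\ref{ThreeNonSpecialOnCycle} applied to $C'$, and the second requires a further shift to uncover a color $\gamma$ at the predecessor of $z_1$ and, if the $\gamma$-edge avoids $z_2$ and $z_3$, a third cycle $C''=P_{z_1}(\gamma,\beta)+z_1z_1'$. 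You also need the preliminary step $\ell(C)\ge5$ before Lemma~\ref{ThreeNonSpecialOnCycle} applies, which your sketch skips. In short, the proposal has the right scaffolding (consecutive long vertices, the even-length decomposition, the mutual-longness of one pair) but is missing the extremal choice of $C$ and the repeated construction of auxiliary two-colored cycles, which is the engine of the whole proof.
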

\begin{proof}
Choose $\alpha \in \vphn(v_0)$ and $\beta \in \vphn(v_1)$ so that
$P_{v_1}(\alpha, \beta)$ contains as many long vertices as possible and,
subject to that, $P_{v_1}(\alpha,\beta)$ is as long as possible.
Let $C=P_{v_1}(\alpha,\beta)+v_0v_1$.
By Lemma \ref{FreeColorsLemma}, there is $x \in V(C)$ and $\tau \in \vphn(x)$
such that there is a $\tau$-colored edge from $y \in V(C)$ to $w \in V(T)
\setminus V(C)$.  By Lemma~\ref{ConsecutiveNonSpecials}, $C$ has at least 2
long vertices.

First suppose that $C$ contains only 2 long vertices, $z_1$ and $z_2$; see the
left side of Figure~\ref{fig:MasterHelper}.  By
Lemma \ref{ConsecutiveNonSpecials}, $z_1$ and $z_2$ are consecutive on $C$.
Lemma \ref{TauEscape} implies that $C$ has a subpath $Q$ 
with endpoints $z_1,z_2$ such that $x \in V(Q)$ and $y \not \in V(Q-z_1-z_2)$ 
and for each $i \in \irange{2}$ 
there are no $\tau$-colored edges between $z_i$ and its neighbors on $C$.
By rotating the $\alpha,\beta$ coloring of $C$, we can assume that $x = v_0$ and
$\alpha,\tau\in \vphn(v_0)$ and $\beta\in\vphn(v_1)$. 
Note that $P_{v_1}(\tau,\beta)$ must end at $v_0$ (since otherwise we can
recolor the Kempe chain and color $v_0v_1$ with $\tau$).  
Let $C' = P_{v_1}(\tau, \beta) + v_0v_1$.  Note that $C'$ must include $v_1Qz_1$
and also $v_0Qz_2$ (the $\beta$-colored edges are present by definition and the
$\tau$-colored edges are present by the 
the \hyperref[SpecialPath]{Parallel Edge Lemma}).  
Thus, $z_1,z_2\in
V(C')$.  Since $z_1$ and $z_2$ are not consecutive on $C'$
and $C'$ contains no other long vertices by the maximality condition on $C$,
Lemma \ref{ConsecutiveNonSpecials} gives a contradiction.

So instead assume that $C$ contains exactly 3 long vertices. 
Now we prove that $\ell(C)\ge 5$.  Suppose, to the contrary, that $C=v_0v_1v_2$
and each vertex is a long vertex.  By Lemma~\ref{FreeColorsLemma}, some color
that is missing at a vertex of $C$ is used on an edge leaving $C$.  By
symmetry, assume that color $\tau$ is missing at $v_0$ and is used on an edge
$v_1y$, where $y\notin V(C)$.
Uncolor $v_2v_0$ and color $v_0v_1$ with $\beta$.   Now the $\beta,\tau$ path
starting at $v_0$ ends at $v_2$,  has length at least 4 and contains $v_0,
v_1, v_2$.  So the union of $v_0v_2$ with this path gives a longer $C$, a
contradiction.

By Lemma
\ref{ThreeNonSpecialOnCycle}, $C = z_1z_2Az_3Bz_1$ where $A$ and $B$ are paths
of even length.  Also, $z_1$ is $z_2$-long and $z_2$ is $z_1$-long.  
By Lemma \ref{TauEscape}, $C$ has a subpath $Q$ 
with endpoints $z_1,z_3$ and with $x \in V(Q)$ and $y \not
\in V(Q-z_1-z_3)$ 
such that there are no $\tau$-colored edges
between $z_i$ and its neighbors along $C$ for each $i \in \set{1,3}$ (it could
happen that $z_3$ has a $\tau$-colored edge parallel to an edge of $C$,
so the endpoints of $Q$ are $z_1, z_2$, but now we get a contradiction
as in the previous case, by letting $C'=P_{v_1}(\tau,\beta)+v_0v_1$).  By
rotating the $\alpha,\beta$ coloring of $C$, we
may assume that $x = v_0$.  Again, let $C' = P_{v_1}(\tau, \beta) + v_0v_1$.  We
know that $C'$ contains $z_1$ and $z_3$ and that $z_1$ and $z_2$ are not
consecutive on $C'$.  Note also that all long vertices in $V(C')$ must be among
$z_1,z_2,z_3$, since otherwise $\nu(T)\ge 4$, contradicting our hypothesis.
So by Lemma~\ref{ConsecutiveNonSpecials}, either $z_1$ and
$z_3$ are consecutive on $C'$ or $z_2$ and $z_3$ are consecutive on $C'$.

Suppose that $z_2$ and $z_3$ are consecutive on $C'$, and thus connected by a
$\tau$-colored edge.  Now applying Lemma \ref{ThreeNonSpecialOnCycle} shows that $z_2$
is $z_3$-long and $z_3$ is $z_2$-long, so we satisfy (2) in the conclusion of
the lemma (by swapping the names of $z_1$ and $z_2$).

So instead $z_1$ and $z_3$ must be consecutive on $C'$, and thus connected by a
$\tau$-colored edge.  If $z_1 = v_1$, then we have a fan with an
$\alpha$-colored edge from $z_1$ to $z_2$ and a $\tau$-colored edge from $z_1$
to $z_3$, so $z_1$ is $\set{z_2,z_3}$-long. 

Now assume that $z_1\ne v_1$; see the right side of
Figure~\ref{fig:MasterHelper}.
Let $z_1'$ be the predecessor of $z_1$ on the path from $v_0$ (through $v_1$) to
$z_1$.  We can shift the coloring so that $z_1'z_1$ is uncolored and $z_1z_2$
is colored $\alpha$ (as in the proof of the \hyperref[SpecialPath]{Parallel
Edge Lemma}).  In fact, 
we can shift either the $\alpha,\beta$ edges or the $\tau,\beta$ edges.  This
gives the options that either $\alpha\in \vphn(z_1')$ or $\tau\in \vphn(z_1')$,
whichever we prefer.  Suppose we shift the $\tau,\beta$ edges.
Now choose $\gamma\in \vphn(z_1')-\alpha-\tau$.  Consider
the $\gamma$-colored edge $e$ incident to $z_1$.  If $e$ goes to $z_2$, then we
$z_1$ is $\{z_2,z_3\}$-long, by colors $\gamma$ and $\tau$; so we satisfy (1) in
the conclusion of the lemma.
If instead $e$ goes to $z_3$, then instead of shifting the $\tau,\beta$ edges we
shift the $\alpha,\beta$ edges; note that this recoloring preserves the fact
that $\gamma$ is missing at $z_1'$.  Now again $z_1$ is $\{z_2,z_3\}$-long, this
time by colors $\alpha$ and $\gamma$; so we again satisfy (1) in the conclusion
of the lemma.

Finally, assume that the $\gamma$-colored edge incident to $z_1$ goes to some
vertex other than $z_2$ and $z_3$.  Now let $C''=P_{z_1}(\gamma,\beta)+z_1z_1'$.
Since $V(C'')\subseteq V(T)$, Lemmas~\ref{ConsecutiveNonSpecials} and
\ref{ThreeNonSpecialOnCycle} imply that $z_2$ and $z_3$ are
adjacent on $C''$ and furthermore $z_2$ is $z_3$-long and $z_3$ is $z_2$-long;
thus, we satisfy (2) in the conclusion of the lemma.
\end{proof}

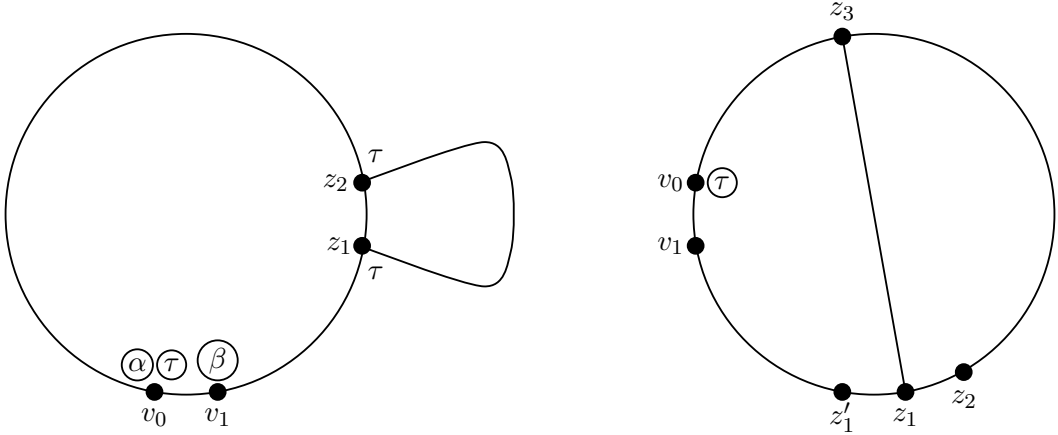
\begin{figure}
\centering
\begin{tikzpicture}[scale = 12, line width=.06em]
\tikzstyle{VertexStyle} = []
\tikzstyle{EdgeStyle} = []
\tikzstyle{edgeLeft} = [style={bend left=10}]
\tikzstyle{edgeRight} = [style={bend right=10}]
\tikzstyle{labeledStyle}=[shape = circle, minimum size = 6pt, inner sep = 1.2pt]
\tikzstyle{labeledStyle2}=[shape = circle, minimum size = 6pt, inner sep =
1.2pt, draw]
\tikzstyle{unlabeledStyle}=[shape = circle, minimum size = 6pt, inner sep = 1.2pt, draw, fill]
\draw (0,.5) circle (.20cm);
\Vertex[style = labeledStyle2, x = -0.054, y = 0.333, L = \small {$\alpha$}]{v00}
\Vertex[style = labeledStyle2, x = -0.016, y = 0.333, L = \small {$\tau$}]{v0}
\Vertex[style = labeledStyle2, x = 0.035, y = 0.338, L = \small {$\beta$}]{v1}
\Vertex[style = unlabeledStyle, x = -.035, y = 0.303, L = \tiny {}]{v2}
\Vertex[style = labeledStyle, x = -0.035, y = 0.275, L = \small {$v_0$}]{v3}
\Vertex[style = unlabeledStyle, x = 0.035, y = 0.303, L = \tiny {}]{v4}
\Vertex[style = labeledStyle, x = 0.035, y = 0.275, L = \small {$v_1$}]{v5}
\draw[] plot [smooth] coordinates {(.1925,.465) (.33,.42)
(.36,.46) (.36,.54) (.33,.58) (.1925,.535)}; 
\Vertex[style = labeledStyle, x = 0.21, y = 0.435, L = \small {$\tau$}]{v10}
\Vertex[style = labeledStyle, x = 0.21, y = 0.565, L = \small {$\tau$}]{v11}
\Vertex[style = unlabeledStyle, x = 0.195, y = 0.465, L = \small {}]{v12}
\Vertex[style = unlabeledStyle, x = 0.195, y = 0.535, L = \small {}]{v13}
\Vertex[style = labeledStyle, x = 0.17, y = 0.465, L = \small {$z_1$}]{v14}
\Vertex[style = labeledStyle, x = 0.166, y = 0.535, L = \small {$z_2$}]{v15}


\begin{scope}[xshift=.3in]
\draw (0,.5) circle (.20cm);
\Vertex[style = unlabeledStyle, x = -.035, y = 0.303, L = \tiny {}]{v2}
\Vertex[style = labeledStyle, x = -0.035, y = 0.275, L = \small {$z'_1$}]{v3}
\Vertex[style = unlabeledStyle, x = 0.035, y = 0.303, L = \tiny {}]{v4}
\Vertex[style = labeledStyle, x = 0.035, y = 0.275, L = \small {$z_1$}]{v5}
\Vertex[style = unlabeledStyle, x = 0.0995, y = 0.325, L = \small {}]{v6}
\Vertex[style = labeledStyle, x = 0.10000, y = 0.295, L = \small {$z_2$}]{v7}
\Vertex[style = unlabeledStyle, x = -0.1975, y = 0.535, L = \small {}]{v16}
\Vertex[style = labeledStyle2, x = -0.168, y = 0.535, L = \small {$\tau$}]{v19}
\Vertex[style = unlabeledStyle, x = -0.1975, y = 0.465, L = \small {}]{v16}
\Vertex[style = labeledStyle, x = -0.225, y = 0.535, L = \small {$v_0$}]{v18}
\Vertex[style = labeledStyle, x = -0.225, y = 0.465, L = \small {$v_1$}]{v28}
\Vertex[style = unlabeledStyle, x = -.035, y = 0.697, L = \tiny {}]{v21}
\Vertex[style = labeledStyle, x = -0.035, y = 0.725, L = \small {$z_3$}]{v20}
\draw (-.035, .697) -- (.035, .303);
\end{scope}
\end{tikzpicture}
\caption{Two parts of the proof of Lemma~\ref{MasterHelper}.\label{fig:MasterHelper}}
\end{figure}

We need the following result from~\cite{rabern2011strengthening}, which we use
to handle the case when $G$ is not $k$-thin.

\begin{thm}[\cite{rabern2011strengthening}]\label{CriticalMuBound}
If $Q$ is the line graph of a graph $G$ and $Q$ is vertex critical, then
\[\chi(Q) \leq \max\left\{\omega(Q), \Delta(Q) + 1 - \frac{\mu(G) - 1}{2}\right\}.\]
\end{thm}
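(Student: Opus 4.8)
The plan is to reduce this (quoted) statement to a single counting argument about edge-critical multigraphs, avoiding Tashkinov trees entirely. First I would record the dictionary between $Q$ and $G$: since $Q=L(G)$ we have $\chi(Q)=\chi'(G)$, and the vertex of $Q$ corresponding to $e=xy\in E(G)$ has degree $d(x)+d(y)-2$, so $\Delta(Q)=\max_{xy\in E(G)}\bigl(d(x)+d(y)\bigr)-2$. Moreover, ``$Q$ is vertex critical'' says exactly that $\chi'(G-e)<\chi'(G)$ for every $e\in E(G)$, i.e.\ that $G$ is critical in the sense of Section~2. Write $k=\chi'(G)-1$. It then suffices to produce a single edge $uv\in E(G)$ with $d(u)+d(v)\ge k+\mu(G)+1$: this gives $\Delta(Q)\ge k+\mu(G)-1$, hence $\chi(Q)=k+1\le\Delta(Q)+2-\mu(G)\le\Delta(Q)+1-\tfrac{\mu(G)-1}{2}$ (using $\mu(G)\ge 1$), which is in fact slightly stronger than the stated bound and makes the $\omega(Q)$ alternative unnecessary.

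For the main step, let $u,v$ be vertices of $G$ joined by $m=\mu(G)$ parallel edges, let $e_0$ be one of them, and let $\vph$ be a $k$-edge-coloring of $G-e_0$, which exists since $G$ is critical. The only fact I need is $\vphn(u)\cap\vphn(v)=\emptyset$: a color missing at both could be assigned to $e_0$, contradicting $\chi'(G)>k$. Consequently every color of $\vphn(v)$ is present at $u$. Now count the colors appearing on edges incident to $u$ in $G-e_0$: there are exactly $d(u)-1$ of them, and this set contains (i) the $m-1$ colors on the remaining $u$--$v$ edges, and (ii) the $|\vphn(v)|=k-d(v)+1$ colors of $\vphn(v)$. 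Subsets (i) and (ii) are disjoint, since each color in (i) appears at $v$. Hence $d(u)-1\ge(m-1)+(k-d(v)+1)$, that is, $d(u)+d(v)\ge k+m+1$, exactly as the first paragraph requires; combining the two paragraphs finishes the proof.

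The one point needing care is the license to delete \emph{one of the $\mu(G)$ parallel edges} $e_0=uv$ while still keeping a $k$-coloring available — this is legitimate precisely because vertex-criticality of $Q$ makes \emph{every} edge of $G$ critical, in particular those realizing $\mu(G)$. So the main ``obstacle,'' such as it is, is recognizing that no Vizing-fan or Tashkinov-tree recoloring is needed here: a single application of the elementarity of $\{u,v\}$ at the fattest edge of $G$ already suffices, and with a bit of room to spare relative to the inequality as stated. (If instead one wanted to follow a fan-based argument along the lines of \cite{rabern2011strengthening}, one could build a maximal Vizing fan at $u$ rooted at $e_0$ and use that its vertex set is elementary; but the degree count above is shorter and yields a stronger conclusion, so that is the route I would take.)
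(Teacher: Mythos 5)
The paper does not actually prove this statement; it imports it verbatim from \cite{rabern2011strengthening}, where the proof is fan-based. Your attempt to replace that with a single counting argument fails at the very first step of your ``dictionary'': for a multigraph, the vertex of $Q=L(G)$ corresponding to $e=xy$ has degree $d_G(x)+d_G(y)-\mu(xy)-1$, not $d_G(x)+d_G(y)-2$, because the $\mu(xy)-1$ edges parallel to $e$ meet $e$ at both endpoints but each contributes only one neighbor in the (simple) line graph; the present paper uses exactly this formula in the proof of Corollary~\ref{CriticalElementary56}. Your count in the second paragraph is correct and does give $d(u)+d(v)\ge k+\mu(G)+1$ for a maximum-multiplicity pair $u,v$, but with the correct degree formula this only yields $d_Q(e_0)=d(u)+d(v)-\mu(G)-1\ge k$, i.e.\ the trivial bound $\chi(Q)\le\Delta(Q)+1$: the multiplicity gain is cancelled exactly by the correction term in $d_Q$.

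Your remark that the $\omega(Q)$ alternative is ``unnecessary'' should have been a warning sign, since the bound without that branch is false. Take $G$ to be two vertices joined by $m\ge 2$ parallel edges: then $Q=K_m$ is vertex critical, $\chi(Q)=m$, $\Delta(Q)=m-1$, $\mu(G)=m$, and $\Delta(Q)+1-\frac{\mu(G)-1}{2}=m-\frac{m-1}{2}<m=\chi(Q)$. (Here your count gives $d(u)+d(v)=2m=k+m+1$, as predicted, but $\Delta(Q)=m-1$, not $2m-2$.) So the elementarity of the single pair $\{u,v\}$ at the fattest edge cannot suffice; to extract the genuine $\frac{\mu(G)-1}{2}$ savings one needs the Vizing-fan route you set aside, in which one builds a maximal fan rooted at a maximum-multiplicity edge and uses elementarity of the \emph{entire} fan (together with the fact that the colors missing at the leaves are absorbed by edges of the fan) to force some edge $uy_i$ with $d_Q(uy_i)$ large enough, or else a large clique in $Q$ realizing the $\omega(Q)$ branch.
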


Now we prove the main result of this section, the Weak $\frac56$-Theorem.  It
encapsulates most of what we will need from the first four sections when we prove
our main result, the $\frac56$-Theorem, in Section~\ref{sec:final}.

\begin{thm}[Weak $\frac56$-Theorem]
If $Q$ is the line graph of $G$, then
\[\chi(Q) \le \max\set{\W(G), \Delta(G) + 1, \frac{5\Delta(Q) + 8}{6}}.\]
\label{mainhelper}
\end{thm}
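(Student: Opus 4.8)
The plan is to argue by contradiction: let $G$ be a counterexample with $\card{G}+\size{G}$ minimum, and set $Q=L(G)$, $k=\chi(Q)-1=\chi'(G)-1$. Minimality gives that $G$ is critical and that $\chi'(G)=k+1>\max\set{\W(G),\Delta(G)+1,\frac{5\Delta(Q)+8}{6}}$; in particular $k\ge\Delta(G)+1$ and $G$ is non-elementary (by Theorem~\ref{elementary}, since otherwise $\chi'(G)=\W(G)$). Note $\Delta(Q)=\max_{v}\parens{d_G(v)-1}\cdot$(something)—more precisely, for an edge $uv$ of $G$ its degree in $Q$ is $d_G(u)+d_G(v)-2$, so $\Delta(Q)\le 2\Delta(G)-2$ but we must keep $\Delta(Q)$ as the relevant parameter. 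The first order of business is to establish two structural facts about $G$: (a) every long vertex $v$ satisfies $d_G(v)<\frac34\Delta(G)$, and (b) $G$ is $k$-thin, i.e.\ $\mu(G)<2k-d(x)-d(y)$ for all distinct long $x,y$.

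For (a): if some long vertex $v$ has $d_G(v)\ge\frac34\Delta(G)$, there is a Vizing fan at $v$ on $\ge 4$ vertices, say with leaves among neighbors; a long vertex forces a large fan, hence (via the standard Vizing-fan / Kempe-chain argument on a $k$-coloring of $G-e$ with $e\ni v$) a lower bound on $k$ in terms of $d_G(v)$ plus degrees of its fan-neighbors, which when $d_G(v)$ is large forces $k$ small relative to $\Delta(Q)$—contradicting $k+1>\frac{5\Delta(Q)+8}{6}$. (This is exactly the kind of bound used to motivate the remark "every long vertex $v$ has $d(v)<\frac34\Delta(G)$" in Section~\ref{sec:short}.) For (b): if $G$ is not $k$-thin, pick long $x,y$ with $\mu(G)\ge 2k-d(x)-d(y)$; since $d(x),d(y)<\frac34\Delta(G)$ this gives $\mu(G)>2k-\frac32\Delta(G)\ge 2(\Delta(G)+1)-\frac32\Delta(G)=\frac12\Delta(G)+2$, a large multiplicity, and then Theorem~\ref{CriticalMuBound} yields $\chi(Q)\le\max\set{\omega(Q),\Delta(Q)+1-\frac{\mu(G)-1}{2}}$, and one checks $\Delta(Q)+1-\frac{\mu(G)-1}{2}\le\frac{5\Delta(Q)+8}{6}$ using $\mu(G)\ge\Delta(Q)/3+O(1)$ (since $\mu(G)$ large relative to $\Delta(G)$ and $\Delta(Q)\le 2\Delta(G)$)—again a contradiction. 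So from now on $G$ is non-elementary, $k$-thin, critical with $k\ge\Delta(G)+1$, and every long vertex has degree $<\frac34\Delta(G)$.

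Now fix $(T,v_0v_1,\vph)\in\T(G)$ and consider $\nu(T)$. Since $V(T)$ is elementary (Tashkinov's Lemma), the colors missing at distinct vertices of $T$ are disjoint, so $T$ contains at least one vertex with $\ge k-\Delta(G)$ missing colors and each long vertex contributes $>k-\frac34\Delta(G)$ missing colors; summing over the $\nu(T)$ long vertices (plus the rest) gives $k\ge\sum_{v\in V(T)}\card{\vphn(v)}\ge\nu(T)\parens{k-\frac34\Delta(G)}$, forcing $\nu(T)\le 3$ (since $\nu(T)\ge4$ would need $4(k-\frac34\Delta(G))\le k$, i.e.\ $3k\le 3\Delta(G)$, impossible as $k\ge\Delta(G)+1$). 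By Lemma~\ref{ZeroNonSpecial} and Lemma~\ref{MasterHelper}, $\nu(T)\ge 1$, and when $\nu(T)\le 3$ we get three long vertices $z_1,z_2,z_3$ of $T$ in one of the two configurations of Lemma~\ref{MasterHelper}. The point of those configurations is that the "long-ness" of the $z_i$ relative to each other lets us \emph{improve} the counting: a vertex that is $\set{z_2,z_3}$-long, or a mutually-long pair, forces a Vizing fan / Tashkinov tree in which the relevant vertices miss \emph{strictly more} colors than the crude bound $k-\frac34\Delta(G)$—quantitatively one shows each such long vertex misses more than $\frac{k+2}{3}$ colors (this is where the constant $\frac56$ and the "$+8$" come from). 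Plugging $\card{\vphn(z_i)}>\frac{k+2}{3}$ for $i=1,2,3$ into $k\ge\sum_i\card{\vphn(z_i)}>k+2$ is a contradiction, completing the proof.

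The main obstacle is the last step: turning the combinatorial configurations of Lemma~\ref{MasterHelper} (one vertex long toward two others, or two mutually-long pairs) into the sharp numerical inequality $\card{\vphn(z_i)}>\frac{k+2}{3}$. This requires building, from a fan realizing the "$\set{z_2,z_3}$-long" property, a Tashkinov tree or a chain of Kempe swaps that packs three long vertices with disjoint large missing-color sets into $V(T)$, so that $k\ge\card{\vphn(z_1)}+\card{\vphn(z_2)}+\card{\vphn(z_3)}$ becomes $k>k+2$; the bookkeeping of exactly how many colors each $z_i$ must miss—and checking it beats $\frac{5\Delta(Q)+8}{6}$ after translating between $\Delta(G)$ and $\Delta(Q)$—is the delicate heart of the argument, and is presumably carried out in detail in Section~\ref{sec:final} and the remainder of Section~\ref{sec:thin}.
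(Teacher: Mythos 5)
Your overall architecture matches the paper's: minimal counterexample, criticality, $k\ge\Delta(G)+1$, the bound $d(x)<\tfrac34\Delta(G)$ for long vertices, $k$-thinness via Theorem~\ref{CriticalMuBound}, the deduction $\nu(T)\le 3$, and then Lemma~\ref{MasterHelper} to produce one of two configurations of long vertices. But the step you yourself flag as ``the delicate heart of the argument'' is precisely the content you have not supplied, and it is not deferred to Section~\ref{sec:final} or elsewhere --- it is the core of the proof of Theorem~\ref{mainhelper} itself. The paper's Claim~1 shows that if $F$ is a fan rooted at $x$ and $S\subseteq V(F)-x$ with $\card{S}\ge 3$, then $d(x)\le\frac1{5\card{S}-11}\parens{2\card{S}-12+\sum_{v\in S}d(v)}$; the derivation uses elementarity of $V(F)$, the translation $k\ge\frac56(d(x)+d(v)-\mu(xv))-\frac13$, and crucially the inequality $\sum_{v\in S}\mu(xv)\le d(x)$ to absorb the multiplicity terms. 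Claims~3 and~4 then bootstrap this: mutual longness of $x_1,x_2$ lets you substitute each bound into the other to get $d(x_i)\le\frac23\Delta(G)-2$, and the $\set{x_2,x_3}$-long configuration gives $d(x_1)\le\frac35\Delta(G)-\frac{34}{15}$ and $d(x_2)\le\frac{13}{20}\Delta(G)-\frac{61}{15}$. None of this appears in your proposal even in outline.

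Moreover, your proposed replacement for it --- showing each of the three long vertices misses more than $\frac{k+2}{3}$ colors --- is not what the paper proves and would fail in configuration (1) of Lemma~\ref{MasterHelper}. There the third vertex $z_3$ is merely long, so the only available bound is $d(z_3)\le\frac34\Delta(G)-1$, giving $\card{\vphn(z_3)}\ge k-\frac34\Delta(G)+1$, which can be well below $\frac{k+2}{3}$ (e.g.\ when $k$ is close to $\Delta(G)+1$). The paper instead balances the weak $\frac34\Delta(G)$ bound on $z_3$ against the stronger $\frac35\Delta(G)$ and $\frac{13}{20}\Delta(G)$ bounds on $z_1,z_2$, so that the total $\parens{\frac35+\frac{13}{20}+\frac34}\Delta(G)=2\Delta(G)$ still contradicts $2+\sum_i(k-d(x_i))\le k$ with $\Delta(G)\le k$. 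Without Claims~1, 3, and 4 and this asymmetric bookkeeping, the contradiction does not close, so the proposal as written has a genuine gap at its final and most important step.
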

\begin{proof}
Suppose the theorem is false and choose a counterexample minimizing $\card{Q}$.
Let $k = \max\set{\W(G), \Delta(G) + 1, \floor{\frac{5\Delta(Q) + 8}{6}}}$. Say $Q =
L(G)$ for a graph $G$. The minimality of $Q$ implies that
$G$ is critical and $\chi(Q) = k+1$, for some $k \ge \Delta(G) + 1$.

The heart of the proof is Claim~1, which roughly says that if $x$ is long, then
$d(x)<\frac34\Delta(G)$. Moreover, we can improve this bound further if $x$ is
the root of a long fan $F$ such that either (i) $F$ has length more than 3 or (ii)
some of the other vertices in $F$ have degree less than $\Delta(G)$.  The claims
thereafter are all essentially applications of Claim~1.
\bigskip

\claim{1}{Let $F$ be a fan rooted at $x$ with respect to a $k$-edge-coloring of
$G - xy$.  If $S\subseteq V(F)-x$ and $\card{S} \ge 3$, then
\[d(x) \le \frac1{5\card{S}-11}\parens{2\card{S}-12 + \sum_{v \in S} d(v)}.\]
In particular, if $\card{S}=3$, then $d(x) \le \frac1{4}\parens{-6 + \sum_{v \in S}
d(v)}.$}
\begin{claimproof}
Since $F$ is elementary, we have
\[2 + k-d(x) + \sum_{v \in S} k - d(v) \le k,\]
so
\[2 + \card{S}k \le d(x) + \sum_{v \in S} d(v).\]
Using $k \ge \frac56(\Delta(Q) + 1) - \frac13 \ge \frac56(d(x) + d(v) - \mu(xv))
- \frac13$ for each $v
\in S$, we get
\[2 + \sum_{v \in S}\parens{\frac56(d(x) + d(v) - \mu(xv)) -\frac13} \le d(x) +
\sum_{v \in S} d(v),\]
so multiplying by 6 and rearranging terms gives
\[12 + \parens{5\card{S} - 6}d(x) - 2\card{S} \le \sum_{v \in S} 5\mu(xv) + \sum_{v \in S} d(v).\]
Now $\sum_{v \in S} \mu(xv) \le d(x)$, so this implies
\[12 + \parens{5\card{S} - 11}d(x) - 2\card{S} \le \sum_{v \in S} d(v).\]
Solving for $d(x)$ gives
\[d(x) \le \frac1{5\card{S}-11}\parens{2\card{S}-12 + \sum_{v \in S} d(v)},\]
and when $\card{S} = 3$, we get
$d(x) \le \frac14\parens{-6 + \sum_{v \in S} d(v)}.$
\end{claimproof}
\bigskip

\claim{2}{If $x \in V(G)$ is long, then $d(x) \le \frac34\Delta(G) - 1$.}

\begin{claimproof}
This is immediate from Claim 1, since $d(v)\le \Delta(G)$ for all $v\in S$.
\end{claimproof}
\bigskip

\claim{3}{If $x_1x_2 \in E(G)$ such that $x_1$ is $x_2$-long and $x_2$ is
$x_1$-long, 
then
\[d(x_i) \le \frac23\Delta(G) -2 \text{ for all $i \in \irange{2}$.}\]}

\begin{claimproof}
By Claim 1, for each $i \in \irange{2}$,
\[d(x_i) \le \frac14\parens{-6 + \sum_{v \in S} d(v)} \le \frac14\parens{-6 + d(x_{3-i}) + 2\Delta(G)},\]
Substituting the bound on $d(x_{3-i})$ into that on $d(x_i)$ and simplifying
gives for each $i \in \irange{2}$,
\[d(x_i) \le -2 + \frac23\Delta(G).\]
\end{claimproof}

\bigskip

\claim{4}{If $x_1x_2, x_1x_3 \in E(G)$ such that $x_1$ is $\set{x_2,x_3}$-long, $x_2$ is $x_1$-long and $x_3$ is long, then 
\[d(x_1) \le -\frac85 + \frac35\Delta(G),\]
\[d(x_2) \le -\frac75 + \frac{13}{20}\Delta(G).\]}

\begin{claimproof}
By Claim 1, we have
\[d(x_1) \le \frac14\parens{-6 + \sum_{v \in S} d(v)} \le \frac14\parens{-6 + d(x_2) + d(x_3) + \Delta(G)},\]
\[d(x_2) \le \frac14\parens{-6 + \sum_{v \in S} d(v)} \le \frac14\parens{-6 + d(x_1) + 2\Delta(G)}.\]
By the same calculation as in Claim~3, these together imply
\[d(x_1) \le -2 + \frac25\Delta(G) + \frac{4}{15}d(x_3).\]
Since $x_3$ is long, using Claim 2, we get
\[d(x_1) \le -\frac{34}{15} + \frac35\Delta(G),\]
and hence
\[d(x_2) \le -\frac{61}{15} + \frac{13}{20}\Delta(G).\]
\end{claimproof}
\bigskip

\claim{5}{The theorem is true.}

\begin{claimproof}
Let $(T, v_0v_1, \vph) \in \T(G)$. By Lemma \ref{MasterHelper}, one of the following holds:
\begin{enumerate}
\item $G$ is elementary; or
\item $G$ is not $k$-thin; or
\item $\nu(T) = 3$ and $V(T)$ contains vertices $x_1,x_2,x_3$ such that $x_1$
is $x_2$-long, $x_2$ is $x_1$-long, $x_2$ is $x_3$-long, and $x_3$ is $x_2$-long; or
\item $\nu(T) = 3$ and $V(T)$ contains vertices $x_1,x_2,x_3$ such that $x_1$ is
$\set{x_2,x_3}$-long, $x_2$ is $x_1$-long, and $x_3$ is long; or
\item $V(T)$ contains four long vertices $x_1, x_2, x_3, x_4$.
\end{enumerate}

If (1) holds, then $\chi(Q) = \W(G)$, which contradicts our choice of
$Q$ as a counterexample.

If (2) holds, then Claim 2 implies that $\mu(G) \ge 2k - \frac32\Delta(G) + 2$. 
Now Theorem \ref{CriticalMuBound} gives
\begin{align*}
k + 1 &\le \Delta(Q)+1-\frac{2k-\frac32\Delta(G)+2}2\\
&=\Delta(Q) + 1 - k +
\frac34\Delta(G) - 1,
\end{align*}
so
\[2(k + 1) \le \Delta(Q) + 1 + \frac34\Delta(G).\]

Substituting $\Delta(G) \le k$ and solving for $k$ gives

\[k  \le \frac45\Delta(Q) - \frac45 < \frac56\Delta(Q)+\frac12 \le k,\]
which is a contradiction.

Suppose (3) holds.  
Now \[2 + \sum_{i \in \irange{3}} k - d(x_i) \le k,\]
so Claim 3 implies
\[3\parens{\frac23\Delta(G) -2} \ge 2k+2,\]
which is a contradiction, since $\Delta(G) \le k$.

Suppose (4) holds.  Now
\[2 + \sum_{i \in \irange{3}} k - d(x_i) \le k,\]
so Claims 2 and 4 give
\[ \parens{\frac35 + \frac{13}{20} + \frac34}\Delta(G)-\parens{\frac{34}{15} + \frac{16}{15} + 1}\ge 2k+2,\]
which is
\[2\Delta(G) -\frac{13}3\ge 2k+2,\]
again a contradiction, since $\Delta(G) \le k$.

So (5) must hold.  But now
\[2 + \sum_{i \in \irange{4}} k - d(x_i) \le k,\]
so using Claim 2 gives
\[4\parens{\frac34\Delta(G) - 1} \ge 3k+2,\]
a contradiction since $\Delta(G) \le k$.
\end{claimproof}

This finishes the final case of Claim~5, which proves the theorem.
\end{proof}

In the previous theorem, we showed that 
$\chi(Q) \le \max\set{\W(Q), \Delta(G) + 1, \frac{5\Delta(Q) +
8}{6}}$.  Now we show that if the maximum is attained by the second argument
(and $Q$ is vertex critical), then $Q$ satisfies the $\frac56$-Conjecture.
We use the following lemma, which is implicit in~\cite{rabern2011strengthening};
see the proof of Lemma~9 therein.

\begin{lem}\label{CriticalMuBoundOtherWay}
If $Q$ is the line graph of a graph $G$ and $Q$ is vertex critical, then
\[\chi(Q) \leq \max\left\{\Delta(G), \Delta(Q) + 1 + 2\mu(G) - \Delta(G)\right\}.\]
\end{lem}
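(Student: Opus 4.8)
The plan is to reduce the inequality to a structural statement about $G$ and then analyze a maximal Vizing fan at a vertex of maximum degree. Since $Q=L(G)$ is vertex critical, $G$ is edge critical; set $k=\chi(Q)-1=\chi'(G)-1$. If $\chi'(G)\le\Delta(G)$ the bound is trivial, so I assume $\chi'(G)\ge\Delta(G)+1$, hence $k\ge\Delta(G)$. As $d_{L(G)}(uv)=d_G(u)+d_G(v)-2$ for every $uv\in E(G)$, it suffices to exhibit an edge $uv\in E(G)$ with $d(u)+d(v)\ge k+\Delta(G)-2\mu(G)+2$, since that gives $\Delta(Q)\ge k+\Delta(G)-2\mu(G)$ and hence $\chi(Q)=k+1\le\Delta(Q)+1+2\mu(G)-\Delta(G)$. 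To this end, fix $x$ with $d(x)=\Delta(G)$, an edge $e=xy_0$ at $x$, a $k$-edge-coloring $\vph$ of $G-e$ (which exists since $\chi'(G-e)\le k$), and a \emph{maximal} Vizing fan $F$ rooted at $x$ with respect to $\vph$ and $e$; let $S\subseteq N_G(x)$ be its leaf set, so $V(F)=\{x\}\cup S$. The set $V(F)$ is elementary (the classical Vizing fan lemma; a special case of Tashkinov's Lemma when $k\ge\Delta(G)+1$), so summing missing colors over $V(F)$, using that $x$ and $y_0$ both carry the uncolored edge $e$ so that $\card{\vphn(x)}=k-\Delta(G)+1$, gives
\[
\sum_{v\in S}\parens{k-d(v)}\le\Delta(G)-2 .
\]

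The first case is that some neighbor of $x$ lies outside $V(F)$. Then maximality of $F$ forces every edge from $x$ to a vertex outside $V(F)$ to be colored with a color that is seen at $x$ and at every leaf of $F$. The number of such ``leftover'' colors is exactly $\Delta(G)-2-\sum_{v\in S}(k-d(v))$, while the number of edges from $x$ leaving $V(F)$ is $\Delta(G)-\sum_{v\in S}\mu(xv)$; since distinct edges at $x$ receive distinct colors, comparing these counts gives $\sum_{v\in S}\parens{k-d(v)-\mu(xv)}\le-2$. Because $k-d(v)\ge 0$ for each $v\in S$ (as $k\ge\Delta(G)\ge d(v)$) and $\mu(xv)\le\mu(G)$, some leaf $v$ has $d(v)\ge k-\mu(G)+1$; then the fan edge $xv$ satisfies $d(x)+d(v)\ge\Delta(G)+k-\mu(G)+1\ge k+\Delta(G)-2\mu(G)+2$, as wanted.

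The remaining case is $V(F)\supseteq\{x\}\cup N_G(x)$, so $\card{S}=\card{N_G(x)}\ge\Delta(G)/\mu(G)$ (each neighbor of $x$ absorbs at most $\mu(G)$ of the $\Delta(G)$ edges at $x$). Since $xv\in E(G)$ gives $d(v)\le\Delta(Q)+2-\Delta(G)$, and $\sum_{v\in S}d(v)\le\card{S}\cdot\max_{v\in S}d(v)$, the displayed inequality forces $\max_{v\in S}d(v)\ge k-(\Delta(G)-2)/\card{S}\ge k-(\Delta(G)-2)\mu(G)/\Delta(G)$. Taking $v^\ast\in S$ attaining the maximum, a short computation shows $d(x)+d(v^\ast)\ge k+\Delta(G)-2\mu(G)+2$ whenever $\mu(G)\ge 2$; and when $\mu(G)=1$ the graph is simple with $\card{S}=\Delta(G)$, so $(\Delta(G)-2)/\card{S}<1$ forces $\max_{v\in S}d(v)\ge k$, giving $d(x)+d(v^\ast)\ge\Delta(G)+k=k+\Delta(G)-2\mu(G)+2$. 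In both subcases we obtain the required edge.

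The only substantive ingredient is the elementarity of $V(F)$; everything else is bookkeeping with $\sum_v\mu(xv)\le d(x)$ and $d(u)+d(v)-2\le\Delta(Q)$. The step I expect to need the most care is the ``leftover color'' count in the first case — correctly identifying which colors may occur on the edges of $x$ that leave $F$, using maximality — together with making sure the Vizing fan elementarity statement is invoked in a form valid also when $k=\Delta(G)$, not only when $k\ge\Delta(G)+1$.
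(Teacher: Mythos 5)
Your overall strategy -- root a maximal Vizing fan at a vertex $x$ of maximum degree, use elementarity and maximality to count leftover colors against edges leaving the fan, and extract an edge at $x$ whose line-graph degree forces $\Delta(Q)$ to be large -- is the right one; the paper does not prove this lemma itself (it cites the proof of Lemma~9 of \cite{rabern2011strengthening}), and your first case is in effect a correct derivation of the standard Vizing fan inequality $\sum_{v\in S}\parens{d(v)+\mu(xv)-k}\ge 2$. The elementarity bookkeeping ($\sum_{v\in S}(k-d(v))\le \Delta(G)-2$) is also correct, and your worry about invoking fan elementarity when $k=\Delta(G)$ is resolved by the classical multifan lemma, which needs only $k\ge\Delta(G)$.

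There is, however, a genuine error in the reduction you set up at the outset: for a multigraph, $d_{L(G)}(uv)=d_G(u)+d_G(v)-\mu(uv)-1$, not $d_G(u)+d_G(v)-2$, since the $\mu(uv)-1$ edges parallel to $uv$ are common neighbours of $u$ and $v$ in $L(G)$ and must not be counted twice (the paper uses this formula explicitly, e.g.\ in the proofs of Theorem~\ref{mainhelper} and Corollary~\ref{CriticalElementary56}). Consequently, an edge with $d(u)+d(v)\ge k+\Delta(G)-2\mu(G)+2$ only gives $\Delta(Q)\ge k+\Delta(G)-2\mu(G)+1-\mu(uv)$, which falls short as soon as $\mu(uv)\ge 2$ -- and the multigraph case is the entire content of the lemma; the $2\mu(G)$ in the statement arises precisely from the $+\mu(xv)$ in the fan inequality meeting the $-\mu(xv)$ in the correct degree formula. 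The good news is that your fan analysis proves more than the degree-sum target, so the repair is immediate: in your first case you actually obtain a leaf $v$ with $d(v)+\mu(xv)\ge k+1$, whence $d_Q(xv)=\Delta(G)+d(v)-\mu(xv)-1\ge \Delta(G)+k-2\mu(xv)\ge \Delta(G)+k-2\mu(G)$; in your second case you obtain $d(v^\ast)\ge k-\mu(G)+1$, and $\mu(xv^\ast)\le\mu(G)$ yields the same bound. So the lemma does follow from what you establish, but the proof as written does not close; the inequality you should reduce to is $d(u)+d(v)-\mu(uv)\ge k+\Delta(G)-2\mu(G)+1$.
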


\begin{cor}
If $Q$ is the line graph of a critical graph $G$ and $\chi(Q) \le \Delta(G) + 1$, then
\[\chi(Q) \le \max\set{\omega(Q), \frac{5\Delta(Q) + 8}{6}}.\]
\label{mainCorHelper}
\end{cor}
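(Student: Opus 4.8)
The plan is to argue by contradiction: suppose $\chi(Q) > \max\{\omega(Q), \tfrac{5\Delta(Q)+8}{6}\}$ and show this is impossible. Since $G$ is critical, $Q=L(G)$ is vertex critical, because deleting from $Q$ the vertex corresponding to an edge $e$ of $G$ yields $L(G-e)$, and $\chi(L(G-e))=\chi'(G-e)<\chi'(G)=\chi(Q)$; this lets me invoke both Theorem~\ref{CriticalMuBound} and Lemma~\ref{CriticalMuBoundOtherWay}. I also use the standing fact $\omega(Q)\ge\Delta(G)$: the edges of $G$ incident to a fixed vertex of degree $\Delta(G)$ (parallel copies included) form a clique in $Q$. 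From $\chi(Q)>\omega(Q)\ge\Delta(G)$ we get $\chi(Q)\ge\Delta(G)+1$, and combined with the hypothesis $\chi(Q)\le\Delta(G)+1$ this pins down $\chi(Q)=\Delta(G)+1$.

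Next I would extract two opposing estimates on $\mu(G)$. Because $\chi(Q)>\omega(Q)$, Theorem~\ref{CriticalMuBound} gives $\chi(Q)\le\Delta(Q)+1-\tfrac{\mu(G)-1}{2}$, which rearranges to an \emph{upper} bound on $\mu(G)$ in terms of $\Delta(Q)$ and $\chi(Q)$. Because $\chi(Q)=\Delta(G)+1>\Delta(G)$, Lemma~\ref{CriticalMuBoundOtherWay} gives $\chi(Q)\le\Delta(Q)+1+2\mu(G)-\Delta(G)$, and substituting $\Delta(G)=\chi(Q)-1$ turns this into a \emph{lower} bound on $\mu(G)$. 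Feeding the lower bound into the upper bound eliminates $\mu(G)$ and, after routine rearrangement, yields $6\chi(Q)\le 5\Delta(Q)+8$, i.e.\ $\chi(Q)\le\tfrac{5\Delta(Q)+8}{6}$, contradicting the assumption. Hence $\chi(Q)\le\max\{\omega(Q),\tfrac{5\Delta(Q)+8}{6}\}$.

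I do not expect a serious obstacle; the whole statement drops out of combining the two multiplicity bounds proved earlier. The only genuinely non-mechanical point is to see that the hypothesis $\chi(Q)\le\Delta(G)+1$ is precisely what activates Lemma~\ref{CriticalMuBoundOtherWay}: it forces $\chi(Q)=\Delta(G)+1$, so $\Delta(G)$ is large, so $\mu(G)$ must be large; simultaneously $\chi(Q)>\omega(Q)$ forces $\mu(G)$ to be small via Theorem~\ref{CriticalMuBound}, and these clash unless $\chi(Q)$ already satisfies the $\tfrac56$ bound. Everything else is the linear algebra of two inequalities plus the two small observations that $Q$ is vertex critical and that $\omega(Q)\ge\Delta(G)$.
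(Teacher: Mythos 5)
Your proposal is correct and follows essentially the same route as the paper: assume $\chi(Q)>\omega(Q)$, use Lemma~\ref{CriticalMuBoundOtherWay} with $\chi(Q)=\Delta(G)+1$ to get the lower bound $\mu(G)\ge k-\frac{\Delta(Q)}{2}$, feed it into Theorem~\ref{CriticalMuBound} to get $\chi(Q)\le\frac{5\Delta(Q)+8}{6}$. Your explicit checks that $Q$ is vertex critical and that $\omega(Q)\ge\Delta(G)$ (so the relevant branch of each max applies) are details the paper leaves implicit, but the argument is the same.
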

\begin{proof}
Let $k +1=\chi(Q) \le \Delta(G)+1$.  Suppose $\chi(Q) > \omega(Q)$.  Now Lemma
\ref{CriticalMuBoundOtherWay} gives
\[k + 1 = \chi(Q) \le \Delta(Q) + 1 + 2\mu(G) - k,\]
so solving for $\mu(G)$ gives
\[\mu(G) \ge k - \frac{\Delta(Q)}{2}.\]
Applying Theorem \ref{CriticalMuBound} gives
\[k+1 = \chi(Q) \le \Delta(Q) + 1 - \frac{k - \frac{\Delta(Q)}{2} - 1}{2},\]
and solving for $k+1$ yields
\[\chi(Q) = k+1 \le \frac56\Delta(Q) + \frac43 = \frac{5\Delta(Q) + 8}{6}.\]
\aftermath
\end{proof}

Since $\omega(Q) \le \max\{\Delta(G),\W(G)\}$, Theorem~\ref{mainhelper} and
Corollary~\ref{mainCorHelper} together imply the following.

\begin{cor}
If $Q$ is the line graph of a graph $G$, then
\[\chi(Q) \le \max\set{\Delta(G),\W(G), \frac{5\Delta(Q) + 8}{6}}.\]
\label{mainCor}
\end{cor}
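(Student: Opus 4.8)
The plan is to derive Corollary~\ref{mainCor} as an immediate consequence of Theorem~\ref{mainhelper} and Corollary~\ref{mainCorHelper}, bridging the two via the single observation $\omega(Q) \le \max\{\Delta(G), \W(G)\}$. So the first thing I would do is verify that inequality: a clique in $Q = L(G)$ corresponds to a set of pairwise-incident edges of $G$, which is either a ``triangle'' (at most $3$ edges, but then $\W(G) \ge 3$ on that triangle, since a triangle has $3$ edges and $\lfloor 3/2\rfloor = 1$) or a ``star'' at some vertex $v$ (at most $d(v) \le \Delta(G)$ edges). Hence every clique of $Q$ has size at most $\max\{\Delta(G), \W(G)\}$, giving $\omega(Q) \le \max\{\Delta(G), \W(G)\}$.

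Next I would reduce to the vertex-critical case. Pass to a vertex-critical induced subgraph $Q'$ of $Q$ with $\chi(Q') = \chi(Q)$; then $Q' = L(G')$ for a suitable subgraph $G'$ of $G$ (obtained by deleting the edges of $G$ not corresponding to vertices of $Q'$, and discarding isolated vertices), and we may further take $G'$ to be critical in the edge-coloring sense by deleting edges whose removal does not drop $\chi'(G') = \chi(Q')$. Note $\Delta(G') \le \Delta(G)$ and $\Delta(Q') \le \Delta(Q)$, and likewise $\W(G') \le \W(G)$ since every subgraph of $G'$ is a subgraph of $G$. So it suffices to bound $\chi(Q')$ in terms of the primed parameters and then use monotonicity. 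This step is mostly bookkeeping, though one must be slightly careful that the line-graph structure is preserved under the reductions.

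Now apply Theorem~\ref{mainhelper} to $Q' = L(G')$: we get $\chi(Q') \le \max\{\W(G'), \Delta(G') + 1, \tfrac{5\Delta(Q')+8}{6}\}$. If the maximum is realized by $\W(G')$ or by $\tfrac{5\Delta(Q')+8}{6}$, we are done (after reverting to unprimed parameters). The remaining case is $\chi(Q') \le \Delta(G') + 1$, and here I invoke Corollary~\ref{mainCorHelper}, whose hypotheses are exactly that $Q'$ is the line graph of a critical graph $G'$ and $\chi(Q') \le \Delta(G') + 1$; it yields $\chi(Q') \le \max\{\omega(Q'), \tfrac{5\Delta(Q')+8}{6}\}$. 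Combining with the clique bound $\omega(Q') \le \max\{\Delta(G'), \W(G')\}$ and monotonicity of all the parameters finishes the proof.

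The only real subtlety — and hence the step I would expect to need the most care — is the reduction to a \emph{critical} line graph in the correct sense: we want $G'$ critical for edge-coloring so that Corollary~\ref{mainCorHelper} applies, but we also want $Q' = L(G')$ to still be \emph{vertex}-critical so that (implicitly, via Corollary~\ref{mainCorHelper}'s internal use of Theorem~\ref{CriticalMuBound} and Lemma~\ref{CriticalMuBoundOtherWay}) everything goes through; reconciling these two notions of criticality, and checking they can be achieved simultaneously without increasing $\Delta(G)$, $\Delta(Q)$, or $\W(G)$, is where the argument could slip. Everything else is a short case split on which term of Theorem~\ref{mainhelper}'s maximum dominates, plus the elementary clique bound.
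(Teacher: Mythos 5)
Your proposal is correct and follows essentially the same route as the paper: reduce to an (edge-)critical $G$, apply Theorem~\ref{mainhelper}, fall back on Corollary~\ref{mainCorHelper} when $\chi(Q)\le\Delta(G)+1$, and close with the observation $\omega(Q)\le\max\{\Delta(G),\W(G)\}$. The criticality worry you flag resolves immediately, since edge-criticality of $G$ is the same as vertex-criticality of $L(G)$; the only tiny imprecision is that a ``triangle'' clique in $L(G)$ of a multigraph may have more than $3$ edges, but the bound $\W(G)\ge\size{H}$ for a three-vertex subgraph $H$ covers that case anyway.
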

\begin{proof}
Let $Q$ be the line graph of a graph $G$.  
We assume that $G$ is critical.  
If not, then choose $\widehat{G}\subseteq G$ such that $\widehat{G}$ is critical
and $\chi'(\widehat{G})=\chi'(G)$.  Let $\widehat{Q}\DefinedAs L(\widehat{G})$. 
Now $\chi(Q)=\chi(\widehat{Q}) \le
\max\{\Delta(\widehat{Q}),\W(\widehat{Q}),\frac{5\Delta(\widehat{Q})+8}6\}
\le\max\{\Delta(Q),\W(Q),\frac{5\Delta(Q)+8}6\}$, as desired.

If $\chi(Q) >\Delta(G)+1$, then Theorem~\ref{mainhelper} implies that
$\chi(Q)\le\max\{\W(G),\frac{5\Delta(Q)+8}6\}$.  Otherwise, $\chi(Q)\le
\Delta(G)+1$; since $G$ is critical, Corollary~\ref{mainCorHelper} implies 
$\chi(Q)\le\{\omega(Q),\frac{5\Delta(Q)+8}6\}$.  Since
$\omega(G)\le\max\{\Delta(G),\W(G)\}$, the result follows.
\end{proof}

\section{The $\frac56$-Conjecture}
\label{sec:final}

In this section, we prove our main result, Theorem~\ref{thm:main}, 
that $\chi(Q)\le\max\{\omega(Q),\frac{5\Delta(Q)+8}6\}$, when $Q$ is the line
graph of a graph $G$.
Throughout, we may assume that $Q$ is a minimal counterexample, so $Q$ is vertex
critical.
Observe that if $\chi(Q)\le \Delta(G)+1$, then the result follows immediately
from Corollary~\ref{mainCorHelper}.  Thus, we may assume that
$\chi(Q)>\Delta(G)+1$.  Now, in view of Corollary~\ref{mainCor}, it suffices to
show that $\chi(Q)=\chi'(G)\le \W(G)$ implies
$\chi(Q)\le\max\{\omega(Q),\frac{5\Delta(Q)+8}6\}$.

Our approach is to show that if $Q$ is a minimal counterexample and $Q$ is the
line graph of $G$, then $\card{N(x)}=2$ for nearly every vertex $x\in V(G)$.  This
implies that the simple graph underlying $G$ is very close to a cycle.
By Corollary~\ref{mainCor}, we can assume that $G$ is elementary, i.e.,
$\chi(Q)=\chi'(G)=W(G)$.  Thus
$\chi(Q)=\ceil{\frac{2\card{E(G)}}{\card{V(G)}-1}}\ge\Delta(G)+2$.
So, to show that $\card{N(x)}=2$ for nearly every $x\in V(G)$, it suffices to show
that when $\card{N(x)}\ge 3$, we have $d(x)\le \frac35\Delta(G)$.  This is an
immediate consequence of Lemma~\ref{DegreeBoundedForMiddling}, which is the key
technical result that we prove in this section.

It is helpful to note that in the present section the only results we use from
previous sections are Corollary~\ref{mainCorHelper} and Corollary~\ref{mainCor}
(as well as Theorem~\ref{CriticalMuBound}, which is proved
in~\cite{rabern2011strengthening}).
In Lemmas~\ref{Slacked}--\ref{DegreeBoundedForMiddling},
we prove bounds on $\card{N(x)}$ for each $x\in V(G)$.  Ultimately, these lemmas
yield Corollary~\ref{CriticalElementary56}, which says that $\card{N(x)}\ge 3$
for at most one vertex $x\in V(G)$.  This corollary plays a key role in the
proof of our main result, Theorem~\ref{thm:main}.

For reference, we record the following proposition, which is Proposition~1.3
in~\cite{SSTF}.
\begin{prop}
\label{easy-prop}
Let $G$ be a graph that is critical and elementary, with $\chi'(G)=k+1$ for some
integer $k\ge \Delta(G)$.  Now $\size{G}$ is odd and
$k=\frac{2(\size{G}-1)}{\card{G}-1}$.
\end{prop}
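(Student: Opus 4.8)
The plan is to pin down $\size G$ exactly as a function of $k$ and $\card G$, using only the definition of $\W$ together with the criticality of $G$, and then read off both claims.

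First I would show that the maximum defining $\W(G)$ is attained by $G$ itself. Since $G$ is elementary, $k+1=\chi'(G)=\W(G)$, so some $H\subseteq G$ satisfies $\ceil{\size H/\floor{\card H/2}}=k+1$. If $E(H)\neq E(G)$, pick $e\in E(G)\setminus E(H)$; then $H\subseteq G-e$, so $\chi'(G-e)\ge\chi'(H)\ge\W(H)\ge\ceil{\size H/\floor{\card H/2}}=k+1=\chi'(G)$, contradicting criticality. And if $E(H)=E(G)$ then, since a critical graph is connected, every vertex of $G$ is an endpoint of an edge, forcing $V(H)=V(G)$ and hence $H=G$. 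Thus $k+1=\ceil{\size G/\floor{\card G/2}}$, so $k\floor{\card G/2}<\size G$.

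Next I would extract the opposite inequality from criticality: for any edge $e$ we have $\chi'(G-e)\le k$, and in a proper $k$-edge-colouring of $G-e$ each colour class is a matching, of size at most $\floor{\card G/2}$, so $\size G-1=\size{G-e}\le k\floor{\card G/2}$. Together with the previous step this forces $\size G=k\floor{\card G/2}+1$. A degree-sum count then settles the parity of $\card G$: we have $\sum_{v\in V(G)}d(v)=2\size G=2k\floor{\card G/2}+2$, and if $\card G$ were even this would equal $k\card G+2>k\card G\ge\card G\cdot\Delta(G)\ge\sum_v d(v)$, a contradiction. Hence $\card G$ is odd, so $\floor{\card G/2}=(\card G-1)/2$ and $\size G=k(\card G-1)/2+1$, which rearranges to
\[k=\frac{2(\size G-1)}{\card G-1}.\]

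It remains to settle the parity of $\size G$, equivalently of $k(\card G-1)/2$. Here I would use the refined identity $\sum_v(k-d(v))=k\card G-2\size G=k-2$ coming out of the edge count, together with the fact that $\sum_v d(v)$ is even and that $\card G-1$ is even, to control the parities of $k$ and of $(\card G-1)/2$ simultaneously. This last step — reconciling these parities so that $\size G$ comes out with the asserted parity — is the only delicate point; everything before it is bookkeeping with the matching bound and the definition of $\W$, so I expect the parity argument for $\size G$ to be the main obstacle and the place where the hypothesis $k\ge\Delta(G)$ (as opposed to merely $\chi'(G)=\W(G)$) is really pulling its weight.
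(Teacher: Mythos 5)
Your derivation of the two facts that the paper actually needs --- that $\card{G}$ is odd and that $k=\frac{2(\size{G}-1)}{\card{G}-1}$ --- is correct and complete. (The paper gives no proof of this proposition; it is quoted from Proposition~1.3 of Stiebitz, Scheide, Toft, and Favrholdt, and your argument is the natural one: criticality forces the maximum defining $\W(G)$ to be attained by $G$ itself, the matching bound applied to a $k$-edge-coloring of $G-e$ pins down $\size{G}=k\floor{\card{G}/2}+1$, and the degree-sum comparison using $k\ge\Delta(G)$ rules out $\card{G}$ even. Your handling of the case $E(H)=E(G)$ with $V(H)\subsetneq V(G)$ via connectivity of critical graphs is a detail worth keeping.)

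The step you could not finish is unfinishable: the assertion that $\size{G}$ is odd is false as written and is a typo for ``$\card{G}$ is odd.'' For a counterexample, let $G$ be the triangle with each edge doubled: then $\card{G}=3$, $\size{G}=6$, $\Delta(G)=4$, and $\chi'(G)=\W(G)=6$, so $G$ is critical and elementary with $k=5\ge\Delta(G)$, yet $\size{G}$ is even (while $k=\frac{2(6-1)}{3-1}=5$, consistent with the formula). Every later invocation of the proposition in the paper (in the proofs of Lemma~\ref{Slacked}, Theorem~\ref{EasyBound2}, and Corollary~\ref{CriticalElementary56}) reads it as ``$\card{G}$ is odd,'' which is exactly what you proved. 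So your proof is already complete for the intended statement; delete the final paragraph about the parity of $\size{G}$ rather than trying to repair it --- no bookkeeping with $\sum_v(k-d(v))=k-2$ can establish a false parity.
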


\begin{lem}
\label{Slacked}
Let $G$ be a critical, elementary graph with $\chi'(G) = k + 1$ where $k \ge \Delta(G) + 1$.  Put $Q \DefinedAs L(G)$.  
If $k = \epsilon\parens{\Delta(Q) + 1} + \beta$, then for all $x \in V(G)$,
\[\card{N(x)} = \frac{\epsilon\parens{\card{G} - \Delta(G) - d_G(x) - 1 + S_1 + S_2 + S_3\parens{\card{G} - 1}}}{(1-\epsilon)\Delta(G) - \epsilon d_G(x) + 1 - \beta + S_3},\]
where 
\[S_1 \DefinedAs \sum_{v \in N(x)} \Delta(Q) - d_Q(xv),\]
\[S_2 \DefinedAs 2 + \sum_{v \in V(G) \setminus N(x)} \Delta(G) - d_G(v),\]
\[S_3 \DefinedAs k - (\Delta(G) + 1).\]
\end{lem}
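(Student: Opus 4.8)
The plan is to reduce the stated formula to a single counting identity for $2\size{G} = \sum_{v\in V(G)} d_G(v)$. Since $G$ is critical and elementary, Proposition~\ref{easy-prop} applies and gives that $\size{G}$ is odd with $k = \frac{2(\size{G}-1)}{\card{G}-1}$, equivalently $\sum_{v\in V(G)} d_G(v) = k(\card{G}-1) + 2$. On the other hand, partition $V(G)$ into the disjoint blocks $\set{x}$, $N(x)$, and $V(G)\setminus(N(x)\cup\set{x})$; these are disjoint because $G$ has no loops, so $x\notin N(x)$. The last block has $\card{G} - \card{N(x)} - 1$ vertices, and since $S_2 = 2 + \sum_{v\in V(G)\setminus N(x)}\parens{\Delta(G) - d_G(v)}$ accounts for the degree deficiencies over all of $V(G)\setminus N(x)$ (which includes $x$), the degree sum over the last block equals $(\card{G}-\card{N(x)}-1)\Delta(G) - \parens{S_2 - 2 - \Delta(G) + d_G(x)}$.

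The block over $N(x)$ is handled by the line-graph degree identity $d_Q(xv) = d_G(x) + d_G(v) - \mu(xv) - 1$ for each $v\in N(x)$, which follows by inclusion--exclusion ($d_G(x)-1$ edges meet $xv$ at $x$, $d_G(v)-1$ meet it at $v$, and $\mu(xv)-1$ parallel copies are counted twice). Solving for $d_G(v)$, summing over $v\in N(x)$, and using $\sum_{v\in N(x)}\mu(xv) = d_G(x)$ together with $S_1 = \card{N(x)}\Delta(Q) - \sum_{v\in N(x)} d_Q(xv)$ writes $\sum_{v\in N(x)} d_G(v)$ purely in terms of $\card{N(x)}$, $d_G(x)$, $\Delta(Q)$, and $S_1$. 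Substituting this and the previous block into $\sum_{v\in V(G)} d_G(v) = k(\card{G}-1) + 2$, the two constant terms equal to $2$ cancel, and collecting the coefficient of $\card{N(x)}$ yields the linear equation $\card{N(x)}\parens{\Delta(Q) + 1 - d_G(x) - \Delta(G)} = k(\card{G}-1) - d_G(x) - \card{G}\Delta(G) + S_1 + S_2$.

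It then remains to rewrite the resulting quotient in the claimed form by multiplying numerator and denominator by $\epsilon$. From $k = \epsilon(\Delta(Q)+1) + \beta$ we get $\epsilon(\Delta(Q)+1) = k - \beta$, so the denominator becomes $k - \beta - \epsilon d_G(x) - \epsilon\Delta(G)$, and substituting $S_3 = k - \Delta(G) - 1$ turns this into $(1-\epsilon)\Delta(G) - \epsilon d_G(x) + 1 - \beta + S_3$. For the numerator, substitute $k = \Delta(G) + 1 + S_3$ into the term $k(\card{G}-1)$: then $k(\card{G}-1) - d_G(x) - \card{G}\Delta(G) + S_1 + S_2$ collapses to $\card{G} - \Delta(G) - d_G(x) - 1 + S_1 + S_2 + S_3(\card{G}-1)$, and multiplying by $\epsilon$ produces exactly the claimed numerator. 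I expect the only real obstacle to be bookkeeping: deriving and applying the line-graph degree identity correctly, and tracking which vertices fall in which block (in particular that $x\in V(G)\setminus N(x)$, which is what makes the two ``$+2$'' contributions cancel). There is no conceptual difficulty; criticality is used only through Proposition~\ref{easy-prop}, and $\epsilon$ and $\beta$ enter solely at the end as a change of variables.
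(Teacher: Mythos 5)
Your proposal is correct and follows essentially the same route as the paper: both rest on the identity $k=\frac{2(\size{G}-1)}{\card{G}-1}$ from Proposition~\ref{easy-prop}, the degree-sum decomposition of $V(G)$ relative to $N(x)$ that produces $S_2$, the line-graph degree formula $d_Q(xv)=d_G(x)+d_G(v)-\mu(xv)-1$ together with $\sum_{v\in N(x)}\mu(xv)=d_G(x)$, and a final linear solve for $\card{N(x)}$ with the change of variables $k=\epsilon(\Delta(Q)+1)+\beta$, $S_3=k-\Delta(G)-1$. The only difference is organizational (you substitute the line-graph identity into a single degree-sum equation, while the paper keeps $P=\sum_{v\in N(x)}d_G(v)$ as an intermediate and combines two equations), and your algebra checks out.
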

\begin{proof}
The details of the proof are somewhat tedious, however, the idea is simple.  We
write down a system of equations and solve for $\card{N(x)}$.  The main
insight needed is knowing which quantities to consider.  We use $\epsilon$,
$\beta$, $S_1$, $S_2$, $S_3$, and $\sum_{v\in N(x)}d_G(v)$.

Since $G$ is critical and elementary, Proposition~\ref{easy-prop} implies that
$\card{G}$ is odd and
\begin{equation}
\label{eq1}
k = \frac{2(\size{G} - 1)}{\card{G} - 1}.
\end{equation}
Choose $x \in V(G)$ and put $M \DefinedAs \card{N(x)}$ and 
\[P \DefinedAs \sum_{v \in N(x)} d_G(v).\] 
Grouping edges by whether or not they are incident to any neighbor of $x$ gives
\begin{equation}\label{eq2}
2(\size{G} - 1) = \Delta(G)(\card{G} - M) - S_2 + P.
\end{equation}
By~\eqref{eq1}, and by the definition of $S_3$,
\[\frac{2(\size{G} - 1)}{\card{G} - 1} = k = \Delta(G) + 1 + S_3.\]
Now clearing the denominator and using \eqref{eq2} to substitute, we get
\[P = (\card{G} - 1)(\Delta(G) + 1 + S_3) - \Delta(G)(\card{G} - M) + S_2.\]
After regrouping terms, this is
\begin{equation}\label{eq3}
P = \Delta(G)(M-1) + \card{G} - 1 + S_2 + S_3(\card{G} - 1).
\end{equation}
Using $k = \epsilon\parens{\Delta(Q) + 1} + \beta$, and the definition of $S_1$, we get
\[kM = \beta M + \epsilon S_1 + \epsilon\sum_{v \in N(x)} d_G(x) + d_G(v) - \mu(xv).\]
Since $\sum_{v \in N(x)} \mu(xv) = d_G(x)$, we have
\begin{equation}\label{eq4}
kM = \beta M + \epsilon S_1 + \epsilon d_G(x)(M - 1) + \epsilon P.
\end{equation}
Substituting \eqref{eq3} into \eqref{eq4} and solving for $M$ gives
\[M= \frac{\epsilon\parens{\card{G} - \Delta(G) - d_G(x) - 1 + S_1 + S_2 + S_3\parens{\card{G} - 1}}}{(1-\epsilon)\Delta(G) - \epsilon d_G(x) + 1 - \beta + S_3},\]
as desired.
\end{proof}

Using $\epsilon = \frac56$ in Lemma~\ref{Slacked}, we get the following.

\begin{lem}
\label{Slacked56}
Let $G$ be a critical, elementary graph with $\chi'(G) = k + 1$ where $k \ge \Delta(G) + 1$.  Put $Q \DefinedAs L(G)$. 
If $k = \frac56\parens{\Delta(Q) + 1} + \beta$, then for all $x \in V(G)$,
\[\card{N(x)} = \frac{5\parens{\card{G} - \Delta(G) - d_G(x) - 1 + S_1 + S_2 + S_3\parens{\card{G} - 1}}}{\Delta(G) - 5 d_G(x) + 6(1 - \beta + S_3)},\]
where 
\[S_1 \DefinedAs \sum_{v \in N(x)} \Delta(Q) - d_Q(xv),\]
\[S_2 \DefinedAs 2 + \sum_{v \in V(G) \setminus N(x)} \Delta(G) - d_G(v),\]
\[S_3 \DefinedAs k - (\Delta(G) + 1).\]
\end{lem}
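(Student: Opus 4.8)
The plan is to apply Lemma~\ref{Slacked} directly, with $\epsilon = \frac56$. The quantities $S_1$, $S_2$, $S_3$ are defined identically in the two statements, and the hypothesis $k = \frac56\parens{\Delta(Q)+1}+\beta$ is exactly the instance $\epsilon = \frac56$ of the hypothesis $k = \epsilon\parens{\Delta(Q)+1}+\beta$ of Lemma~\ref{Slacked}. So the only work is to substitute $\epsilon = \frac56$ into the displayed formula for $\card{N(x)}$ and simplify.

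First I would record that, with $\epsilon = \frac56$, the numerator $\epsilon\parens{\card{G} - \Delta(G) - d_G(x) - 1 + S_1 + S_2 + S_3\parens{\card{G} - 1}}$ carries an overall factor of $\frac56$, and that in the denominator $(1-\epsilon)\Delta(G) - \epsilon d_G(x) + 1 - \beta + S_3$ we have $1-\epsilon = \frac16$ and $\epsilon = \frac56$, so the denominator becomes $\frac16\Delta(G) - \frac56 d_G(x) + 1 - \beta + S_3$. Then I would multiply both numerator and denominator by $6$: the numerator becomes $5\parens{\card{G} - \Delta(G) - d_G(x) - 1 + S_1 + S_2 + S_3\parens{\card{G} - 1}}$ and the denominator becomes $\Delta(G) - 5d_G(x) + 6(1 - \beta + S_3)$, which is exactly the claimed expression.

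There is no genuine obstacle here; all of the content lies in Lemma~\ref{Slacked}, and this statement is just the convenient specialization that will be used in the remainder of Section~\ref{sec:final}. The only point requiring care is the bookkeeping when clearing denominators, namely making sure the factor of $6$ is applied to every term of the denominator — in particular to the $+1$, the $-\beta$, and the $+S_3$ — so that the $(1-\beta+S_3)$ appears multiplied by $6$ in the final form.
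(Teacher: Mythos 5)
Your proposal is correct and matches the paper exactly: the paper gives no separate proof, simply stating that the lemma follows by "using $\epsilon = \frac56$ in Lemma~\ref{Slacked}," which is precisely your substitution and clearing of denominators. Nothing further is needed.
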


\begin{lem}
\label{DegreeBoundedForMiddling}
Let $G$ be a critical, elementary graph with $\chi'(G) = k + 1$, where $k \ge
\Delta(G) + 1$.  Put $Q \DefinedAs L(G)$.  If $k = \frac56\parens{\Delta(Q) +
1} + \beta$, where $\beta \ge -\frac13$, then for all $x \in V(G)$ with
$\card{N(x)} \ge 3$, \[d_G(x) \le \frac{3}{5}\Delta(G) - \frac{1}{\card{N(x)} -
2}\sum_{v \in V(G)\setminus N[x]} \Delta(G) - d_G(v).\]
If additionally, $\card{N(x)} \le \frac58\card{G}$, then
\[d_G(x) \le \frac{\card{N(x)}}{5\parens{\card{N(x)} - 2}}\Delta(G) - \frac{1}{\card{N(x)} - 2}\sum_{v \in V(G)\setminus N[x]} \Delta(G) - d_G(v).\]
\end{lem}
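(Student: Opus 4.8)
The key tool is Lemma~\ref{Slacked56}, which gives an exact formula for $\card{N(x)}$ in terms of $S_1$, $S_2$, $S_3$, $\card{G}$, $\Delta(G)$, and $d_G(x)$. The strategy is to treat that identity as an equation relating $\card{N(x)}$ and $d_G(x)$, plug in crude but valid bounds on the slack terms $S_1$, $S_2$, $S_3$, and then solve the resulting inequality for $d_G(x)$. Write $M = \card{N(x)}$. First I would record the trivial bounds: $S_1 \ge 0$ (since $d_Q(xv) \le \Delta(Q)$ for every neighbor $v$), $S_3 \ge 0$ (since $k \ge \Delta(G)+1$), and $S_2 = 2 + \sum_{v \in V(G)\setminus N(x)} (\Delta(G) - d_G(v))$, which I would split off the term for $x$ itself: $S_2 = 2 + (\Delta(G) - d_G(x)) + \sum_{v \in V(G)\setminus N[x]} (\Delta(G) - d_G(v))$. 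Denote $R \DefinedAs \sum_{v \in V(G)\setminus N[x]} (\Delta(G) - d_G(v)) \ge 0$, so $S_2 \ge 2 + \Delta(G) - d_G(x) + R$.

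**Carrying it out.** Since $\beta \ge -\tfrac13$, the denominator in Lemma~\ref{Slacked56} satisfies $\Delta(G) - 5d_G(x) + 6(1-\beta+S_3) \le \Delta(G) - 5d_G(x) + 6(1+\tfrac13+S_3) = \Delta(G) - 5d_G(x) + 8 + 6S_3$ — wait, that's the wrong direction; I need to be careful about the sign of the denominator. Since $M \ge 3 > 0$ and the numerator is a sum of nonnegative-ish terms, the denominator is positive, and I can cross-multiply: $M\big(\Delta(G) - 5d_G(x) + 6(1-\beta+S_3)\big) = 5\big(\card{G} - \Delta(G) - d_G(x) - 1 + S_1 + S_2 + S_3(\card{G}-1)\big)$. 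Now substitute the lower bound for $S_2$ into the right side and $S_1 \ge 0$, $S_3 \ge 0$ on the right, and on the left use $-\beta \le \tfrac13$ and drop the $6S_3$ term (it appears on both sides with coefficients $M$ and $5$, and since $M \ge 3 \ge 5 \cdot \tfrac35$... actually I should check that $6MS_3 \ge 30 S_3$, i.e. $M \ge 5$; that may fail, so I should instead keep $S_3$ terms and observe $5S_3(\card{G}-1) - 6MS_3 \ge S_3(5\card{G} - 5 - 6M) \ge 0$ precisely when $M \le \tfrac{5\card{G}-5}{6}$, which is implied by the hypothesis $M \le \tfrac58\card{G}$ in the second part but not the first). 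To handle the first part cleanly, I would instead move all $S_3$ and $S_1$ terms appropriately: on the right we gain $5S_1 + 5S_3(\card{G}-1) \ge 0$ and on the left we have $6MS_3$, so the inequality I want to prove, $M(\Delta(G) - 5d_G(x) + 8) \le 5(\card{G} - \Delta(G) - d_G(x) - 1 + S_2 + \text{stuff})$, should follow because the $S_3$ contributions only help. I'd then cancel the common $\card{G}$ terms: after substituting $S_2 \ge 2 + \Delta(G) - d_G(x) + R$, the bare $\card{G}$ on the right cancels against $5\card{G}$ appearing in $5S_3(\card{G}-1)$ only partially — so I'll keep $\card{G}$ on both sides and cancel $5\card{G}$ directly, noting the $-\Delta(G)$ and $+\Delta(G)$ (from $S_2$) and $-d_G(x)$ and $-d_G(x)$ combine to leave: right side $\ge 5(\card{G} + 1 - 2d_G(x) + R + S_3(\card{G}-1))$, roughly. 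The upshot after collecting: $5(M-2)d_G(x) \le (something)\Delta(G) - 5R + \ldots$, from which dividing by $5(M-2) > 0$ (using $M \ge 3$) gives $d_G(x) \le \tfrac35\Delta(G) - \tfrac{1}{M-2}R$, the first claimed bound.

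**The second bound.** For the second part, with the extra hypothesis $M \le \tfrac58\card{G}$, the term $5S_3(\card{G}-1) - 6MS_3 = S_3(5\card{G} - 5 - 6M) \ge S_3(5\card{G} - 5 - \tfrac{15}{4}\card{G}) = S_3(\tfrac{5}{4}\card{G} - 5) \ge 0$ (for $\card{G} \ge 4$, which holds), so the $S_3$ terms are again harmless and in fact this is exactly what lets the $\card{G}$-terms work out to give the sharper coefficient $\tfrac{M}{5(M-2)}$ instead of $\tfrac35$: here I would \emph{not} throw away the $+5(\card{G}-\Delta(G)-1)$ on the right against a crude bound, but rather keep track of how $\card{G}$ cancels, and the constraint $M \le \tfrac58\card{G}$ is precisely what guarantees a certain coefficient stays nonnegative so I can discard it favorably.

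**Main obstacle.** The delicate point is the bookkeeping with $S_3$ and $\card{G}$: the term $S_3(\card{G}-1)$ on the numerator side and $6MS_3$ on the denominator side have opposite effects, and whether their difference has the helpful sign depends on how $M$ compares to $\tfrac{5\card{G}-5}{6}$. For the first (weaker) bound I expect one can avoid this by a slightly different grouping — bounding $6(1-\beta+S_3) \le 8 + 6S_3$ and then showing the net $S_3$-coefficient $5(\card{G}-1) - 6M$ is nonnegative is \emph{not} needed because even if it's negative it's dominated, or alternatively by observing $S_3 = k - \Delta(G) - 1$ is itself controlled. For the second bound the hypothesis $M \le \tfrac58\card{G}$ is exactly the hypothesis that makes the sign work, so the real work is verifying $5\card{G} - 5 - 6M \ge 0$ follows from $M \le \tfrac58\card{G}$ together with $\card{G}$ not too small (which holds since $\card{G}$ is odd and at least, say, $3$, with small cases checked directly), and then chasing the algebra to extract the coefficient $\tfrac{M}{5(M-2)}$. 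None of this is deep, but it is the kind of calculation where a sign error would be easy, so I would carry out the cross-multiplication symbolically and substitute the slack bounds only at the very end.
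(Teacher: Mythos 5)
Your setup coincides with the paper's: apply Lemma~\ref{Slacked56}, clear the denominator (legitimate simply because the formula is an identity obtained by solving a linear equation for $\card{N(x)}$, so no positivity check is needed), split the $v=x$ term off $S_2$, and use $S_1\ge 0$, $S_3\ge 0$, $\beta\ge-\frac13$. Carried out, with $M=\card{N(x)}$ and $t=\sum_{v\notin N[x]}(\Delta(G)-d_G(v))$, this yields
\[
5(M-2)\,d_G(x)\;\le\; M\Delta(G)-5t+\bigl(8M-5\card{G}-5\bigr)+S_3\bigl(6M-5\card{G}+5\bigr),
\]
and your treatment of the second bound is correct and is exactly the paper's: when $M\le\frac58\card{G}$ both bracketed quantities are nonpositive (the second using $\card{G}\ge4$ and $S_3\ge0$), so dividing by $5(M-2)$ gives $d_G(x)\le\frac{M}{5(M-2)}\Delta(G)-\frac{t}{M-2}$.

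The gap is the first bound in the complementary regime $M>\frac58\card{G}$, where you must show
\[
\bigl(8M-5\card{G}-5\bigr)+S_3\bigl(6M-5\card{G}+5\bigr)\;\le\;(2M-6)\Delta(G).
\]
Your proposal offers only that the $S_3$ contributions ``only help,'' or that ``even if [the net coefficient] is negative it is dominated,'' or that ``$S_3$ is itself controlled.'' None of this is established, and the first assertion is false in this regime: when $M$ is near $\card{G}-1$ the coefficient $6M-5\card{G}+5$ is positive and as large as $\card{G}-1$, so the left side contains a term of order $S_3\cdot\card{G}$, and nothing in your listed slack bounds controls $S_3=k-\Delta(G)-1$ from above. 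This case is the bulk of the paper's proof: it writes $S_4=M-2=\card{G}-3+S_5$ with $S_5\le0$, invokes Shannon's theorem to get $S_3\le\frac12\Delta(G)-2$, and runs a two-stage contradiction argument (first forcing $S_3<\frac{3}{10}\Delta(G)-\frac65$, then forcing $\card{G}-3\le S_4<2$, contradicting $\card{G}\ge5$). Some external input such as Shannon's bound, together with the constraint $M\le\card{G}-1$ (and/or $\Delta(G)\ge d_G(x)\ge M$), is genuinely needed; the inequality does not follow from $S_1,S_3\ge0$ and $\beta\ge-\frac13$ alone. So your proposal disposes of the easy half but leaves unproved precisely the half you yourself flag as the main obstacle, and that is where the content of the lemma lies. (You do at least note that the vacuous case $\card{G}=3$ must be checked separately, which the paper also does first.)
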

\begin{proof}
We may assume that $\card{G}\ge 5$, since $\card{G}$ is odd by
Proposition~\ref{easy-prop}, and if $\card{G}=3$, then $\card{N(x)}\le 2$ for
all $x\in V(G)$, so there is nothing to prove.

Choose $x\in V(G)$ with $\card{N(x)}\ge 3$.
Let $S_4=\card{N(x)}-2$, and note that $S_4\ge 1$.
Applying Lemma \ref{Slacked56} and simplifying using $S_1 \ge 0$ and $\beta \ge -\frac13$ gives
\begin{equation}\label{longeq}
(5+5S_4)d_G(x) \le (7 + S_4)\Delta(G) - 5\card{G} + 21 + S_3(-5\card{G} + 17 + 6S_4) + 8S_4 - 5S_2.
\end{equation}
Put 
\[t \DefinedAs \sum_{v \in V(G) \setminus N[x]} \Delta(G) - d_G(v).\]
Now $S_2 = t + 2 + \Delta(G) - d_G(x)$.  Using this in \eqref{longeq}, we get
\begin{equation}\label{longeq2}
5S_4d_G(x) \le (2 + S_4)\Delta(G) - 5\card{G} + 11 + S_3(-5\card{G} + 17 + 6S_4) + 8S_4 - 5t.
\end{equation}
When $\card{N(x)}\le\frac58\card{G}$, i.e.,
$S_4 \le \frac58\card{G} - 2$, we have
\[- 5\card{G} + 11 + S_3(-5\card{G} + 17 + 6S_4) + 8S_4 \le 0,\]
so dividing \eqref{longeq2} through by $5S_4$ gives the desired bound.

So instead assume $S_4 > \frac58\card{G} - 2$. Rearranging \eqref{longeq2} gives 
\begin{equation}\label{longeq3}
5S_4d_G(x) \le 3S_4\Delta(G) - (2S_4-2)\Delta(G) - 5\card{G} + 11 +
S_3(-5\card{G} + 17 + 6S_4) + 8S_4 - 5t
\end{equation}
Now $S_4 = \card{N(x)} - 2 = \card{G} - 3 + S_5$, where
\[S_5 \DefinedAs S_4 + 3 - \card{G} \le 0.\]
Thus
\[-5\card{G} + 15 + 5S_4 +S_3(-5\card{G} + 15 + 5S_4) - 5S_5(1 + S_3) = 0.\]
Subtracting this equality from \eqref{longeq3} gives
\begin{equation}\label{longeq4}
5S_4d_G(x) \le 3S_4\Delta(G) - (2S_4-2)\Delta(G) - 4 + 2S_3 + (S_3 + 3)S_4 + 5S_5(1 + S_3) - 5t 
\end{equation}
If
\[- (2S_4-2)\Delta(G) -4 + 2S_3 + (S_3 + 3)S_4 + 5S_5(1 + S_3) \le 0,\]
then we have the desired bound
\[d_G(x) \le \frac{3}{5}\Delta(G) - \frac{1}{\card{N(x)} - 2}\sum_{v \in V(G)\setminus N[x]} \Delta(G) - d_G(v).\]
So assume instead that
\[-4 + 2S_3 + (S_3 + 3)S_4 - (2S_4-2)\Delta(G) + 5S_5(1 + S_3) > 0,\]
which we rewrite as
\begin{equation}\label{MM3}
(2 + S_4)S_3 + 3S_4 > (2S_4-2)\Delta(G) + 4 - 5S_5(S_3 + 1).
\end{equation}
By Shannon's theorem $k + 1 \le \frac32\Delta(G)$, so $S_3 \le \frac{\Delta(G)}{2} - 2$. After plugging in for $S_3$ on the left side and solving for $S_4$, we get
\[S_5 + \card{G} - 3 = S_4 < \frac{6\Delta(G) - 16 + 10S_5(S_3+1)}{3\Delta(G) -
2} = 2 + \frac{10S_5(S_3 + 1) - 12}{3\Delta(G) - 2},\]
so
\[\card{G} < 5 + \frac{(10S_3 - 3\Delta(G) + 12)S_5 - 12}{3\Delta(G) - 2}.\]
Since $S_5 \le 0$, this implies $\card{G} \le 3$, unless $10S_3 - 3\Delta(G) + 12 < 0$.  So $S_3 < \frac{3}{10}\Delta(G) - \frac65$. 
Since $S_5 \le 0$, \eqref{MM3} implies
\[(2 + S_4)S_3 + 3S_4 > (2S_4-2)\Delta(G) + 4\]
Substituting $S_3 < \frac{3}{10}\Delta(G)-\frac65$ gives
\[\card{G} - 3 \le S_4 < \frac{26\Delta(G) - 64}{17\Delta(G) - 18} < 2,\]
which contradicts that $\card{G}\ge 5$.
\end{proof}

\begin{cor}
\label{CriticalElementary56}
Let $G$ be a critical, elementary graph with $\chi'(G) = k + 1$, where $k \ge
\Delta(G) + 1$.  Put $Q \DefinedAs L(G)$.  If $k = \frac56\parens{\Delta(Q) +
1} + \beta$, where $\beta \ge -\frac13$, then there is at most one $x \in V(G)$
with $\card{N(x)} \ge 3$.
\end{cor}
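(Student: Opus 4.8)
The plan is to argue by contradiction: suppose $x,y$ are distinct vertices of $G$ with $\card{N(x)}\ge 3$ and $\card{N(y)}\ge 3$, and derive a contradiction by counting, across $V(G)$, the colors missing in a $k$-edge-coloring. First I would record the arithmetic. Since $G$ is critical and elementary, Proposition~\ref{easy-prop} gives that $\card{G}$ is odd and $k=\tfrac{2(\size{G}-1)}{\card{G}-1}$; combining this with $\sum_{v\in V(G)}d_G(v)=2\size{G}$ yields the identity $\sum_{v\in V(G)}\bigl(k-d_G(v)\bigr)=k-2$, equivalently $\sum_{v\in V(G)}\bigl(\Delta(G)-d_G(v)\bigr)=(k-2)-\card{G}\bigl(k-\Delta(G)\bigr)$. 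Since $\card{N(x)}\ge 3$ forces $\card{G}\ge 4$, oddness gives $\card{G}\ge 5$. I would also use the standard criticality fact that for every edge $uv$ a $k$-edge-coloring of $G-uv$ has $\vphn(u)\cap\vphn(v)=\emptyset$, hence $d_G(u)+d_G(v)\ge k+2$. Finally, Lemma~\ref{DegreeBoundedForMiddling} applied to $x$ and to $y$ gives $d_G(x),d_G(y)\le\tfrac35\Delta(G)$, together with the sharper form subtracting $\tfrac{1}{\card{N(z)}-2}\sum_{v\notin N[z]}\bigl(\Delta(G)-d_G(v)\bigr)$ for $z\in\{x,y\}$.

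The case $xy\notin E(G)$ is short: then $y\in V(G)\setminus N[x]$, so the sum subtracted in $x$'s bound is at least $\Delta(G)-d_G(y)$, and since $\card{N(x)}-2\ge 1$ this gives $d_G(x)\le\tfrac35\Delta(G)-\bigl(\Delta(G)-d_G(y)\bigr)=d_G(y)-\tfrac25\Delta(G)$; by symmetry $d_G(y)\le d_G(x)-\tfrac25\Delta(G)$, and adding the two yields $0\le-\tfrac45\Delta(G)$, a contradiction. When $\card{N(x)}\ge 4$ one uses instead the second inequality of Lemma~\ref{DegreeBoundedForMiddling}: where it applies it forces $d_G(x)\le\tfrac25\Delta(G)$, which makes the count below collapse at once, and the leftover possibility $\card{N(x)}\ge 4>\tfrac58\card{G}$ forces $\card{G}<\tfrac85\Delta(G)$ and is handled on its own.

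The main obstacle is the case $xy\in E(G)$, where $y\in N[x]$ so the subtracted-sum improvement need not help; here the plan is to combine everything. Feeding $d_G(x),d_G(y)\le\tfrac35\Delta(G)$ and $k-d_G(v)\ge k-\Delta(G)\ge 1$ (the latter for the $\card{G}-2\ge 3$ remaining vertices) into the identity gives $\tfrac65\Delta(G)\ge k+5$; with $\Delta(G)\le k-1$ this pins $\Delta(G)$, $k$, $\card{G}$, and the degree sequence into a narrow band in which $k$ is close to $\Delta(G)+1$, almost every vertex has degree close to $\Delta(G)$, and (using the criticality inequality on the edges at $x$, together with the fact that $x$'s non-$y$ neighbors absorb almost none of the total deficiency) the neighbors of $x$ other than $y$ have degree close to $\Delta(G)$. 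Since the at least two multi-edges from $x$ to those neighbors have multiplicities summing to at most $d_G(x)\le\tfrac35\Delta(G)$, one of them, say $xa$, has $\mu(xa)\le\tfrac12 d_G(x)$, so $\Delta(Q)\ge d_G(x)+d_G(a)-\mu(xa)$ is essentially $\Delta(G)+\tfrac12 d_G(x)$, which (using $d_G(x)\ge k+2-d_G(y)\ge\tfrac25\Delta(G)+3$ and that $k$ itself lies in the narrow band) exceeds $\tfrac65 k-\tfrac35$. But $k=\tfrac56(\Delta(Q)+1)+\beta$ with $\beta\ge-\tfrac13$ rearranges to $\Delta(Q)\le\tfrac65 k-\tfrac35$, a contradiction.

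I expect essentially all of the work to sit in this last case — tightening the band to get that $x$'s non-$y$ neighbors have degree near $\Delta(G)$, and then verifying the $\Delta(Q)$ lower bound carefully against the ceiling $\tfrac65 k-\tfrac35$ — whereas the non-adjacent case and the $\card{N(\cdot)}\ge 4$ cases are quick.
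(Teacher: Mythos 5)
Your opening moves (the deficiency identity $\sum_{v}(k-d_G(v))=k-2$, the adjacency inequality $d_G(u)+d_G(v)\ge k+2$, and the two bounds of Lemma~\ref{DegreeBoundedForMiddling}) coincide with the paper's starting point, and your non-adjacent case with $\card{N(x)}=\card{N(y)}=3$ is correct. However, there are two genuine gaps, both traceable to the same missing ingredient: you never use Theorem~\ref{CriticalMuBound}, whereas the paper's proof hinges on it. Concretely, a vertex $z$ with $d_G(z)=\Delta(G)$ has $\card{N(z)}=2$ by Lemma~\ref{DegreeBoundedForMiddling}, so $\mu(G)\ge\frac12\Delta(G)$, and Theorem~\ref{CriticalMuBound} then yields $\Delta(G)<\frac23(\Delta(Q)+1)$. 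Without some such upper bound on $\Delta(G)$ in terms of $\Delta(Q)$, the remaining cases do not close.

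In the adjacent case, your ``narrow band'' is not established and the terminal inequality fails by a constant. The counting you describe gives $k\le\frac65\Delta(G)-5$ and $(\card{G}-2)(k-\Delta(G))\le\frac65\Delta(G)-2-k$, which together with $\Delta(G)\le k-1$ only force $k\ge 31$ and $k-\Delta(G)\le\frac{k-16}{5(\card{G}-2)}$; for large $k$ this allows $k-\Delta(G)$ of order $k/15$, so $k$ need not be close to $\Delta(G)+1$ and the degrees are not pinned. Moreover, even granting the most favorable scenario in your final step ($d_G(a)=\Delta(G)$ exactly and $\mu(xa)\le\frac12 d_G(x)$), one gets only $\Delta(Q)\ge \frac12 d_G(x)+\Delta(G)-1\ge\frac12(k+2-d_G(y))+\Delta(G)-1\ge\frac12 k+\frac{7}{10}\Delta(G)$, and comparing with the target $\Delta(Q)>\frac65k-\frac35$ requires $\Delta(G)>k-\frac67$, i.e.\ $\Delta(G)\ge k$, which contradicts $k\ge\Delta(G)+1$. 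So the contradiction is missed no matter how the band is tightened. The paper instead concludes from $d_G(x)+d_G(y)\ge k+2$ and $d_G(x),d_G(y)\le\frac35\Delta(G)$ that $\Delta(G)>\frac56k$, and plays this off against $\Delta(G)<\frac23(\Delta(Q)+1)$ to get $k<\frac45(\Delta(Q)+1)$, contradicting $\beta\ge-\frac13$. The second gap is the clause ``the leftover possibility $\card{N(x)}\ge 4>\frac58\card{G}$ \dots is handled on its own,'' which is not an argument; this is precisely where the paper does substantial work. When both exceptional vertices have more than $\frac58\card{G}$ neighbours they share a common neighbour $y$ with $\card{N(y)}=2$, so $\mu(x_iy)\ge\frac12 d_G(y)$ for some $i$, whence $k\le d_Q(x_iy)<\frac35\Delta(G)+\frac12\Delta(G)=\frac{11}{10}\Delta(G)<\frac{11}{15}(\Delta(Q)+1)$ --- again relying on the bound from Theorem~\ref{CriticalMuBound}. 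Your case split also leaves mixed sub-cases (one vertex with exactly three neighbours, the other with more than $\frac58\card{G}$) unaddressed.
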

\begin{proof}
Since $G$ is critical and elementary, Proposition~\ref{easy-prop} implies that
$\card{G}$ is odd and
\[\frac{2(\size{G} - 1)}{\card{G} - 1} = k \ge \Delta(G) + 1,\]
so
\[2\size{G} \ge \Delta(G)\card{G} + \card{G} - \Delta(G) + 1.\]
In particular,
\begin{align}
\sum_{v \in V(G)} \Delta(G) - d_G(v) &\le \Delta(G) - 1 - \card{G}.
\label{eqB1}
\end{align}
By Lemma \ref{DegreeBoundedForMiddling}, every $x \in V(G)$ with $\card{N(x)} \ge 3$ has $d_G(x) \le \frac35\Delta(G)$, so there are at most two such $x$ since
$\frac25 + \frac25 + \frac25 > 1$.  Suppose there are $x_1, x_2$ with $\card{N(x_1)} \ge \card{N(x_2)} \ge 3$.

Choose $z \in V(G)$ with $d_G(z) = \Delta(G)$.  By Lemma
\ref{DegreeBoundedForMiddling}, $\card{N(z)} = 2$, so $\mu(G) \ge \frac12\Delta(G)$.
By Theorem \ref{CriticalMuBound}, $\mu(G) < \frac13(\Delta(Q) + 1)$.  We conclude
\begin{equation}
\Delta(G) < \frac23(\Delta(Q) + 1).\label{DeltaGQUpperBound}
\end{equation}

First, suppose $x_1$ and $x_2$ are adjacent.  Since $Q$ is vertex-critical, 
\begin{align*}
k &\le d_Q(x_1x_2)\\
&=d_G(x_1) + d_G(x_2) - \mu(x_1x_2) - 1\\
&\le \frac65\Delta(G) - \mu(x_1x_2) - 1,
\end{align*}
So, $\Delta(G) >\frac56k$.  By \eqref{DeltaGQUpperBound},
\[\frac56k < \Delta(G) < \frac23(\Delta(Q) + 1),\]
and hence $k < \frac45(\Delta(Q) + 1)$, a contradiction.  

So instead assume $x_1$ and $x_2$ are non-adjacent. Suppose also that
$\card{N(x_2)} \le \frac58\card{G}$.  If $\card{N(x_2)}\ge 4$, then 
$d_G(x_2)\le \frac25\Delta(G)$.
Now~\eqref{eqB1} and Lemma~\ref{DegreeBoundedForMiddling} give
\begin{align*}
\Delta(G)-1-\card{G}&\ge \sum_{v\in V(G)}\Delta(G)-d_G(v)\\
&\ge (\Delta(G)-d_G(x_1))+(\Delta(G)-d_G(x_2))\\
&\ge \left(\frac25+\frac35\right)\Delta(G)=\Delta(G),
\end{align*}
a contradiction.

So assume that $\card{N(x_2)}=3$.
Since $x_1$ and $x_2$ are non-adjacent, 
Lemma~\ref{DegreeBoundedForMiddling} gives \[d_G(x_2) \le \frac35\Delta(G) -
(\Delta(G) - d_G(x_1)) \le \frac15\Delta(G).\] 
Similar to above, we get a contradiction since $\Delta(G)>\left(\frac45 +
\frac25\right)\Delta(G) > \Delta(G)$. 

So assume that $\card{N(x_i)} > \frac58\card{G}$ for each $i \in \irange{2}$. 
In particular, there is $y \in N(x_1) \cap N(x_2)$.  Since $\card{N(y)} = 2$,
by symmetry we may assume $\mu(x_1y) \ge \frac12 d_G(y)$.  Hence, using
\begin{align*}
k &\le d_Q(x_1y) \\
&= d_G(x_1) + d_G(y) - \mu(x_1y) < d_G(x_1) + \frac12d_G(y) \\
&\le \frac35\Delta(G)+\frac12\Delta(G)=\frac{11}{10}\Delta(G) \\
&< \frac{11}{15}(\Delta(Q) + 1),
\end{align*}
where the final inequality holds by \eqref{DeltaGQUpperBound}.
This contradiction completes the proof.
\end{proof}

Now we prove the Main Theorem.

\begin{thm}[$\frac56$-Theorem]
If $Q$ is a line graph, then
\[\chi(Q)\le \max\set{\omega(Q),\frac{5\Delta(Q)+8}{6}}.\]
\label{thm:main}
\end{thm}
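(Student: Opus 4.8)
The plan is to take $Q$ to be a minimal counterexample; then $Q$ is vertex critical, and since deleting a vertex of $L(G)$ corresponds to deleting an edge of $G$, it follows that $G$ is critical. If $\chi(Q) \le \Delta(G) + 1$, then Corollary~\ref{mainCorHelper} already gives the conclusion, so we may assume $\chi(Q) > \Delta(G) + 1$. Corollary~\ref{mainCor} then yields $\chi(Q) \le \max\{\Delta(G), \W(G), \frac{5\Delta(Q)+8}{6}\}$; since $\chi(Q) > \Delta(G)$ and, because $Q$ is a counterexample, $\chi(Q) > \frac{5\Delta(Q)+8}{6}$, we must have $\chi(Q) \le \W(G)$, and as $\chi(Q) = \chi'(G) \ge \W(G)$ this forces $\chi'(G) = \W(G)$, i.e., $G$ is elementary, with $\chi'(G) = k+1 \ge \Delta(G)+2$. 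Writing $\beta = k - \frac56(\Delta(Q)+1)$, the inequality $k+1 > \frac{5\Delta(Q)+8}{6}$ rearranges to $6\beta > -3$, and since $6\beta \in \mathbb{Z}$ we get $\beta \ge -\frac13$; hence Corollary~\ref{CriticalElementary56} applies and at most one vertex of $G$ has three or more neighbours. Finally, Proposition~\ref{easy-prop} gives that $\size{G}$ is odd and $\W(G) = \ceil{2\size{G}/(\card{G}-1)}$.

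Next I would pin down the structure of the simple graph $G^-$ underlying $G$. Since $G$ is critical it has minimum degree at least $2$, so combining this with the previous paragraph, $G^-$ is either a single cycle or a ``bouquet'': a single vertex $x$ together with $m \ge 2$ cycles through $x$ that are pairwise disjoint apart from $x$. The bouquet case is impossible: writing the petals as fat cycles $P_1, \dots, P_m$ with $\size{P_j} = s_j$ and $\card{P_j} = \ell_j$, the mediant inequality gives $\frac{2\size{G}}{\card{G}-1} = \frac{2\sum_j s_j}{\sum_j(\ell_j-1)} \le \max_j \frac{2 s_j}{\ell_j-1}$, so the best petal $P_{j^*}$ satisfies $\W(G) \ge \W(P_{j^*}) \ge \ceil{2 s_{j^*}/(\ell_{j^*}-1)} \ge \ceil{2\size{G}/(\card{G}-1)} = \W(G)$, whence $\W(P_{j^*}) = \W(G)$; but $P_{j^*}$ is a proper subgraph (as $m \ge 2$), so choosing an edge $e$ of $G$ outside $P_{j^*}$ we get $\chi'(G-e) \ge \W(G-e) \ge \W(P_{j^*}) = \W(G) = \chi'(G)$, contradicting that $G$ is critical. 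Therefore $G^- = C_n$; let $\mu_1, \dots, \mu_n$ be the edge multiplicities and put $s = \size{G} = \sum_i \mu_i$ and $n = \card{G}$.

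It remains to rule out $G^- = C_n$ for each value of $n$. If $n$ is even, then $G$ is a bipartite multigraph, so $\chi'(G) = \Delta(G)$ by K\"onig's theorem, contradicting $\chi'(G) \ge \Delta(G)+2$. If $n = 3$, then every two edges of $G$ share an endpoint, so $Q = L(G)$ is a complete graph and $\chi(Q) = \omega(Q)$, contradicting that $Q$ is a counterexample. So $n \ge 5$ is odd, say $n = 2m+1$ with $m \ge 2$. A vertex of $Q$ corresponding to an edge in multiplicity class $i$ has degree $\mu_{i-1} + \mu_i + \mu_{i+1} - 1$ (indices mod $n$), so averaging over $i$ gives $\Delta(Q) \ge \frac{3s}{n} - 1$. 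On the other hand $\W(G) = \ceil{2s/(n-1)} = k+1$ forces $2s > k(n-1) = 2mk$, hence $s \ge mk+1$; substituting, the inequality $\frac{3(mk+1)}{2m+1} - 1 \ge \frac{6k-2}{5}$ reduces to $3(m-2)(k-2) \ge 0$, which holds. Thus $\Delta(Q) \ge \frac{6k-2}{5}$, i.e., $\chi(Q) = k+1 \le \frac{5\Delta(Q)+8}{6}$, again contradicting that $Q$ is a counterexample. This exhausts all cases.

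Essentially all of the difficulty is already absorbed into Corollaries~\ref{mainCorHelper},~\ref{mainCor}, and~\ref{CriticalElementary56}; the remaining work here is bookkeeping. The step I expect to require the most care is the structural analysis: extracting the dichotomy ``cycle or bouquet'' from Corollary~\ref{CriticalElementary56} and disposing of the bouquet via the criticality/mediant argument, and then checking that the averaging bound on $\Delta(Q)$ genuinely beats $\frac{6k-2}{5}$ for every odd $n \ge 5$ --- the even cycles and the triangle, where this averaging argument would otherwise fail, being precisely the cases rescued by K\"onig's theorem and by $\omega(Q)$ respectively.
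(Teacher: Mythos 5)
Your overall architecture matches the paper's: minimal counterexample, reduce to the elementary case via Corollaries~\ref{mainCorHelper} and~\ref{mainCor}, extract $\beta\ge-\frac13$, invoke Corollary~\ref{CriticalElementary56} to force the underlying simple graph to be (essentially) a cycle, and finish by an averaging argument over the multiplicities of a thickened odd cycle. Your endgame, however, is genuinely cleaner than the paper's. The paper bounds $\frac{5\overline d+3}{6}$ against $\frac{X}{t}$ using only $X\ge 2t+1$, which fails for small $t$ and forces separate case analyses for $t\in\{2,3,4,5\}$ (plus a ceiling reformulation to rescue $t=2$). You instead feed in the full strength of elementariness, $s=mk+1$ (equivalently Proposition~\ref{easy-prop}), and the comparison collapses to the single identity
\[
\frac{3(mk+1)}{2m+1}-1-\frac{6k-2}{5}=\frac{3(m-2)(k-2)}{5(2m+1)}\ge 0,
\]
which handles every odd $n\ge 5$ uniformly, including $n=5$ where it is tight. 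I checked this computation and it is correct; it would shorten the published proof. Your dispatching of even $n$ by K\"onig and of $n=3$ by completeness of $Q$ is also fine (the paper gets odd order directly from Proposition~\ref{easy-prop} instead).

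The one place where your argument has a real hole is the bouquet elimination. The paper avoids this case entirely by the standard fact that a critical graph has no cut vertices (splitting at a cut vertex and permuting colors of one side merges two $k$-colorings), so $H$ is a cycle outright. Your mediant argument needs $\W(P_{j^*})\ge\ceil{2s_{j^*}/(\ell_{j^*}-1)}$, which requires $\floor{\ell_{j^*}/2}=(\ell_{j^*}-1)/2$, i.e., that the chosen petal has an odd number of vertices. Since only $\sum_j(\ell_j-1)=\card{G}-1$ is controlled, the best petal from the mediant inequality may well be even, and for an even petal $\W(P_{j^*})$ is bounded only by $\ceil{2s_{j^*}/\ell_{j^*}}$ (indeed an even petal is bipartite, so $\W(P_{j^*})\le\Delta(P_{j^*})\le k$), and the contradiction with criticality evaporates. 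The fix is simply to replace the whole bouquet analysis with the observation that the hub $x$ would be a cut vertex of the critical graph $G$; with that substitution your proof is correct.
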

\begin{proof}
It is convenient to note that, since $\chi(Q)$ is an integer,
$\chi(Q)\le \frac{5\Delta(Q)+8}{6}$ if and only if
$\chi(Q)\le \ceil{\frac{5\Delta(Q)+3}{6}}.$  
For the present proof, it is simpler to work with the latter formulation.

Suppose the theorem is false and choose a counterexample $Q$ minimizing $\card{Q}$.  
Now $Q = L(G)$ for a critical graph $G$.  Say $\chi(Q) = \chi'(G) = k + 1$.  So
$k = \max\set{\omega(Q),\ceil{\frac{5\Delta(Q)+3}{6}}}$ by the minimality of
$\card{Q}$.  By Corollary \ref{mainCor},
\[k+1 \le \max\set{\W(G), \ceil{\frac{5\Delta(Q)+3}{6}}},\]
so
\[\W(G) = \ceil{\frac{5\Delta(Q)+3}{6}} + 1 = \chi(Q).\]
Therefore $G$ is elementary and $k = \frac56\parens{\Delta(Q) + 1} + \beta$ for some $\beta \ge -\frac13$.  By Corollary~\ref{mainCorHelper}, $k \ge \Delta(G) + 1$.
Let $H$ be the underlying simple graph of $G$.
We may apply Corollary \ref{CriticalElementary56} to conclude that there is at most one $x \in V(G)$ with $d_H(x) \ge 3$.
Since $G$ is critical, $\delta(H) \ge 2$ and $H$ has no cut vertices. Hence $H$ is a cycle. 

Choose $t$ such that $\card{V(H)}=2t+1$.
Let $x_1,\ldots,x_{2t+1}$ denote the multiplicities of the edges in $G$, and let
$X=\sum_{i=1}^{2t+1}x_i$.  Since $G$ is elementary, we have
$\chi'(G)=\ceil{\frac{X}t}$.  
Let $Q=L(G)$ and let $v_i$ be a vertex of $Q$
corresponding to an edge of $G$ counted by $x_i$.  Now
$d_Q(v_i)=x_{i-1}+x_i+x_{i+1}-1$.  It suffices to show that there exists
$j\in[2t+1]$ such that $\frac{X}t\le \frac{5d_Q(v_j)+3}6$.  We will prove the
stronger statement that $\frac{X}t\le \frac{5\overline{d}+3}6$, where
$\overline{d}=\frac{1}{2t+1}\sum_{i=1}^{2t+1}d_Q(v_i)$.
Since $\frac{5\overline{d}+3}6 =\frac{5X}{2(2t+1)}-\frac13$, it suffices to have 
$\frac{5X}{2(2t+1)}-\frac13\ge \frac{X}t$.  Simplifying (for $t\ge 3$) gives $X\ge
\frac13(4t+10+\frac{20}{t-2})$.  Since $X\ge 2t+1$, this always holds when $t\ge
6$.  When $t=5$, it suffices to have $X\ge 13$.  When $t=4$, it suffices to have
$X\ge 12$, and when $t=3$, it suffices to have $X\ge 14$.  Suppose $t=5$ and
$X\le 12$.  
Now $\chi'(G)\le \ceil{\frac{12}5}=3\le\ceil{\frac{5(2)+3}6}$.
Suppose instead that $t=4$ and $X\le 11$.
Now $\chi'(G)\le \ceil{\frac{11}4}=3\le\ceil{\frac{5(2)+3}6}$.
Suppose instead that $t=3$ and $X\le 13$.
If $X\le 9$, then again 
$\chi'(G)\le \ceil{\frac{9}3}=3\le\ceil{\frac{5(2)+3}6}$.
So assume that $X\ge 10$, which implies that $\Delta(Q)\ge 4$.
First suppose that $X\le 12$.
Now $\chi'(G)\le \ceil{\frac{12}3}=4\le\ceil{\frac{5(4)+3}6}$.
So instead assume that $X=13$, which implies that $\Delta(Q)\ge 5$.
Now $\chi'(G)\le \ceil{\frac{13}3}=5\le\ceil{\frac{5(5)+3}6}$, as desired.
Finally, we consider $t=2$.  Now $\frac{5\overline{d}+3}6=\frac{X}2-\frac13$ and
$\frac{X}t=\frac{X}2$, so always $\frac{5\overline{d}+3}6<\frac{X}t$.  However,
$\ceil{\frac{5\overline{d}+3}6}=\ceil{\frac{X}2-\frac13}=\ceil{\frac{X}2}$, for
all integers $X$, which completes the proof.
\end{proof}

We suspect that our Main Theorem can be extended to the larger class of
quasi-line graphs (those for which the neighborhood of each vertex is covered by
two cliques).

\begin{conj}
If $Q$ is a quasi-line graph, then
\[\chi(Q)\le \max\set{\omega(Q),\frac{5\Delta(Q)+8}{6}}.\]
\end{conj}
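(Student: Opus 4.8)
The statement to be proved is a conjecture, so what follows is a plan of attack rather than a finished argument. The natural starting point is to reduce the quasi-line case to two more tractable subfamilies by invoking the structure theorem for quasi-line graphs of Chudnovsky and Seymour. Assume $Q$ is a vertex-critical counterexample minimizing $\card{Q}$; then $Q$ is connected, has no cut vertex, and $\delta(Q)\ge\chi(Q)-1$. By the Chudnovsky--Seymour structure theorem, either $Q$ is a fuzzy circular interval graph, or $Q$ is a composition of fuzzy linear interval strips glued along the edges of a multigraph $H$. The plan is to dispatch the first case by a direct analysis of cyclic clique orders, and to attack the second by recovering the edge-coloring machinery of the present paper. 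It is also worth recording the extremal example in this context: the $5$-cycle with $k$ parallel edges on each edge, whose line graph is both a circular interval graph and a quasi-line graph, so the bound remains tight and any proof must correctly locate the near-equality cases.

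\textbf{Step 1: the circular interval case.} A fuzzy circular interval graph carries a cyclic order of its vertices as points on a circle, together with a family of arcs each inducing a clique (plus controlled ``fuzzy'' freedom at arc endpoints). When no arc wraps all the way around the circle, $Q$ is an interval graph, hence perfect, and $\chi(Q)=\omega(Q)$. Otherwise the obstruction to $\chi(Q)=\omega(Q)$ is an odd hole-like cyclic structure, with the $C_5$-blowup as the tight example, and I would prove $\chi(Q)\le\max\set{\omega(Q),\frac{5\Delta(Q)+8}6}$ here by a weighting/averaging argument: choose a vertex of minimum degree, cover its closed neighborhood efficiently using the cyclic order, and bound $\chi(Q)$ by $\max\set{\omega(Q),\frac56\Delta(Q)+O(1)}$. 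I expect this step to be routine but fiddly, the fuzzy endpoints requiring care; it should closely parallel (and reuse ideas from) the closing computation in the proof of Theorem~\ref{thm:main}, where the skeleton $H$ was already a cycle.

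\textbf{Step 2: the strip-composition case.} If every strip is a genuine linear interval strip, then $Q$ is the line graph of a multigraph and we are done by Theorem~\ref{thm:main}. The new phenomenon is fuzzy linear strips, which yield quasi-line graphs that are not line graphs. The plan is to develop a ``quasi-Tashkinov'' theory: reinterpret a proper coloring of $Q$ as a generalized edge coloring of $H$ in which each edge-slot is forced to carry a short list of colors determined by its strip, and then port Theorem~\ref{elementary}, Tashkinov's Lemma, and the short/long dichotomy of Section~\ref{sec:short} to this setting. If this goes through, Claim~1 in the proof of Theorem~\ref{mainhelper} and the case analysis on $\nu(T)$ should survive essentially verbatim, the only change being the bookkeeping of the extra degree contributed by fuzzy endpoints. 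A perhaps more realistic variant is to bound the size of the fuzzy strips appearing in a minimal counterexample directly and then contract them, reducing to the honest line-graph case.

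\textbf{Main obstacle.} The crux is Step~2: Tashkinov trees, critical graphs, and the elementary-set theorem are intrinsically statements about edge colorings of multigraphs, and there is no off-the-shelf vertex-coloring analogue valid for all quasi-line graphs. The real difficulty is to formulate and prove a correct ``fuzzy Tashkinov tree'' lemma, with the right notion of a ``strongly closed'' set in the presence of fuzzy endpoints together with a matching elementariness statement; once that is in hand, the degree inequalities of Claims~1--5 and the extremal analysis should follow the template already laid out here. By analogy with the line-graph proof, which rests on the Weak $\frac56$-Theorem, I would expect that carrying this out forces one first to settle, or at least substantially advance, the quasi-line analogue of the bound $\chi(Q)\le\max\set{\W(G),\Delta(G)+1}$.
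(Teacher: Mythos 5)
The statement you were asked to prove is stated in the paper only as a conjecture: the authors prove the $\frac56$ bound for line graphs (Theorem~\ref{thm:main}) and explicitly leave the quasi-line extension open, so there is no proof in the paper to compare against, and your submission — as you yourself say — is a research program rather than a proof. The opening move, reducing via the Chudnovsky--Seymour structure theorem to (a) fuzzy circular interval graphs and (b) compositions of fuzzy linear interval strips, is the natural one and is surely how any eventual proof would begin. But both steps contain genuine gaps, not merely fiddly details.

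In Step 1 you assert that the non-wrapping case yields an interval graph, hence a perfect graph; a fuzzy linear interval graph is not literally an interval graph, and more importantly the wrapping case is not ``routine but fiddly.'' The paper's own treatment of the analogous situation — line graphs of thickened cycles, i.e., the case where the underlying simple graph is a cycle — occupies essentially all of Section~\ref{sec:final} and leans heavily on $G$ being critical and elementary, notions that have no meaning for a circular interval graph that is not a line graph; and since the $C_5$-blowup makes the bound tight, an averaging argument there has no slack to absorb losses from the fuzzy endpoints. In Step 2 the missing ingredient is exactly the one you name yourself: an analogue of Tashkinov's Lemma and of Theorem~\ref{elementary} for fuzzy strips. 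That is not bookkeeping — it is the entire content of the problem, and as you observe it would likely require first establishing a quasi-line analogue of the bound $\chi(Q)\le\max\set{\W(G),\Delta(G)+1}$, i.e., a Goldberg--Seymour-type statement that is itself open. So: correct framing and a sensible decomposition, but the proposal does not close the conjecture and cannot be counted as a proof.
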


\section{Strengthenings of Reed's Conjecture}
\label{sec:strong-reed}
In this section, we show how the results of Section~\ref{sec:short} 
imply Reed's Conjecture, as well as Local and Superlocal strengthenings of
Reed's Conjecture, for the class of line graphs.

Let $G$ be a graph.  The \emph{claw-degree}\aside{claw-degree} of $x \in V(G)$ is 
\[\dclaw{x} \DefinedAs \max_{\substack{S \subseteq N(x) \\ \card{S} = 3}}\frac14 \parens{d(x) + \sum_{v \in S} d(v)},\]
where $\dclaw{x} \DefinedAs 0$ when $\card{N(x)} \le 2$.
The \emph{claw-degree} of $G$ is 
\[\dclaw{G} \DefinedAs \max_{x \in V(G)} \dclaw{x}.\]
\begin{thm}
\label{EasyBound}
If $G$ is a graph, then
\[\chi'(G) \le \max\set{\W(G), \Delta(G) + 1, \ceil{\frac43\dclaw{G}}}.\]
\end{thm}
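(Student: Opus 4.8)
The plan is to argue by contradiction through a minimal counterexample, extracting the bound directly from the elementarity of a long fan. Suppose the theorem fails, and choose a graph $G$ with $\chi'(G)>\max\set{\W(G),\Delta(G)+1,\ceil{\frac43\dclaw{G}}}$ minimizing $\size{G}$. Deleting an edge cannot increase $\W$, $\Delta$, or $\dclaw$, so by minimality $G$ is critical. Put $k\DefinedAs\chi'(G)-1$; then $k\ge\max\set{\W(G),\Delta(G)+1,\ceil{\frac43\dclaw{G}}}$, so in particular $k\ge\Delta(G)+1$, and $\chi'(G)=k+1>\W(G)$ means $G$ is not elementary.

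Since $G$ is critical and non-elementary with $k\ge\Delta(G)+1$, Theorem~\ref{AllSpecialImpliesElementary} rules out the possibility that every vertex of $G$ is short, so $G$ has a long vertex $x$. By definition there are an edge $xy$ incident to $x$, a $k$-edge-coloring $\vph$ of $G-xy$, and a Vizing fan $F$ rooted at $x$ with $\card{F}\ge 4$. Since $F$ is a fan rooted at $x$, every vertex of $V(F)-x$ lies in $N(x)$, and $y\in V(F)-x$; in particular $\card{N(x)}\ge 3$. Fix any $S\subseteq V(F)-x$ with $\card{S}=3$.

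The core of the argument is the standard ``elementary fan'' count. By Tashkinov's Lemma, $V(F)$ is elementary w.r.t.~$\vph$, so the sets $\vphn(v)$ for $v\in V(F)$ are pairwise disjoint subsets of $[k]$, and $\card{\vphn(v)}=k-d_{G-xy}(v)$, which equals $k-d(v)$ for $v\notin\set{x,y}$ and $k-d(v)+1$ for $v\in\set{x,y}$. Summing $\card{\vphn(v)}$ over $\set{x}\cup S$ when $y\in S$, or over $\set{x,y}\cup S$ when $y\notin S$ (this latter case also using $d(y)\le\Delta(G)\le k-1$), yields in both cases
\[d(x)+\sum_{v\in S}d(v)\ge 3k+2.\]
Since $S\subseteq N(x)$ with $\card{S}=3$, the definition of claw-degree gives $4\dclaw{G}\ge 4\dclaw{x}\ge d(x)+\sum_{v\in S}d(v)\ge 3k+2>3k$, so $k<\frac43\dclaw{G}\le\ceil{\frac43\dclaw{G}}$, contradicting $k\ge\ceil{\frac43\dclaw{G}}$.

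I do not expect a genuine obstacle here: the whole proof is Claim~1 in the proof of Theorem~\ref{mainhelper}, specialized to a three-element set and stripped of the $\frac56$-specific degree substitution, together with the observation that a long vertex supplies such a set. The only point needing a little care is the bookkeeping around the endpoint $y$ of the uncolored edge --- whether or not $y$ lies in the chosen set $S$ --- which is why the displayed inequality is obtained via a short case split rather than a single count.
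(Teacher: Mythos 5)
Your proof is correct and follows essentially the same route as the paper's: minimal counterexample, criticality, Theorem~\ref{AllSpecialImpliesElementary} to produce a long vertex, and the elementary-fan color count yielding $d(x)+\sum_{v\in S}d(v)\ge 3k+2$ and hence $k<\frac43\dclaw{G}$. The only difference is that you make explicit the bookkeeping for the uncolored edge's endpoint (the case split on whether $y\in S$), which the paper absorbs into the single inequality $2+k-d(x)+\sum_{i}(k-d(y_i))\le k$ by taking $y_1=y$.
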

\begin{proof}
Suppose not and choose a counterexample $G$ with the fewest edges; note that $G$
is critical. 
Let $k=\chi'(G)-1$, so $k \ge \ceil{\frac43\dclaw{G}}$. 
By Theorem \ref{AllSpecialImpliesElementary}, $G$ has a long vertex $x$.
Choose $xy_1 \in E(G)$ and a $k$-edge-coloring $\vph$ of $G - xy_1$ such that
$\vph$ has a fan $F$ of length $3$ rooted at $x$ with leaves $y_1, y_2, y_3$.  
Since no two vertices of $F$ miss a common color,
\[2 + k - d(x) + \sum_{i \in \irange{3}} k-d(y_i) \le k,\]
and hence
\[
\frac{3k+2}{4}
\le 
\frac14\parens{d(x) + \sum_{i \in \irange{3}} d(y_i)} 
\le 
\dclaw{x}.
\]
This gives the contradiction
\[\ceil{\frac43\dclaw{G}} \le k \le \frac43\dclaw{G} - \frac23.\]
\aftermath
\end{proof}

Reed~\cite{Reed1998omega} conjectured that $\chi(G)\le
\ceil{\frac{\omega(G)+\Delta+1}2}$ for every
graph $G$.  This is the average of a trivial lower bound $\omega(G)$ and a
trivial upper bound $\Delta(G)+1$.  King~\cite{King-diss} conjectured the stronger
bound $\chi(G)\le \max_{v\in V(G)}\ceil{\frac{\omega(v)+d(v)+1}2}$, where
$\omega(v)$ is the size of the largest clique containing $v$; this bound is now known
to hold for many classes of graphs, including line graphs~\cite{CKPS}.  Here we
show that for line graphs this bound is an easy consequence of our more general
lemmas from Section~\ref{sec:short}.  A \emph{thickened cycle} is a multigraph
that has a cycle as its underlying simple graph.

\begin{thm}
\label{EasyBound2}
If $G$ is a critical graph that is not a thickened cycle, then
\[\chi'(G) \le \max\set{\Delta(G) + 1, \ceil{\frac43\dclaw{G}}}.\]
\end{thm}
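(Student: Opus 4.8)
The plan is to mimic the proof of Theorem~\ref{EasyBound} and then force a structural contradiction. Suppose the statement fails: let $G$ be critical, not a thickened cycle, with $\chi'(G)>\max\set{\Delta(G)+1,\ceil{\tfrac43\dclaw{G}}}$, and set $k=\chi'(G)-1$, so $k\ge\Delta(G)+1$ and $k\ge\ceil{\tfrac43\dclaw{G}}$. First I would show $G$ has \emph{no long vertex}: if $x$ were long it would root a Vizing fan with at least four vertices, and truncating after the third leaf (the colour constraints on the first three edges reference only the first three vertices, so this is still a fan) gives a fan $F$ at $x$ with leaves $y_1,y_2,y_3$. Since $k\ge\Delta(G)+1$, Tashkinov's Lemma makes $V(F)$ elementary, so $\vphn(x),\vphn(y_1),\vphn(y_2),\vphn(y_3)$ are disjoint subsets of $[k]$; counting colours gives $2+(k-d(x))+\sum_i(k-d(y_i))\le k$, i.e.\ $\tfrac{3k+2}{4}\le\tfrac14\parens{d(x)+\sum_i d(y_i)}\le\dclaw{x}\le\dclaw{G}$, so $k\le\tfrac43\dclaw{G}-\tfrac23$, contradicting $k\ge\ceil{\tfrac43\dclaw{G}}$.

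Consequently every vertex of $G$ is short, so Theorem~\ref{AllSpecialImpliesElementary} gives $\chi'(G)\le\max\set{\W(G),\Delta(G)+1}$, and since $\chi'(G)>\Delta(G)+1$ we conclude $\chi'(G)=\W(G)$: $G$ is elementary. (One may instead just quote Theorem~\ref{EasyBound}.) By Proposition~\ref{easy-prop}, $\size{G}$ is odd and $\card{G}$ is odd; hence for any edge $e$ and any $k$-edge-colouring $\vph$ of $G-e$, the numbers $\card{\vphn(w)}$ sum to $k$, and since no colour can be missing at no vertex (a colour class is a matching on an odd number of vertices), the sets $\vphn(w)$, $w\in V(G)$, partition $[k]$ --- \emph{every colour is missing at exactly one vertex}.

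It remains to derive a contradiction from the hypothesis that $G$ is not a thickened cycle. As $G$ is critical, its underlying simple graph is $2$-connected with minimum degree $\ge2$, so $G$ is a thickened cycle unless some vertex $x$ has $\card{N(x)}\ge3$; fix such an $x$, a neighbour $y$, and a $k$-edge-colouring $\vph$ of $G-xy$. Using the missing-exactly-once structure, I would grow a Vizing fan at $x$ from the edge $xy$: the two-vertex fan extends because $y$ misses $k-d(y)+1\ge2$ colours, each of which (being missing nowhere else, hence outside $\vphn(x)$) appears on some edge at $x$ --- and, for a suitable choice of $y$ among the $\ge3$ neighbours of $x$, on an edge from $x$ to a third neighbour; repeating once more yields a fan with at least four vertices, so $x$ is long, contradicting the previous paragraph. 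Hence $G$ is a thickened cycle, contrary to hypothesis. The main obstacle is precisely this last fan-growth step: at each stage one needs an edge from $x$ to a \emph{new} neighbour carrying a colour missed by some already-chosen fan vertex, whereas a priori all such colours could lie only on edges running back into the fan (for instance along a high-multiplicity edge $xy$). Resolving this is where both $\card{N(x)}\ge3$ and the freedom in choosing which edge at $x$ to uncolour must be used --- equivalently, one analyses a \emph{maximal} fan at $x$ (at most three vertices, since $x$ is short), deduces that the colours missed by its vertices avoid all remaining edges at $x$, and pushes this to a contradiction via a Kempe-chain swap along a path between two such missing colours.
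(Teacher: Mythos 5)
Your first half is sound and matches the paper: ruling out long vertices via the truncated fan and the elementary count $2+(k-d(x))+\sum_i(k-d(y_i))\le k$ is exactly the proof of Theorem~\ref{EasyBound}, and quoting that theorem to conclude $G$ is elementary is precisely how the paper's proof of Theorem~\ref{EasyBound2} begins. Your observation that in a critical elementary graph every colour is missing at exactly one vertex is also correct (it follows from $2(\size{G}-1)=k(\card{G}-1)$ and the fact that a matching on an odd number of vertices misses a vertex).

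The second half, however, has a genuine gap --- one you yourself flag. You want to show that a vertex $x$ with $\card{N(x)}\ge 3$ must be long by growing a fan at $x$, but the growth step fails for exactly the reason you name: every colour missed by a fan vertex may appear at $x$ only on edges parallel to edges already in the fan, and nothing in ``each colour is missing at exactly one vertex'' prevents this. Gesturing at ``a suitable choice of $y$,'' ``the freedom in choosing which edge to uncolour,'' and ``a Kempe-chain swap'' does not resolve it, and note that your proposed endgame never reuses the hypothesis $k\ge\ceil{\frac43\dclaw{G}}$ after the first paragraph --- so you would in effect be proving that every critical elementary graph with $\chi'\ge\Delta+2$ and all vertices short is a thickened cycle, a much stronger structural claim that is not established. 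The paper avoids this entirely: having reduced to the elementary case, it runs a counting argument rather than a structural one. It fixes $x$ with $M=\card{N(x)}\ge 3$, uses Proposition~\ref{easy-prop} to express $P=\sum_{v\in N(x)}d_G(v)$ in terms of $\Delta(G)$, $\card{G}$, and slack terms, and then applies the claw-degree bound a \emph{second} time, averaged over all $M$ cyclically consecutive triples in $N(x)$ (the quantity $R$ with $Mk-S_4=\frac{M}{3}d_G(x)+P$); solving for $M$ yields either $\parens{1+\frac{S_5}{3}}d_G(x)\le d_G(x)-2$ or $\card{G}\le\card{N(x)}\le\card{G}-1$, both absurd. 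To complete your write-up you would either need to supply the missing fan-growth/Kempe argument in full (which appears genuinely hard, if it is true at all) or switch to this counting scheme.
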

\begin{proof}
The proof is in some ways similar to those of Lemmas~\ref{Slacked}
and~\ref{DegreeBoundedForMiddling}.  It consists largely of straightforward,
albeit tedious, algebraic manipulations.

Suppose the theorem is false and let $G$ be a counterexample. By Theorem
\ref{EasyBound}, $G$ is elementary.
Since $G$ is critical and elementary, Proposition~\ref{easy-prop} implies that
$\card{G}$ is odd and
\begin{equation}\label{eq11}
k = \frac{2(\size{G} - 1)}{\card{G} - 1}.
\end{equation}
Let $x \in V(G)$ with $\card{N(x)} \ge 3$. Put $M \DefinedAs \card{N(x)}$, 
\[P \DefinedAs \sum_{v \in N(x)} d_G(v),\]
\[S_2 \DefinedAs 2 + \sum_{v \in V(G) \setminus N(x)} \Delta(G) - d_G(v),\]
\[S_3 \DefinedAs k - (\Delta(G) + 1).\] 
Now
\begin{equation}\label{eq12}
2(\size{G} - 1) = \Delta(G)(\card{G} - M) - S_2 + P.
\end{equation}
Since 
\[\frac{2(\size{G} - 1)}{\card{G} - 1} = k = \Delta(G) + 1 + S_3,\]
using \eqref{eq12}, we get
\[P = (\card{G} - 1)(\Delta(G) + 1 + S_3) - \Delta(G)(\card{G} - M) + S_2,\]
which we rewrite as
\begin{equation}\label{eq13}
P = \Delta(G)(M-1) + \card{G} - 1 + S_2 + S_3(\card{G} - 1).
\end{equation}

Let $N(x) = \set{v_0, v_1, \ldots, v_{M-1}}$ and put
\[R \DefinedAs \sum_{i=0}^{M - 1} \frac13 \parens{d_G(x) + d_G(v_i) + d_G(v_{i+1}) + d_G(v_{i+2})},\]
where indices are taken modulo $M$.  Since $k \ge \frac43\dclaw{G}$, there is $S_4 \ge 0$ such that
\[Mk - S_4 = R = \frac{M}{3}d_G(x) + P.\]
Now substituting \eqref{eq13} we get
\[Mk = \frac{M}{3}d_G(x) + \Delta(G)(M-1) + \card{G} - 1 + S_2 + S_3(\card{G} - 1) + S_4.\]
Since $M\ge 3$ by our choice of $x$, 
solving for $M$ gives (for some $S_5 \ge 0$)
\[3 + S_5 = M =  \frac{S_2 + (S_3 + 1)(\card{G} - 1) + S_4 -\Delta(G)}{S_3 + 1
- \frac13d_G(x)}. \]
Hence
\[(3 + S_5)(S_3 + 1) - \parens{1 + \frac{S_5}{3}}d_G(x) = S_2 + (S_3 +
1)(\card{G} - 1) + S_4 -\Delta(G).\]
Rearranging terms gives
\[\parens{1 + \frac{S_5}{3}}d_G(x) = \Delta(G)  - (S_2 - 2) + \parens{4 + S_5 -
\card{G}}S_3 + \parens{2 + S_5 - \card{G}} - S_4.\]
Suppose $4 + S_5 - \card{G} \le 0$.  Now
\[\parens{1 + \frac{S_5}{3}}d_G(x) \le \Delta(G)  - (S_2 - 2) - 2.\]
By definition, $S_2 \ge 2 + \Delta(G) - d_G(x)$, so we have
\[\parens{1 + \frac{S_5}{3}}d_G(x) \le d_G(x) - 2,\]
a contradiction since $S_5 \ge 0$.  So, we must have $4 + S_5 - \card{G} > 0$, that is, 
\[\card{G} \le S_5 + 3 = \card{N(x)} \le \card{G} - 1,\] a contradiction.
\end{proof}
For a graph $Q$ and $r \in \IN \cup \set{\infty}$, put
\[\C_r(Q) \DefinedAs \setbs{X \subseteq V(Q)}{X \text{ is a maximal clique with } \card{X} < r \text { or } X \text { is a clique with } \card{X} = r}.\]  
Put
\[\gamma_r(Q) \DefinedAs \max_{X \in \C_r(Q)} \frac{1}{\card{X}}\sum_{v \in X} \frac{d(v) + \omega(v) + 1}{2}.\]
In terms of $\gamma_r$, the local version of Reed's conjecture for line graphs,
proved by Chudnovsky et al.~\cite{CKPS}, and the superlocal version, proved by
Edwards and King~\cite{EK-superlocal}, are the following two theorems.

\begin{thm}[Chudnovsky et al.]
If $Q$ is a line graph, then $\chi(Q) \le \ceil{\gamma_1(Q)}$.
\label{LocalReed}
\end{thm}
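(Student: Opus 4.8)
The plan is to deduce this from Theorem~\ref{EasyBound2} (using Theorem~\ref{EasyBound} to dispose of thickened cycles), by checking that every term on the right-hand sides of those two theorems is at most $\ceil{\gamma_1(Q)}$. Write $Q=L(G)$. First I would reduce to the case that $G$ is critical (hence connected): if $\widehat G\subseteq G$ is critical with $\chi'(\widehat G)=\chi'(G)$, then $L(\widehat G)$ is an \emph{induced} subgraph of $L(G)$, so no vertex degree and no clique size increases, whence $\gamma_1(L(\widehat G))\le\gamma_1(L(G))$, and it suffices to bound $\chi'(\widehat G)$.

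The two inequalities doing the real work are $\gamma_1(L(G))\ge\Delta(G)$ and $\gamma_1(L(G))\ge\frac43\dclaw{G}$. For the first, let $e=zy$ be an edge of $G$ at a vertex $z$ with $d_G(z)=\Delta(G)$; the $\Delta(G)$ edges of $G$ at $z$ form a clique of $L(G)$ containing $e$, and $d_{L(G)}(e)=d_G(z)+d_G(y)-\mu(zy)-1\ge\Delta(G)-1$ since $d_G(y)\ge\mu(zy)$, so already $\gamma_1(L(G))\ge\frac12\parens{(\Delta(G)-1)+\Delta(G)+1}=\Delta(G)$. For the second, fix $x$ with $\card{N(x)}\ge3$ and a $3$-set $S=\set{v_1,v_2,v_3}\subseteq N(x)$, and for each $j\in\irange{3}$ pick one edge $e_j$ of the bundle $xv_j$. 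Since the edges at $x$ and the edges at $v_j$ are both cliques of $L(G)$ through $e_j$, for each $j$ we get
\[\gamma_1(L(G))\ge\tfrac12\parens{d_G(x)+d_G(v_j)-\mu(xv_j)+\max\parens{d_G(x),d_G(v_j)}}.\]
If all three of these lower bounds were strictly less than $\frac13\parens{d_G(x)+\sum_{j}d_G(v_j)}$, then summing the corresponding inequalities over $j\in\irange{3}$ and using $\sum_{j}\mu(xv_j)\le d_G(x)$ would force $\sum_{j}\max\parens{d_G(x),d_G(v_j)}<\sum_{j}d_G(v_j)$, which is impossible. Hence some $j$ yields $\gamma_1(L(G))\ge\frac13\parens{d_G(x)+\sum_{j}d_G(v_j)}$, and maximizing over $x$ and $S$ gives $\gamma_1(L(G))\ge\frac43\dclaw{G}$.

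To finish, first suppose $G$ is a thickened cycle, so $\dclaw{G}=0$ and Theorem~\ref{EasyBound} gives $\chi'(G)\le\max\set{\W(G),\Delta(G)+1}$. Here $\chi'(G)\le\ceil{\gamma_1(L(G))}$ follows from a direct computation: a thickened triangle has $L(G)=K_{\size{G}}$, so $\gamma_1(L(G))=\size{G}=\chi'(G)$; an even cycle has $\chi'(G)=\Delta(G)\le\gamma_1(L(G))$; and for an odd cycle of length $n\ge5$ with bundle multiplicities $x_1,\dots,x_n$ (indices mod $n$) one computes from the definition that $\W(G)=\max\set{\Delta(G),\ceil{2\size{G}/(n-1)}}$, while $\gamma_1(L(G))\ge x_i+\max(x_{i-1},x_{i+1})+\frac12\min(x_{i-1},x_{i+1})$ for each $i$, and averaging over $i$ gives $\gamma_1(L(G))\ge 5\size{G}/(2n)\ge 2\size{G}/(n-1)$, so $\ceil{\gamma_1(L(G))}\ge\W(G)$; in every case the possibility $\chi'(G)=\Delta(G)+1$ is handled as below. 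So assume $G$ is not a thickened cycle; then Theorem~\ref{EasyBound2} gives $\chi'(G)\le\max\set{\Delta(G)+1,\ceil{\frac43\dclaw{G}}}$. If the second term is the larger, $\gamma_1(L(G))\ge\frac43\dclaw{G}$ finishes the proof. Otherwise $\chi'(G)\le\Delta(G)+1$; if $\chi'(G)\le\Delta(G)$ we are done by $\gamma_1(L(G))\ge\Delta(G)$, and if $\chi'(G)=\Delta(G)+1$ then I claim $\gamma_1(L(G))>\Delta(G)$: were this false, the argument giving $\gamma_1(L(G))\ge\Delta(G)$, applied at a vertex $z$ of degree $\Delta(G)$, would force $d_G(y)=\mu(zy)$ --- i.e.\ $y$ joined only to $z$ --- for every $y\in N(z)$, and since $G$ is connected this makes $G$ a ``star of bundles'', which is bipartite, so $\chi'(G)=\Delta(G)$, a contradiction.

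The counting step proving $\gamma_1(L(G))\ge\frac43\dclaw{G}$ is short, so the real obstacle is the $\Delta(G)+1$ term: Theorem~\ref{EasyBound2} only bounds $\chi'(G)$ by $\Delta(G)+1$, and in general $\Delta(G)+1>\ceil{\gamma_1(L(G))}$ is possible (for instance when $G$ is a single bundle). Thus the crux is the structural step showing that a critical graph $G$ with $\chi'(G)=\Delta(G)+1$ cannot have every neighbor of a maximum-degree vertex pendant to it, together with the elementary small-case analysis for thickened odd cycles, where $\W(G)$ rather than $\Delta(G)+1$ can be the governing term.
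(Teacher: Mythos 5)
Your proof is correct. It is worth noting that the paper never proves Theorem~\ref{LocalReed} from scratch: it is stated as a cited result of Chudnovsky et al., observed to follow from the superlocal version (Theorem~\ref{KingEdwardsSuperLocal}), and subsumed by the paper's own Theorem~\ref{SuperDuperLocalReed}, since $\tilde{\gamma}_3(Q)\le\gamma_2(Q)\le\gamma_1(Q)$. Your argument is a self-contained derivation whose architecture matches the proof of that stronger theorem --- reduce to a critical $G$, split on whether $G$ is a thickened cycle, and combine Theorems~\ref{EasyBound} and~\ref{EasyBound2} with lower bounds on $\gamma_1$ read off from the two edge-cliques through a vertex of $L(G)$ --- but it is genuinely simpler at the two places where only $\gamma_1$ is needed. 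For the bound $\gamma_1(L(G))\ge\frac43\dclaw{G}$, Lemma~\ref{SuperDuperLocalReedHelper} must average the quantity $f(xv_i)$ over all three claw edges to control $\gamma_3$; you instead take the best of the three edges, and your pigeonhole step (summing the three inequalities and using $\sum_j\mu(xv_j)\le d_G(x)$ together with $\max(d_G(x),d_G(v_j))\ge d_G(v_j)$) is valid. For the $\Delta(G)+1$ term, the paper argues through a maximum clique of $Q$, while your ``star of bundles'' observation is more elementary: it shows $\ceil{\gamma_1(L(G))}\ge\Delta(G)+1$ unless every neighbor of a maximum-degree vertex sends all its edges to it, in which case $G$ is bipartite and $\chi'(G)=\Delta(G)$ anyway. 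Finally, your thickened odd-cycle computation ($\gamma_1(L(G))\ge 5\size{G}/(2n)\ge 2\size{G}/(n-1)$ for $n\ge5$, together with $\gamma_1(L(G))\ge\Delta(G)$ and the identity $\W(G)=\max\set{\Delta(G),\ceil{2\size{G}/(n-1)}}$ for thickened odd cycles) parallels the averaging at the end of the paper's proof of Theorem~\ref{SuperDuperLocalReed}; the $\W(G)$ identity is the one step you assert ``from the definition'' rather than verify, but it does check out, since every proper subgraph of a thickened cycle is a union of thickened paths and contributes at most $\Delta(G)$.
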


\begin{thm}[Edwards and King]
If $Q$ is a line graph, then $\chi(Q) \le \ceil{\gamma_2(Q)}$.
\label{KingEdwardsSuperLocal}
\end{thm}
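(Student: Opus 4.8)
The plan is to deduce the theorem from Theorem~\ref{EasyBound2}, turning the two bounds there, on $\Delta(G)$ and on $\dclaw{G}$, into statements about $2$-cliques of the line graph. Suppose $Q=L(G)$ violates the bound. Replacing $G$ by a critical subgraph with the same chromatic index replaces $Q$ by an induced subgraph of itself; since every $2$-clique of the smaller line graph is a $2$-clique of $Q$ of no larger value, and an isolated vertex of a line graph contributes only $1\le\gamma_2(Q)$, this does not increase $\gamma_2$. So we may assume $G$ is critical, hence connected. We may also assume $Q$ is nonempty and not complete (if $Q$ is complete then $\chi(Q)=\card{Q}=\gamma_2(Q)$, no counterexample), so $\card{G}\ge 3$ and $G$ is not a star of parallel edges. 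Write $k+1\DefinedAs\chi'(G)=\chi(Q)$, so $\gamma_2(Q)\le k$; we will reach the contradiction $\gamma_2(Q)>k$, handling the thickened-cycle case last.

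For an edge $e=uv$ of $G$ regarded as a vertex of $Q$, we have $d_Q(e)=d_G(u)+d_G(v)-\mu_G(uv)-1$ and $\omega_Q(e)\ge\max\set{d_G(u),d_G(v)}$ (all edges of $G$ at $u$, or all those at $v$, induce a clique of $Q$ through $e$). Any two edges of $G$ sharing an endpoint form a clique $X$ of size $2$, hence lie in $\C_2(Q)$, and its \emph{value} $\tfrac14\parens{\sum_{w\in X}\parens{d_Q(w)+\omega_Q(w)}+2}$ is at most $\gamma_2(Q)$. If $G$ is not a thickened cycle, Theorem~\ref{EasyBound2} gives $k+1\le\max\set{\Delta(G)+1,\ceil{\tfrac43\dclaw{G}}}$. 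If $\Delta(G)\ge k$, pick a vertex $z$ of maximum degree with two neighbors $v,w$ such that $v$ has a neighbor other than $z$ (such a $z$ exists: if a maximum-degree vertex had a single neighbor, connectedness and $\card{G}\ge 3$ would force a neighbor of larger degree, and if no maximum-degree vertex had a neighbor outside its own parallel class then $G$ would be a star of parallel edges); then $d_Q(zv)\ge\Delta(G)$, $d_Q(zw)\ge\Delta(G)-1$, and $\omega_Q(zv),\omega_Q(zw)\ge\Delta(G)$, so $\set{zv,zw}$ has value at least $\Delta(G)+\tfrac14\ge k+\tfrac14$. If instead $\ceil{\tfrac43\dclaw{G}}\ge k+1$, then $\dclaw{G}>\tfrac34 k$, so there are $x\in V(G)$ and distinct $v_1,v_2,v_3\in N(x)$ with $d_G(x)+\sum_i d_G(v_i)>3k$; summing the value inequality over the three pairs $\set{xv_i,xv_j}$ (each $xv_i$ appearing in two pairs), and using $\omega_Q(xv_i)\ge d_G(v_i)$ together with $\sum_i\mu_G(xv_i)\le d_G(x)$, a short calculation shows the three values add up to at least $d_G(x)+\sum_i d_G(v_i)>3k$, so some one of them—hence $\gamma_2(Q)$—exceeds $k$. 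Either way we contradict $\gamma_2(Q)\le k$.

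It remains to treat the case where $G$ is a thickened cycle, on an underlying cycle of length $n$ with edge-multiplicities $x_1,\dots,x_n$ (indices mod $n$); put $X=\sum_i x_i$. For each cycle-vertex $i$, the two incident edges, one from class $i-1$ and one from class $i$, form a $2$-clique whose value is at least $\tfrac14\parens{x_{i-2}+4x_{i-1}+4x_i+x_{i+1}}$, using $\omega_Q\ge x_{i-1}+x_i$ for each. If $\chi'(G)=\Delta(G)$ (in particular whenever $n$ is even, by König's theorem), then choosing $i$ with $x_{i-1}+x_i=\Delta(G)$ makes this value at least $\Delta(G)+\tfrac12>k$. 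Otherwise $n=2t+1$ is odd and, by the formula for the chromatic index of thickened cycles, $\chi'(G)=\ceil{X/t}$; when $t=1$ the graph $Q$ is complete, already excluded, and when $t\ge 2$ the average of the displayed bound over all $i$ equals $\tfrac{5X}{2(2t+1)}$, which exceeds $k$ because $X>kt$ and $\tfrac{5t}{2(2t+1)}\ge 1$. In every case $\gamma_2(Q)>k$, the desired contradiction. (An analogous argument, working with the maximal clique of all edges at a vertex in place of the $2$-cliques, yields Theorem~\ref{LocalReed}.)

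The crux is calibrating the constants in the passage from $\dclaw{G}$ to $\gamma_2(Q)$: the estimate closes only because we bound $\omega_Q(xv_i)$ below by $d_G(v_i)$, whereas the coarser bound $\omega_Q(xv_i)\ge d_G(x)$ would fail when $d_G(x)$ is small. The only part needing separate bookkeeping is the thickened-cycle case, which is routine once one observes that a $2$-clique seated at a cycle-vertex already carries enough weight, and the remaining degenerate configurations (stars of parallel edges, complete line graphs) satisfy the bound trivially.
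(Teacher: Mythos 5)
Your proof is correct, but it is worth noting that the paper does not prove this theorem itself: it cites it as a result of Edwards and King and, in effect, re-derives it only as a consequence of the stronger Theorem~\ref{SuperDuperLocalReed} (since $\tilde{\gamma}_3(Q)\le\gamma_2(Q)$ by the monotonicity of $\gamma_2$). Your direct derivation from Theorem~\ref{EasyBound2} closely parallels the paper's proof of that stronger statement (Lemma~\ref{SuperDuperLocalReedHelper} plus the thickened-cycle analysis), but specialized to $2$-cliques: your conversion of the $\dclaw{G}$ bound, summing the value inequality over the three pairs $\set{xv_i,xv_j}$ and using $\omega_Q(xv_i)\ge d_G(v_i)$ and $\sum_i\mu(xv_i)\le d_G(x)$, is the $2$-clique analogue of the paper's estimate $\ceil{\frac43\dclaw{G}}\le\ceil{\gamma_3(Q)}$, and your averaging of $\frac14(x_{i-2}+4x_{i-1}+4x_i+x_{i+1})$ to get $\frac{5X}{2(2t+1)}$ matches the paper's $\frac{5X}{4t+2}$ computation. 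The only soft spot is your appeal to ``the formula for the chromatic index of thickened cycles,'' $\chi'(G)=\ceil{X/t}$ when $\chi'(G)\ne\Delta(G)$; this is classical but unproved here, and it silently rules out $\chi'(G)=\Delta(G)+1>\ceil{X/t}$. The paper avoids this by first establishing the bound $\ge\Delta(G)+1$ and then invoking Theorem~\ref{AllSpecialImpliesElementary} (every vertex of a thickened cycle is short, so a critical one with $\chi'>\Delta+1$ is elementary); the same patch is available to you, since your first case already shows $\gamma_2(Q)\ge\Delta(G)+\frac12$ for every thickened cycle, hence $\ceil{\gamma_2(Q)}\ge\Delta(G)+1$, unconditionally.
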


Note that if $a, b \in \IN \cup \set{\infty}$ with $a \le b$, then $\gamma_a(Q) \ge \gamma_b(Q)$, so Theorem \ref{KingEdwardsSuperLocal} implies Theorem \ref{LocalReed}.  
Edwards and King~\cite{EK-superlocal} conjectured that $\gamma_\infty(Q)$ is an
upper bound on the fractional chromatic number for all graphs $Q$.

\begin{conj}[Edwards and King]
If $Q$ is any graph, then $\chi_f(Q) \le \gamma_\infty(Q)$.
\label{EKconj}
\end{conj}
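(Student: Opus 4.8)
This statement is a conjecture of Edwards and King and, to our knowledge, is still open; what follows is therefore a proposal for an attack rather than a complete proof.

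The natural reformulation is via LP duality. Since $\chi_f(Q)$ equals the maximum of $\sum_{v\in V(Q)} w(v)$ over all fractional cliques $w\colon V(Q)\to[0,1]$ (those satisfying $\sum_{v\in I}w(v)\le 1$ for every independent set $I$), it suffices to fix an optimal fractional clique $w$ and to exhibit a single maximal clique $X\in\C_\infty(Q)$ with
\[
\sum_{v\in V(Q)} w(v)\ \le\ \frac1{\card{X}}\sum_{v\in X}\frac{d(v)+\omega(v)+1}{2}.
\]
So the task is: given $w$, locate a maximal clique on which the local Reed quantity $\frac{d+\omega+1}{2}$ is large on average.

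First I would reduce to a vertex-minimal counterexample $Q$. One then wants to argue that $Q$ is highly structured --- ideally $\chi_f$-vertex-critical, so that every vertex lies in the support of some optimal fractional clique --- although making this precise for $\chi_f$ (rather than for the integer chromatic number) needs some care, since deleting a vertex need not decrease $\chi_f$. We may also assume $\chi_f(Q)>\omega(Q)$: indeed $\gamma_\infty(Q)\ge\omega(Q)$, because averaging $\frac{d(v)+\omega(v)+1}{2}$ over a maximum clique $X$ gives at least $\frac{(\omega-1)+\omega+1}{2}=\omega$, as each $v\in X$ has $\omega(v)=\omega$ and $d(v)\ge\omega-1$, and a maximum clique is maximal, hence lies in $\C_\infty(Q)$.

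The heart of the matter is the step that selects $X$ from the support of $w$, and this is where I expect the real obstacle. One would like to run a local-density dichotomy in the spirit of the line-graph arguments developed earlier: either $Q$ has a vertex $v$ whose neighborhood is far from being a clique --- in which case a fractional Kempe-type recoloring around $v$ should create enough slack that a small clique through $v$ can absorb all of $w$ --- or every neighborhood is nearly a clique, in which case $Q$ is essentially a clique-sum of near-cliques and one can argue clique by clique. The difficulty is that the Tashkinov-tree and fan machinery (short/long vertices, the bound $d(v)<\tfrac34\Delta$ for long vertices, elementarity of $V(T)$) is intrinsically about edge-colorings and has no evident counterpart for the fractional chromatic number of an arbitrary graph; and even the weaker global bound $\chi_f(Q)\le\frac{\Delta(Q)+1+\omega(Q)}{2}$ (the fractional relaxation of Reed's conjecture) does not obviously localize to $\gamma_\infty$, since an optimal fractional clique need not be concentrated near any single clique of $Q$. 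Controlling that spreading-out of $w$ against the clique-averaging built into $\gamma_\infty$ is exactly the point at which a genuinely new idea seems to be required.
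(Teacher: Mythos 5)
You are correct that this statement appears in the paper only as a conjecture attributed to Edwards and King (Conjecture~\ref{EKconj}); the paper offers no proof of it, so there is nothing to compare your attempt against, and your honest assessment that it remains open is the right answer. Your supporting observations are accurate --- the LP-duality reformulation, the bound $\gamma_\infty(Q)\ge\omega(Q)$ obtained by averaging $\frac{d(v)+\omega(v)+1}{2}$ over a maximum clique, and the point that the Tashkinov-tree/fan machinery is intrinsic to edge-colorings and does not transfer to $\chi_f$ of arbitrary graphs --- and the paper's only progress in this direction is to introduce the monotone variant $\tilde{\gamma}_\infty$ (Conjectures 13 and 14) and to prove $\chi(Q)\le\ceil{\tilde{\gamma}_3(Q)}$ for line graphs.
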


A graph parameter $f$ is \emph{monotone} if $f(G)\ge f(H)$ whenever $H$ is an
induced subgraph of $G$.
Unfortunately, $\gamma_\infty$ is not monotone and hence not very induction-friendly.
\begin{lem}
$\gamma_1$ and $\gamma_2$ are monotone, but $\gamma_r$ is not monotone for all
$r \ge 3$.
\end{lem}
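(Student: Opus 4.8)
The plan is to first pin down $\C_1$ and $\C_2$ explicitly, derive monotonicity there from the trivial monotonicity of $d(\cdot)$ and $\omega(\cdot)$, and then for each $r\ge 3$ produce an explicit pair $H\subseteq G$ (induced) with $\gamma_r(H)>\gamma_r(G)$.

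\textbf{The monotone cases.} One checks that $\C_1(Q)$ is exactly the set of singletons $\{v\}$, so $\gamma_1(Q)=\max_{v\in V(Q)}\tfrac{d(v)+\omega(v)+1}{2}$, and that $\C_2(Q)$ is exactly the set of edges of $Q$ together with the isolated vertices of $Q$. Monotonicity of $\gamma_1$ is then immediate: for $H$ induced in $G$ and $v\in V(H)$ we have $d_H(v)\le d_G(v)$ and $\omega_H(v)\le\omega_G(v)$, so the per-vertex term only grows. For $\gamma_2$, take $X\in\C_2(H)$ attaining $\gamma_2(H)$. If $X$ is an edge of $H$, it is also an edge of $G$, and both of its per-vertex terms grow, so $\gamma_2(G)\ge\gamma_2(H)$. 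If instead $X=\{v\}$ with $v$ isolated in $H$, then $\gamma_2(H)=1$; either $v$ is isolated in $G$ and $\{v\}\in\C_2(G)$ still contributes $1$, or $v$ has a neighbour $u$ in $G$ and then the edge $uv\in\C_2(G)$ contributes at least $2$ (both endpoints have degree $\ge 1$ and clique number $\ge 2$ in $G$). So $\gamma_2$ is monotone.

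\textbf{A counterexample for $r\ge 3$.} The phenomenon to exploit is that passing to an induced subgraph can make a maximal clique of size $<r$ reappear inside the unique $r$-clique of the larger graph, and that $r$-clique may be forced to include a low-degree vertex that pulls its average below the old value. Fix $r\ge 3$ and $m:=2r+1$. Let $G$ have vertices $a_1,\dots,a_{r-1}$, an extra vertex $w$, and pendants $p_{i,1},\dots,p_{i,m}$ for each $i$, with edges $a_ia_j$ (for $i\ne j$), $a_iw$ for all $i$, and $a_ip_{i,j}$ for all $i,j$; put $H:=G-w$. Since a pendant lies in no clique of size $>2$, the set $\{a_1,\dots,a_{r-1},w\}$ is the only $r$-clique of $G$, and $\C_r(G)$ consists of it together with the pendant edges; likewise $\{a_1,\dots,a_{r-1}\}$ is a maximal clique of size $r-1<r$ in $H$, and $\C_r(H)$ consists of it together with the pendant edges.

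\textbf{The computation and the main obstacle.} A routine degree/clique-number count gives $\gamma_r(H)=\tfrac{2r+m-2}{2}$, realised by $\{a_1,\dots,a_{r-1}\}$ (which for $r\ge 3$ beats every pendant edge), and $\gamma_r(G)=\tfrac{(r-1)(2r+m)}{2r}+1$, realised by $\{a_1,\dots,a_{r-1},w\}$ (which also beats every pendant edge when $r\ge 3$). Subtracting, $\gamma_r(H)-\gamma_r(G)=\tfrac{m-2r}{2r}=\tfrac{1}{2r}>0$, so $\gamma_r$ is not monotone. The one genuinely nonroutine step is locating this example: we need the absorbed clique in $G$ to be forced to acquire a vertex ($w$) of small degree $r-1$, and we must make sure that padding the $a_i$ with enough pendants to push the target value up does not simultaneously let a pendant edge overtake the intended clique as the maximiser — and checking that both verifications survive is precisely where the hypothesis $r\ge 3$ is used.
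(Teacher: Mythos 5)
Your proof is correct and follows essentially the same strategy as the paper: the monotonicity of $\gamma_1,\gamma_2$ is immediate from the monotonicity of $d(\cdot)$ and $\omega(\cdot)$, and non-monotonicity for $r\ge 3$ is witnessed by a graph in which deleting a low-degree vertex of the unique $r$-clique leaves behind a maximal clique with a strictly higher average. The only cosmetic difference is that you build an $r$-dependent family of examples, whereas the paper exhibits a single fixed graph (with the triangle $\{x,v_1,v_2\}$ lying in $\C_r$ for every $r\ge 3$) that works uniformly; your verification, including the role of the hypothesis $r\ge 3$ in ruling out the pendant edges as maximisers, checks out.
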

\begin{proof}
The first statement is clear.
For the second statement, let $Q$ be the graph in Figure~\ref{fig:NotMonotone}.
 For $r \ge 3$, we have $\gamma_r(Q -x) = \frac{11}{2} > \frac{16}{3} = \gamma_r(Q)$.
\end{proof}

\begin{figure}
\centering
\begin{tikzpicture}[scale = 10]
\tikzstyle{VertexStyle} = []
\tikzstyle{EdgeStyle} = []
\tikzstyle{labeledStyle}=[shape = circle, minimum size = 6pt, inner sep = 1.2pt, draw]
\tikzstyle{unlabeledStyle}=[shape = circle, minimum size = 6pt, inner sep = 1.2pt, draw, fill]
\Vertex[style = labeledStyle, x = 0.700, y = 0.900, L = \tiny {$x$}]{v0}
\Vertex[style = unlabeledStyle, x = 0.600, y = 0.750, L = \tiny {}]{v1}
\Vertex[style = unlabeledStyle, x = 0.800, y = 0.750, L = \tiny {}]{v2}
\Vertex[style = unlabeledStyle, x = 0.300, y = 0.300, L = \tiny {}]{v3}
\Vertex[style = unlabeledStyle, x = 1.100, y = 0.300, L = \tiny {}]{v4}
\Vertex[style = unlabeledStyle, x = 0.600, y = 0.600, L = \tiny {}]{v5}
\Vertex[style = unlabeledStyle, x = 0.800, y = 0.600, L = \tiny {}]{v6}
\Vertex[style = unlabeledStyle, x = 0.550, y = 0.550, L = \tiny {}]{v7}
\Vertex[style = unlabeledStyle, x = 0.850, y = 0.550, L = \tiny {}]{v8}
\Vertex[style = unlabeledStyle, x = 0.500, y = 0.500, L = \tiny {}]{v9}
\Vertex[style = unlabeledStyle, x = 0.900, y = 0.500, L = \tiny {}]{v10}
\Vertex[style = unlabeledStyle, x = 0.450, y = 0.450, L = \tiny {}]{v11}
\Vertex[style = unlabeledStyle, x = 0.950, y = 0.450, L = \tiny {}]{v12}
\Vertex[style = unlabeledStyle, x = 0.400, y = 0.400, L = \tiny {}]{v13}
\Vertex[style = unlabeledStyle, x = 1.000, y = 0.400, L = \tiny {}]{v14}
\Vertex[style = unlabeledStyle, x = 0.350, y = 0.350, L = \tiny {}]{v15}
\Vertex[style = unlabeledStyle, x = 1.050, y = 0.350, L = \tiny {}]{v16}
\Edge[label = \tiny {}, labelstyle={auto=right, fill=none}](v1)(v0)
\Edge[label = \tiny {}, labelstyle={auto=right, fill=none}](v1)(v2)
\Edge[label = \tiny {}, labelstyle={auto=right, fill=none}](v1)(v3)
\Edge[label = \tiny {}, labelstyle={auto=right, fill=none}](v1)(v5)
\Edge[label = \tiny {}, labelstyle={auto=right, fill=none}](v1)(v7)
\Edge[label = \tiny {}, labelstyle={auto=right, fill=none}](v1)(v9)
\Edge[label = \tiny {}, labelstyle={auto=right, fill=none}](v1)(v11)
\Edge[label = \tiny {}, labelstyle={auto=right, fill=none}](v1)(v13)
\Edge[label = \tiny {}, labelstyle={auto=right, fill=none}](v1)(v15)
\Edge[label = \tiny {}, labelstyle={auto=right, fill=none}](v2)(v0)
\Edge[label = \tiny {}, labelstyle={auto=right, fill=none}](v2)(v4)
\Edge[label = \tiny {}, labelstyle={auto=right, fill=none}](v2)(v6)
\Edge[label = \tiny {}, labelstyle={auto=right, fill=none}](v2)(v8)
\Edge[label = \tiny {}, labelstyle={auto=right, fill=none}](v2)(v10)
\Edge[label = \tiny {}, labelstyle={auto=right, fill=none}](v2)(v12)
\Edge[label = \tiny {}, labelstyle={auto=right, fill=none}](v2)(v14)
\Edge[label = \tiny {}, labelstyle={auto=right, fill=none}](v2)(v16)
\Edge[label = \tiny {}, labelstyle={auto=right, fill=none}](v4)(v3)
\Edge[label = \tiny {}, labelstyle={auto=right, fill=none}](v6)(v5)
\Edge[label = \tiny {}, labelstyle={auto=right, fill=none}](v8)(v7)
\Edge[label = \tiny {}, labelstyle={auto=right, fill=none}](v10)(v9)
\Edge[label = \tiny {}, labelstyle={auto=right, fill=none}](v12)(v11)
\Edge[label = \tiny {}, labelstyle={auto=right, fill=none}](v14)(v13)
\Edge[label = \tiny {}, labelstyle={auto=right, fill=none}](v16)(v15)
\end{tikzpicture}
\caption{A graph $Q$ where $\gamma_r(Q - x) > \gamma_r(Q)$ for all $r \ge 3$.}
\label{fig:NotMonotone}
\end{figure}
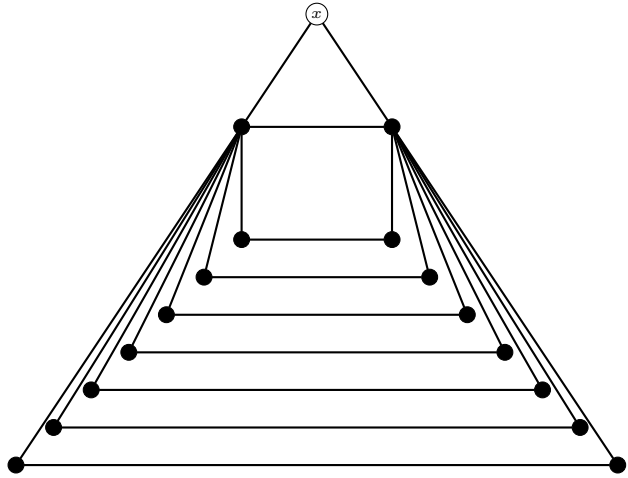

As a possible approach to Conjecture~\ref{EKconj}, we define the following
variation on $\gamma_{\infty}(G)$, which should be more induction-friendly.
For $r\in \IN \cup \set{\infty}$, put
\[\tilde{\gamma_r}(Q) \DefinedAs \max_{H \subseteq Q} \gamma_r(H).\]

\begin{conj}\label{KingEdwardsReplacement}
If $Q$ is any graph, then $\chi_f(Q) \le \tilde{\gamma}_\infty(Q)$.
\end{conj}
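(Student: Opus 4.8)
The plan is to argue by induction on $\card{Q}$, exploiting the fact that $\tilde{\gamma}_\infty$ is monotone under subgraphs \emph{by construction}, since $\tilde{\gamma}_\infty(Q)=\max_{H\subseteq Q}\gamma_\infty(H)$. Thus a minimal counterexample $Q$, say with $\chi_f(Q)>\tilde{\gamma}_\infty(Q)=:\gamma$, automatically satisfies $\chi_f(Q-v)\le\tilde{\gamma}_\infty(Q-v)\le\gamma$ for every $v\in V(Q)$; that is, $Q$ is critical with respect to the threshold $\gamma$ for the fractional chromatic number. One then passes to the linear-programming dual: write $\chi_f(Q)$ as the maximum total weight $\sum_{v}y(v)$ of a fractional clique $y\colon V(Q)\to\IR_{\ge 0}$ (so $\sum_{v\in I}y(v)\le 1$ for every independent set $I$), and fix an optimal such $y$, preferably an extreme point of the fractional-clique polytope. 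Criticality forces $y$ to have full support: if $y(v)=0$ then $y$ restricted to $Q-v$ is a feasible fractional clique of $Q-v$ of the same weight, so $\chi_f(Q-v)\ge\chi_f(Q)>\gamma$, a contradiction. Complementary slackness then says the corresponding optimal fractional coloring uses every color class in its support tightly and covers each vertex exactly, so $Q$ carries no redundant structure.

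The heart of the argument --- and the step I do not see how to push through --- is to convert this tightness into a subgraph $H'\subseteq Q$ together with a maximal clique $X$ of $H'$ for which
\[
\frac{1}{\card{X}}\sum_{v\in X}\frac{d_{H'}(v)+\omega_{H'}(v)+1}{2}\ \ge\ \chi_f(Q),
\]
contradicting the definition of $\gamma$. Taking $H'$ to be a \emph{non-induced} subgraph is essential here: it lets us delete edges so that a chosen vertex lies in a small maximal clique of $H'$ while its degree is undiminished, which is the only lever for keeping the clique-average of $\tfrac12(d+\omega+1)$ in balance with what the degrees allow. (Note $\tilde{\gamma}_\infty(Q)\ge\omega(Q)$, by taking $H'$ a maximum clique, and that on $C_5$ the bound is already tight; the real content lies in the regime $\omega(Q)<\chi_f(Q)<\Delta(Q)+1$. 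The conjecture does not reduce to the fractional form of Reed's conjecture, since $\tilde{\gamma}_\infty(Q)$ need not be as large as $\tfrac12(\Delta(Q)+\omega(Q)+1)$ --- for instance when $Q$ has one high-degree vertex whose neighbors all have small degree and $Q$ has no large clique.) The natural candidate for $X$ is built around a vertex that is overloaded for the fractional coloring, e.g.\ one maximizing $y(v)$ or lying in many tight color classes; one then deletes the edges inside a maximum clique through that vertex to control the clique sizes, and uses a weight-redistribution inequality to match $\chi_f(Q)=\sum_{v}y(v)$ against the clique-average above.

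I expect this last step to be the main obstacle, for two reasons. First, the structure of $\chi_f$-critical graphs is far less understood than that of $\chi'$-critical graphs, and outside the class of line graphs there is no analogue of Tashkinov trees or Vizing fans --- precisely the tools that made the line-graph case (Theorems~\ref{LocalReed} and~\ref{KingEdwardsSuperLocal}) accessible. Second, because $\gamma_\infty$ \emph{averages} $\tfrac12(d+\omega+1)$ over a clique, a single high-degree vertex need not witness the bound by itself --- its clique-mates may have small degree --- so one cannot simply localize to a worst vertex. A sensible way to chip away at the conjecture would be to prove it first for triangle-free graphs, where the relevant maximal cliques are edges and the edge-deletion trick is cleanest, and then attempt to bootstrap, using the Edwards--King superlocal theorem (Theorem~\ref{KingEdwardsSuperLocal}) to absorb the clique-heavy, line-graph-like regions. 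Even establishing $\chi_f(Q)\le\tilde{\gamma}_\infty(Q)$ for quasi-line graphs would already be a meaningful first step.
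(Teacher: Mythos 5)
The statement you are addressing is stated in the paper as Conjecture~\ref{KingEdwardsReplacement}; the paper offers no proof of it, and your proposal does not supply one either. As you say yourself, the decisive step is missing: you never convert the tightness structure of an optimal fractional clique on a minimal counterexample into a subgraph $H'\subseteq Q$ and a clique $X$ of $H'$ with
\[
\frac{1}{\card{X}}\sum_{v\in X}\frac{d_{H'}(v)+\omega_{H'}(v)+1}{2}\ \ge\ \chi_f(Q).
\]
Everything you do establish --- that $\tilde{\gamma}_\infty$ is monotone by construction, that a minimal counterexample is critical for the threshold $\tilde{\gamma}_\infty(Q)$, and that an optimal fractional clique on it has full support --- is correct but generic: the same preliminaries are available for \emph{any} conjectured upper bound on $\chi_f$ and never engage with the specific quantity $\gamma_\infty$. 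Your side remarks (that $\tilde{\gamma}_\infty(Q)\ge\omega(Q)$ via a maximum clique, and that $C_5$ is tight) check out, but they do not advance the argument.

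To be clear about what the paper actually does with this conjecture: it is introduced only as a more induction-friendly surrogate for the Edwards--King conjecture (since $\gamma_r$ fails to be monotone for $r\ge 3$), and the paper's only results involving it are conditional (Conjecture~\ref{SuperDuperDuperLocalReed} follows from Conjecture~\ref{KingEdwardsReplacement} together with Goldberg--Seymour) or restricted to line graphs with the truncated parameter $\tilde{\gamma}_3$ (Theorem~\ref{SuperDuperLocalReed}), where Tashkinov-tree and fan machinery is available. Your diagnosis of why the general case is hard --- no analogue of that machinery for $\chi_f$-critical graphs, and the fact that $\gamma_\infty$ averages $\tfrac12(d+\omega+1)$ over a clique so no single vertex witnesses the bound --- is accurate, and your suggested entry points (triangle-free graphs, quasi-line graphs) are sensible research directions. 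But as written this is a research plan with an acknowledged hole at its center, not a proof.
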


\begin{conj}\label{SuperDuperDuperLocalReed}
If $Q$ is a line graph, then $\chi(Q) \le \ceil{\tilde{\gamma}_\infty(Q)}$.
\end{conj}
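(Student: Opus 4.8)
The plan is a minimal-counterexample argument built on a single elementary lower bound for $\tilde\gamma_\infty$. Suppose $Q$ is a line graph with $\chi(Q)>\ceil{\tilde\gamma_\infty(Q)}$ and with $\card{Q}$ as small as possible; then $Q$ is vertex critical, so $Q=L(G)$ for a critical graph $G$, and $\chi(Q)=\chi'(G)=k+1$. The basic tool is this: for every subgraph $G'\subseteq G$ and every vertex $u$ of $G'$, the $d_{G'}(u)$ edges at $u$ form a clique $X$ of $L(G')$, and each $x_e\in X$ with $e=uw$ has $d_{L(G')}(x_e)=d_{G'}(u)+d_{G'}(w)-\mu_{G'}(uw)-1$ and $\omega_{L(G')}(x_e)\ge\card{X}=d_{G'}(u)$, so averaging $\frac12(d+\omega+1)$ over $X$ (after enlarging $X$ to a maximal clique of $L(G')$, which for line graphs only adds a single parallel class and does not hurt the relevant estimates) gives
\[
\tilde\gamma_\infty(L(G))\ \ge\ d_{G'}(u)+\frac{1}{2\,d_{G'}(u)}\sum_{w}\mu_{G'}(uw)\bigl(d_{G'}(w)-\mu_{G'}(uw)\bigr)\ \ge\ d_{G'}(u),
\]
the middle quantity being strictly larger than $d_{G'}(u)$ unless every neighbour $w$ of $u$ in $G'$ satisfies $d_{G'}(w)=\mu_{G'}(uw)$. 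Applying this with $G'=G$ and $u$ of maximum degree gives $\tilde\gamma_\infty(L(G))\ge\Delta(G)$, so if $\chi'(G)\le\Delta(G)$ we already contradict the choice of $Q$.

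If $\chi'(G)=\Delta(G)+1$, we get a contradiction just as quickly: the improvement term at a maximum-degree vertex $z$ can vanish only if the component of $G$ containing $z$ is a multi-star centred at $z$, which would force $\chi'(G)=\Delta(G)$; hence $\tilde\gamma_\infty(L(G))>\Delta(G)$, so $\ceil{\tilde\gamma_\infty(L(G))}\ge\Delta(G)+1=\chi'(G)$, again impossible. So from now on $\chi'(G)>\Delta(G)+1$.

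Since $\chi(Q)>\Delta(G)$, Corollary~\ref{mainCor} gives $\chi(Q)\le\max\set{\W(G),\frac{5\Delta(Q)+8}{6}}$. Suppose first that $\chi'(G)=\W(G)$ (equivalently, $G$ is elementary) \emph{and} $\chi(Q)>\frac{5\Delta(Q)+8}{6}$; then $\beta\DefinedAs k-\frac56(\Delta(Q)+1)\ge-\frac13$, so Corollary~\ref{CriticalElementary56} applies and $G$ has at most one vertex with three or more distinct neighbours. Exactly as in the proof of Theorem~\ref{thm:main}, criticality of $G$ then forces the underlying simple graph of $G$ to be a cycle, necessarily odd by Proposition~\ref{easy-prop}, so $G$ is a thickened odd cycle. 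Writing the cycle as $v_1\cdots v_{2t+1}$ with edge multiplicities $x_1,\dots,x_{2t+1}$ of total $X$, so $\chi'(G)=\ceil{X/t}$, one then verifies $\ceil{\tilde\gamma_\infty(L(G))}\ge\ceil{X/t}$ by applying the displayed bound at a well-chosen $v_i$ — here the stars at interior vertices are already maximal cliques of $L(G)$, so the clean formula applies — and, where the local multiplicities $x_{i-1},x_i,x_{i+1}$ are too uneven, first passing to a shorter thickened odd cycle obtained by deleting edges; the finitely many small pairs $(t,X)$ and the triangle case $t=1$ (where $L(G)$ is a complete graph and the bound is immediate) are checked directly, in the same spirit as the closing computation of the proof of Theorem~\ref{thm:main}. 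The thickened $5$-cycles already make this tight, so the ceilings must be tracked carefully throughout.

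What is left is the main obstacle: the case where $G$ is critical with $\Delta(G)+1<\chi'(G)$ but $G$ is \emph{not} elementary — so that $\chi'(G)\le\frac{5\Delta(Q)+8}{6}$ by Corollary~\ref{mainCor} — together with the elementary boundary case $\chi'(G)=\frac{5\Delta(Q)+8}{6}$ exactly, in which $\beta=-\frac12$ and Corollary~\ref{CriticalElementary56} cannot be used, since its input Lemma~\ref{DegreeBoundedForMiddling} genuinely requires $\beta\ge-\frac13$. In this regime we no longer get the ``$G$ is almost a cycle'' structure for free, and the only route I see is to re-run the short-fan and Tashkinov-tree analysis of Sections~\ref{sec:short}--\ref{sec:thin}, carrying at each vertex the star-clique weight $\sigma(v)\DefinedAs\frac{1}{d_G(v)}\sum_{e\ni v}\frac12\bigl(d_Q(x_e)+\omega_Q(x_e)+1\bigr)$ in place of the crude degree bounds of Claim~1 of the proof of Theorem~\ref{mainhelper}, so that the same case split on $\nu(T)$ supplied by Lemma~\ref{MasterHelper} outputs $\ceil{\tilde\gamma_\infty(Q)}\ge\chi'(G)$ directly. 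A cleaner alternative, if it can be established, would be the structural statement that every critical graph with $\chi'(G)>\Delta(G)+1$ contains a thickened-odd-cycle subgraph $H$ with $\chi'(H)=\chi'(G)$, which would reduce everything to the thickened-cycle computation above. I expect essentially all of the real difficulty to lie in this final step; the rest is bookkeeping and small-case arithmetic.
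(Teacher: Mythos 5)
The statement you are proving is stated in the paper as a \emph{conjecture}, and the paper does not prove it: the theorem immediately following it only shows that Conjecture~\ref{SuperDuperDuperLocalReed} would follow from Conjecture~\ref{KingEdwardsReplacement} together with the Goldberg--Seymour Conjecture, both of which are open. The unconditional result the paper actually establishes is the strictly weaker Theorem~\ref{SuperDuperLocalReed}, namely $\chi(Q)\le\ceil{\tilde{\gamma}_3(Q)}$ (recall $\gamma_3\ge\gamma_\infty$, so the $\gamma_\infty$ bound is the stronger one). A complete proof of the $\tilde{\gamma}_\infty$ bound would therefore be a genuinely new result going beyond the paper.

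Your proposal is not such a proof, and you essentially concede this yourself. All of the difficulty is concentrated in the case you defer at the end: $G$ critical and non-elementary with $\chi'(G)>\Delta(G)+1$, together with the elementary boundary case $\beta<-\frac13$ where Corollary~\ref{CriticalElementary56} is unavailable. For that case you offer only a programme --- ``re-run the short-fan and Tashkinov-tree analysis carrying the star-clique weight $\sigma(v)$'' --- with no argument that the analogues of Claims 1--4 in the proof of Theorem~\ref{mainhelper} survive when the crude degree bounds are replaced by $\sigma(v)$; this is exactly the point at which the paper itself must invoke the Goldberg--Seymour Conjecture to finish. The alternative reduction you float (every critical graph with $\chi'>\Delta+1$ contains a thickened odd cycle $H$ with $\chi'(H)=\chi'(G)$) is itself a form of the hard content of Goldberg--Seymour and is likewise unproven. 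Even the cases you do treat contain unverified steps: the passage from the star at $u$ to a \emph{maximal} clique of $L(G')$ (required because $\C_\infty(Q)$ contains only maximal cliques, and adding vertices to a clique can lower the average) is waved through, and the thickened-odd-cycle verification of $\ceil{\tilde\gamma_\infty(L(G))}\ge\ceil{X/t}$ for uneven multiplicities is asserted rather than carried out. As it stands, the proposal establishes nothing beyond what the paper already proves unconditionally.
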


\begin{thm}
Conjecture \ref{SuperDuperDuperLocalReed} follows from Conjecture \ref{KingEdwardsReplacement} and the Goldberg-Seymour Conjecture.
\end{thm}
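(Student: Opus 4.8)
The plan is to write $Q=L(G)$, recall that $\chi(Q)=\chi'(G)$, and show $\chi'(G)\le\ceil{\tilde{\gamma}_\infty(Q)}$ by feeding $\chi'(G)$ through the Goldberg--Seymour bound $\chi'(G)\le\max\{\W(G),\Delta(G)+1\}$. First I would establish two cheap lower bounds on $\ceil{\tilde{\gamma}_\infty(Q)}$; together with Goldberg--Seymour these leave exactly one case, which I would then treat by hand.

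For the first bound, note that $\tilde{\gamma}_\infty(R)\ge\omega(R)$ for every graph $R$: taking $H=R$ and $X$ a maximum clique (which is maximal, hence in $\C_\infty(R)$), every $v\in X$ has $d_R(v)\ge\card{X}-1$ and $\omega_R(v)\ge\card{X}$, so each summand defining $\gamma_\infty(R)$ is at least $\card{X}$. Since the edges of $G$ at a vertex of degree $\Delta(G)$ form a clique of that size in $Q$, this gives $\ceil{\tilde{\gamma}_\infty(Q)}\ge\omega(Q)\ge\Delta(G)$. For the second bound I would use the hypothesis: for each subgraph $H\subseteq G$ the line graph $L(H)$ is a subgraph of $Q$, so $\chi_f(L(H))\le\chi_f(Q)\le\tilde{\gamma}_\infty(Q)$ by monotonicity of $\chi_f$ and Conjecture~\ref{KingEdwardsReplacement}; and $\chi_f(L(H))$ is the fractional chromatic index of $H$, which is at least $\size{H}/\floor{\card{H}/2}$ because every matching of $H$ has at most $\floor{\card{H}/2}$ edges. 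Taking ceilings and then the maximum over $H$ gives $\W(G)\le\ceil{\tilde{\gamma}_\infty(Q)}$. Combining the two bounds with Goldberg--Seymour, $\chi'(G)\le\ceil{\tilde{\gamma}_\infty(Q)}$ holds unless $\chi'(G)=\Delta(G)+1$, $\W(G)\le\Delta(G)$, and $\tilde{\gamma}_\infty(Q)\le\Delta(G)$; it remains to rule out this configuration.

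So suppose $\chi'(G)=\Delta(G)+1$, $\W(G)\le\Delta(G)$, and $\tilde{\gamma}_\infty(Q)\le\Delta(G)$. Let $C$ be a component of $G$ with $\chi'(C)=\Delta(G)+1$. Applying Goldberg--Seymour to $C$ and using $\W(C)\le\W(G)\le\Delta(G)<\chi'(C)$ forces $\Delta(C)+1\ge\chi'(C)=\Delta(G)+1$, so $\Delta(C)=\Delta(G)$ and $C$ is class~2; in particular $C$ is not a multigraph all of whose edges meet a common vertex (such multigraphs are class~1). Fix $v_0\in C$ with $d(v_0)=\Delta(G)$; by connectedness of $C$ there is then a neighbour $w$ of $v_0$ incident with an edge $e^{\ast}=wz$ with $z\neq v_0$. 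Let $H$ be the subgraph of $Q$ induced on the $\Delta(G)$ edges of $G$ at $v_0$ together with $e^{\ast}$. If $e^{\ast}$ is adjacent in $H$ to all edges at $v_0$, then $H=K_{\Delta(G)+1}$ and $\gamma_\infty(H)=\Delta(G)+1$; otherwise the star clique $S_{v_0}$ consisting of the edges at $v_0$ is a maximal clique of $H$, and averaging $\tfrac12(d_H(e)+\omega_H(e)+1)$ over $e\in S_{v_0}$ --- using $\omega_H(e)\ge\Delta(G)$ and that at least one edge of $S_{v_0}$ is adjacent to $e^{\ast}$ --- gives $\gamma_\infty(H)\ge\Delta(G)+\tfrac1{2\Delta(G)}$. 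Either way $\tilde{\gamma}_\infty(Q)\ge\gamma_\infty(H)>\Delta(G)$, a contradiction.

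The hard part is this last case, though nothing in it is deep --- the difficulties are of bookkeeping type. One must compute line-graph degrees correctly when $G$ has parallel edges (an edge $v_0u$ of multiplicity $\mu$ contributes a bundle of $\mu$ vertices to $H$, and parallel edges get over-counted in the naive degree formula), deduce from connectedness --- not merely from the local neighbourhood of $v_0$ --- that a class-2 component is not of the forbidden ``all edges through one vertex'' form, and, most delicately, pin down exactly which clique of the auxiliary subgraph $H$ is maximal, so that it genuinely belongs to $\C_\infty(H)$; I expect $S_{v_0}$ failing to be maximal precisely when $N(v_0)\subseteq\{w,z\}$ to be the place where a careless case split would slip. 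It is also worth noting that Goldberg--Seymour gets used twice, once on $G$ and once on the component $C$, the second application being what tames the genuinely multigraph-flavoured possibility that $\Delta(C)$ could a priori fall below $\Delta(G)$.
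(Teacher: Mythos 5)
Your proof is correct, and its skeleton matches the paper's: both feed the Goldberg--Seymour Conjecture and Conjecture~\ref{KingEdwardsReplacement} into each other, observe $\tilde{\gamma}_\infty(Q)\ge\omega(Q)\ge\Delta(G)$, and then dispose of the one surviving boundary case $\chi(Q)=\Delta(G)+1$ with $\ceil{\tilde{\gamma}_\infty(Q)}\le\Delta(G)$ by the same ``extra half'' averaging trick --- a maximal clique with an edge leaving it pushes $\gamma_\infty$ strictly above the clique size. Where you diverge is in how that trick is deployed. The paper takes $Q$ to be a \emph{minimal} counterexample, so $Q$ is connected, applies the averaging to a maximum clique of $Q$ itself, and concludes that $Q$ must be a clique, which is absurd. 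You avoid minimality entirely: you apply Goldberg--Seymour a second time to a class-two component $C$ of $G$ to pin down $\Delta(C)=\Delta(G)$, extract an explicit $(\Delta(G)+1)$-vertex witness subgraph $H\subseteq Q$ (a star clique plus one outside edge), and compute $\gamma_\infty(H)\ge\Delta(G)+\frac{1}{2\Delta(G)}$. Your route is longer and, as you note, needs more bookkeeping (maximality of $S_{v_0}$ in $H$, the class-two-component-is-not-a-star step), but it works for an arbitrary counterexample rather than a minimal one; moreover, your derivation of $\W(G)\le\ceil{\tilde{\gamma}_\infty(Q)}$ from the matching bound on subgraphs makes explicit a step the paper leaves implicit when it passes from the $\W$-form of Goldberg--Seymour to the bound $\chi(Q)\le\max\{\Delta(G)+1,\ceil{\chi_f(Q)}\}$.
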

\begin{proof}
Assume that Conjecture \ref{KingEdwardsReplacement} and the Goldberg--Seymour
Conjecture are true and Conjecture \ref{SuperDuperDuperLocalReed} is false. 
Let $Q$ be a minimal counterexample, and say $Q = L(G)$.   
Since Conjecutre~\ref{KingEdwardsReplacement} is true,
$\chi_f(Q)\le\tilde{\gamma}_\infty(Q)$.
Since the Goldberg--Seymour Conjecture is true,
$\chi(Q)\le\max\{\Delta(G)+1,\ceil{\chi_f(Q)}\} \le
\max\{\Delta(G)+1,\ceil{\tilde{\gamma}_\infty(Q)}\}$.
Since $Q$ is a counterexample
$\ceil{\tilde{\gamma}_\infty(Q)}
<\chi(Q)\le\Delta(G)+1\le \omega(Q)+1$.
This implies that 
$\ceil{\tilde{\gamma}_\infty(Q)}\le\omega(Q)$.  
However, also 
$\tilde{\gamma}_\infty(Q)\ge \omega(Q)$, by taking $X$ in the definition of 
$\tilde{\gamma}_\infty$ to be a maximum clique.  Further, since $Q$ is connected
by minimality, the inequality is strict if $X\subsetneq Q$.  So $Q$ is a clique.
But this yields the contradiction
$\omega(Q) = \ceil{\tilde{\gamma}_\infty(Q)} < \chi(Q)$.  
%
\end{proof}

We prove the next bound in the sequence begun by Theorems~\ref{LocalReed}
and~\ref{KingEdwardsSuperLocal}.  
\begin{thm}\label{SuperDuperLocalReed}
If $Q$ is a line graph, then $\chi(Q) \le \ceil{\tilde{\gamma}_3(Q)}$.
\end{thm}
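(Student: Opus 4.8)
The plan is to mirror the structure of Theorem~\ref{EasyBound2}: reduce to $G$ critical, handle thickened cycles by hand, and dispose of everything else with the claw-degree bounds already in place. Since $\gamma_3$ is \emph{not} monotone --- which is exactly why the statement uses the monotone closure $\tilde\gamma_3$ --- the first step is the reduction to $G$ critical: if $G$ is not critical, choose a critical $\widehat G\subseteq G$ with $\chi'(\widehat G)=\chi'(G)$ by deleting edges; then $L(\widehat G)$ is an induced subgraph of $Q=L(G)$, so $\chi(Q)=\chi'(\widehat G)\le\ceil{\tilde\gamma_3(L(\widehat G))}\le\ceil{\tilde\gamma_3(Q)}$. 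Henceforth $G$ is critical.

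The one genuinely new ingredient is the comparison
\[
\tfrac43\dclaw{G}\ \le\ \gamma_3(L(G))\ \le\ \tilde\gamma_3(L(G)).
\]
For the left inequality, assume $\dclaw{G}>0$ and fix $x\in V(G)$ together with a $3$-set $S=\set{y_1,y_2,y_3}\subseteq N(x)$ realizing $\dclaw{x}$; then $d_G(x)\ge 3$. The edges $e_i=xy_i$ span a triangle $X=\set{v_{e_1},v_{e_2},v_{e_3}}\in\C_3(L(G))$, and for each $i$ we have $d_{L(G)}(v_{e_i})=d_G(x)+d_G(y_i)-2$ and $\omega_{L(G)}(v_{e_i})\ge\max\set{d_G(x),d_G(y_i)}$ (the edges at $x$, respectively at $y_i$, form a clique of $L(G)$ through $v_{e_i}$). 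Writing $a=d_G(x)$ and $b_i=d_G(y_i)$ and evaluating the defining expression for $\gamma_3$ on $X$,
\[
\gamma_3(L(G))\ \ge\ \frac13\sum_{i=1}^{3}\frac{d_{L(G)}(v_{e_i})+\omega_{L(G)}(v_{e_i})+1}{2}\ \ge\ \frac16\Bigl(3a+\sum_i b_i-3+\sum_i\max\set{a,b_i}\Bigr),
\]
and the right-hand side is at least $\tfrac13\bigl(a+\sum_i b_i\bigr)=\tfrac43\dclaw{x}$ precisely because $a-3+\sum_i\max\set{a-b_i,\,0}\ge 0$, using $a\ge 3$. Taking the maximum over $x$ proves the inequality, so in particular $\ceil{\tfrac43\dclaw{G}}\le\ceil{\tilde\gamma_3(Q)}$.

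Now I would finish as follows. If $\chi'(G)=\Delta(G)$ then, being critical, $G$ is a bundle of parallel edges or a multi-star, so $Q$ is complete and $\gamma_3(Q)=\omega(Q)=\chi(Q)$. If $\chi'(G)=\Delta(G)+1$ then $\delta(G)\ge 2$ (a critical class-$2$ graph has no pendant edge) and $\Delta(G)\ge 3$ unless $G$ is an odd cycle, a case we check directly; picking $z$ with $d_G(z)=\Delta(G)$, the at-least-three edges at $z$ span a triangle of $L(G)$ each of whose vertices $v$ satisfies $\omega_{L(G)}(v)\ge\Delta(G)$ and $d_{L(G)}(v)\ge\Delta(G)$, whence $\gamma_3(Q)\ge\Delta(G)+\tfrac12$ and $\ceil{\tilde\gamma_3(Q)}\ge\Delta(G)+1=\chi'(G)$. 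Finally suppose $\chi'(G)\ge\Delta(G)+2$: if $G$ is not a thickened cycle, Theorem~\ref{EasyBound2} gives $\chi'(G)\le\ceil{\tfrac43\dclaw{G}}\le\ceil{\tilde\gamma_3(Q)}$, as wanted.

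This leaves the case that $G$ is a thickened cycle with $\chi'(G)\ge\Delta(G)+2$; such a $G$ has odd length and, being a thickened cycle (a class for which the Goldberg--Seymour conjecture holds), is elementary, so by Proposition~\ref{easy-prop} we have $\chi'(G)=\W(G)=\ceil{2(\size{G}-1)/(\card{G}-1)}$. Here I would argue directly, in the spirit of the concluding paragraph of the proof of Theorem~\ref{thm:main}: writing the cyclic edge-multiplicities as $m_1,\dots,m_n$, a triangle of $L(G)$ built from three edges incident to a common cycle vertex of degree at least $3$ yields a lower bound on $\gamma_3(Q)$ which, after the same elementary manipulation of $n=\card{G}$ and $\size{G}$ used there (with the finitely many small values of $n$ checked by hand), dominates $\ceil{2(\size{G}-1)/(\card{G}-1)}=\chi'(G)$. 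I expect this thickened-cycle arithmetic to be the only step beyond routine bookkeeping; everything else is either the already-proved Theorem~\ref{EasyBound2} or the short case check behind $\tfrac43\dclaw{G}\le\gamma_3(L(G))$, the only conceptual point being that the reduction to critical graphs forces the use of $\tilde\gamma_3$ in place of $\gamma_3$.
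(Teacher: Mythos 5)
Your overall route matches the paper's: reduce to $G$ critical via monotonicity of $\tilde{\gamma}_3$, show $\frac43\dclaw{G}\le\gamma_3(L(G))$ so that Theorem~\ref{EasyBound2} disposes of every critical $G$ that is not a thickened cycle, and then treat thickened cycles directly. Your comparison $\frac43\dclaw{G}\le\gamma_3(L(G))$ is essentially the paper's Lemma~\ref{SuperDuperLocalReedHelper} (there phrased via the function $f(uv)$), and it survives the correction that in a multigraph $d_{L(G)}(v_{e_i})=d_G(x)+d_G(y_i)-\mu(xy_i)-1$, not $d_G(x)+d_G(y_i)-2$: the needed slack is $d_G(x)-\sum_i\mu(xy_i)\ge 0$ rather than $d_G(x)-3\ge 0$. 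Your handling of $\chi'(G)\le\Delta(G)+1$ also has a soft spot: for an edge $e=zw$ at a maximum-degree vertex you assert $d_{L(G)}(v_e)\ge\Delta(G)$, which requires $d_G(w)\ge\mu(zw)+1$; the paper instead gets $\ceil{\gamma_3(Q)}\ge\Delta(G)+1$ by averaging over a maximum clique $M$ of $Q$ and observing that, unless $V(Q)=M$, some vertex of $M$ has degree at least $\omega(Q)$.

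The genuine gap is the thickened-cycle case, which you defer with ``I expect this arithmetic to be routine bookkeeping.'' It is the substantive half of the paper's proof and is not a single local estimate. Two ingredients are missing. First, a case split on the multiplicities: if some $x_j=1$ then $G$ minus one edge is bipartite, so $\chi'(G)\le\Delta(G)+1$ and one falls back on the $\Delta(G)+1$ bound; this case must be removed before any averaging, because the triangles used below need two parallel edges in each class. Second, for all $x_i\ge 2$, a triangle of three edges at a common vertex chosen arbitrarily does not obviously dominate $\chi'(G)=\ceil{X/t}$; the paper takes the specific triangles $\set{u_i^1,u_{i+1}^1,u_{i+1}^2}$ (one edge of class $i$ together with two parallel edges of class $i+1$), each of which yields the weighted bound $\frac16\parens{x_{i-1}+4x_i+6x_{i+1}+4x_{i+2}}$, and then averages cyclically over $i$ to get $\tilde{\gamma}_3(Q)\ge\frac{5X}{4t+2}\ge\frac{X}{t}$ for $t\ge 2$ --- a global averaging argument, not a pointwise one, and one that (unlike the end of Theorem~\ref{thm:main}) leaves no small values of $t$ to check by hand. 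Without exhibiting these triangles and carrying out the average, the proof of the theorem is incomplete exactly where the new work lies.
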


Before proving Theorem~\ref{SuperDuperLocalReed}, we prove the following lemma,
which aids in the proof of Theorem~\ref{SuperDuperLocalReed}.

\begin{lem}\label{SuperDuperLocalReedHelper}
Let $Q = L(G)$ where $G$ is a critical graph. If $G$ is not a thickened cycle, then $\chi(Q) \le \ceil{\gamma_3(Q)}$.
\end{lem}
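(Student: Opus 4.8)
The plan is to combine Theorem~\ref{EasyBound2} with two lower bounds on $\gamma_3(Q)$, each obtained by exhibiting an explicit triangle (clique of size three) in $Q=L(G)$. All translation between $G$ and $Q$ rests on two elementary identities: for an edge $e=uv$ of $G$ one has $d_Q(e)=d_G(u)+d_G(v)-\mu(uv)-1$, and $\omega_Q(e)\ge d_G(v)$, since the $d_G(v)$ edges of $G$ at $v$ form a clique of $Q$ containing $e$. Write $\Delta$ for $\Delta(G)$ throughout, and recall $\chi(Q)=\chi'(G)$.

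The first step translates the claw bound. If $x\in V(G)$ has three distinct neighbors $v_1,v_2,v_3$, then $X=\{xv_1,xv_2,xv_3\}$ is a clique of size $3$ in $Q$, hence $X\in\C_3(Q)$. The two identities give $\frac{d_Q(xv_i)+\omega_Q(xv_i)+1}{2}\ge\frac{d_G(x)+2d_G(v_i)-\mu(xv_i)}{2}$ for each $i$; summing over $i$ and using $\sum_i\mu(xv_i)\le d_G(x)$ yields $\gamma_3(Q)\ge\frac13\bigl(d_G(x)+\sum_i d_G(v_i)\bigr)$, and choosing $v_1,v_2,v_3$ so as to maximize the right side makes this $\gamma_3(Q)\ge\frac43\dclaw{x}$. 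Maximizing over $x$, $\ceil{\gamma_3(Q)}\ge\ceil{\frac43\dclaw{G}}$.

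The second step translates the $\Delta+1$ bound. Fix $z\in V(G)$ with $d_G(z)=\Delta$ and suppose $\Delta\ge3$. Then $z$ meets at least three edges, say $e_i=zw_i$ for $i\in\{1,2,3\}$ (the $w_i$ need not be distinct), and $\{e_1,e_2,e_3\}$ is a clique of size $3$ in $Q$. Each $e_i$ has $d_Q(e_i)\ge\Delta-1$ and $\omega_Q(e_i)\ge\Delta$, so $\frac{d_Q(e_i)+\omega_Q(e_i)+1}{2}\ge\Delta$, giving $\gamma_3(Q)\ge\Delta$. If moreover $\chi'(G)=\Delta+1$, I would then argue that every $w_i$ has a neighbor other than $z$: otherwise all edges at $w_i$ run to $z$, and in any $\Delta$-edge-coloring of $G-zw_i$ (one exists since $G$ is critical) a color missing at $z$ is automatically missing at $w_i$ as well, so that coloring extends to $G$, contradicting $\chi'(G)=\Delta+1$. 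Granting this, $d_Q(e_i)\ge\Delta$ for each $i$, so each term is at least $\Delta+\frac12$ and $\gamma_3(Q)\ge\Delta+\frac12$, hence $\ceil{\gamma_3(Q)}\ge\Delta+1$.

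It remains to assemble. By Theorem~\ref{EasyBound2}, $\chi(Q)=\chi'(G)\le\max\{\Delta+1,\ceil{\frac43\dclaw{G}}\}$. If $\chi'(G)\le\ceil{\frac43\dclaw{G}}$, the first step finishes. Otherwise $\chi'(G)\le\Delta+1$, and since $G$ has a vertex of degree $\Delta$ we get $\chi'(G)\in\{\Delta,\Delta+1\}$; when $\Delta\ge3$ the second step gives $\ceil{\gamma_3(Q)}\ge\chi'(G)$ in either case, and when $\Delta\le2$ a critical $G$ that is not a thickened cycle must be a single edge (of multiplicity at most $2$) or a path on three vertices, so $Q$ is $K_1$ or $K_2$ and the inequality is immediate. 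The step I expect to be the real obstacle is the $+\frac12$ gain above: it is genuinely necessary, since already for $G=K_{1,3}$ one has $\gamma_3(L(G))=\Delta$ exactly, so without it the bound would not reach $\Delta+1$. That gain rests on the structural fact that a critical multigraph with $\chi'=\Delta+1$ has no vertex whose neighborhood is a single vertex --- a short Vizing-style recoloring argument --- and this is the only place where criticality and the class-$2$ hypothesis are used essentially; everything else is bookkeeping with the two line-graph identities.
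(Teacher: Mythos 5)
Your proof is correct. The first half --- deriving $\ceil{\frac43\dclaw{G}} \le \ceil{\gamma_3(Q)}$ from the identities $d_Q(xv)=d_G(x)+d_G(v)-\mu(xv)-1$ and $\omega(xv)\ge d_G(v)$, then discharging $\sum_i\mu(xv_i)\le d_G(x)$ --- is essentially the paper's computation (there packaged through the auxiliary function $f(uv)$). Where you genuinely diverge is in absorbing the $\Delta(G)+1$ term of Theorem~\ref{EasyBound2}. The paper stays entirely inside $Q$: it takes a maximum clique $M$ (of size $\omega(Q)\ge\Delta(G)$), averages $\frac{d+\omega+1}{2}$ over $M$, and gains the needed $+1$ by noting that either $V(Q)=M$ (so $\chi(Q)=\omega(Q)=\ceil{\gamma_3(Q)}$) or some vertex of $M$ has a neighbor outside $M$, pushing the average strictly above $\omega(Q)$. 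You instead work in $G$: you take three edges at a maximum-degree vertex $z$ and gain $+\frac12$ per term from the structural fact that in a critical graph with $\chi'=\Delta+1$ no neighbor of $z$ has its entire neighborhood equal to $\{z\}$, proved by a short recoloring. Both routes are sound; the paper's buys a purely combinatorial argument on $Q$ with no further edge-coloring, while yours isolates a clean fact about critical class-two multigraphs and handles the degenerate $\Delta(G)\le 2$ cases more carefully --- indeed the paper's assertion that a maximum clique of $Q$ has size at least $3$ fails for $G=P_3$, although the conclusion is trivial there, so your explicit small-case analysis is if anything the more complete treatment.
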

\begin{proof}
Suppose $G$ is not a thickened cycle. For $uv\in E(G)$, put 
\[f(uv) \DefinedAs \max\{d_G(u)+\frac12(d_G(v)-\mu(uv)),d_G(v)+\frac12(d_G(u)-\mu(uv)\}.\]
For $uv \in E(G)$, we have
\begin{align*}
f(uv) &= \frac{d_G(u) + d_G(v) - \mu(uv) + \max\set{d_G(u), d_G(v)}}{2}\\
&\le \frac{d_Q(uv) + \omega(uv) + 1}{2}.
\end{align*}
Since $G$ is critical, we have $\card{N(v)} \ge 2$ for all $v \in V(G)$.  
Since $G$ is not a thickened cycle, we may choose $x\in V(G)$ and $S\subseteq
N(x)$ with $\card{S} = 3$ such that $x$ and $S$ achieve maximality in the definition of $d_{claw}(G)$. 
Say $S = \set{v_1, v_2, v_3}$.  Then
\begin{align*}
\ceil{\frac43\dclaw{G}} &= \ceil{\parens{\frac43}\parens{\frac14}\parens{d_G(x)+\sum_{i \in \irange{3}} d_G(v_i)}} \\
&\le \ceil{\frac13\sum_{i \in \irange{3}} d_G(v_i) + \frac12(d_G(x) - \mu(xv_i))}\\
&\le \ceil{\frac13\sum_{i \in \irange{3}} f(xv_i)}\\
&\le \ceil{\frac13\sum_{i \in \irange{3}} \frac{d_Q(xv_i) + \omega(xv_i) + 1}{2}}\\
&\le \ceil{\gamma_3(Q)}.
\end{align*}
By Theorem \ref{EasyBound2}, we have
\begin{equation}\label{EasyBoundEquation}
\chi(Q) \le \max\set{\Delta(G) + 1, \ceil{\gamma_3(Q)}}.
\end{equation}

Let $M \subseteq V(Q)$ be a maximum clique in $Q$.  
Since $G$ is not a thickened cycle, $\card{M} \ge 3$, so we can choose $X
\subseteq M$ with $\card{X} = 3$ maximizing
\[\frac{1}{3}\sum_{v \in X} \frac{d(v) + \omega(v) + 1}{2}.\]
We have
\begin{align*}
\gamma_3(Q) &\ge \frac{1}{3}\sum_{v \in X} \frac{d(v) + \omega(v) + 1}{2}\\
&\ge \frac{1}{\card{M}}\sum_{v \in M} \frac{d(v) + \omega(v) + 1}{2}\\
&\ge \omega(Q) + \sum_{v \in M} \frac{d(v) + 1 - \omega(v)}{2}.
\end{align*}
If $V(Q) = M$, then $\ceil{\gamma_3(Q)} = \omega(Q) = \Delta(Q) = \chi(Q)$, as desired.
Otherwise, some $v \in M$ has $d(v) \ge \omega(Q)$ and hence $\ceil{\gamma_3(Q)} \ge \omega(Q) + 1 \ge \Delta(Q) + 1$.  Using 
\eqref{EasyBoundEquation}, this gives $\chi(Q) \le \ceil{\gamma_3(Q)}$.
\end{proof}

\begin{proof}[Proof of Theorem \ref{SuperDuperLocalReed}]
Suppose Theorem \ref{SuperDuperLocalReed} is false, and choose a counterexample
$Q$ minimizing $\card{Q}$.  Say $Q = L(G)$.  The minimality of $\card{Q}$ and
monotonicity of $\tilde{\gamma}_3$ imply that $G$ is critical.  So Lemma
\ref{SuperDuperLocalReedHelper} gives
a contradiction unless $G$ is a thickened cycle.  

Suppose that $G$ is a thickened cycle.  
We first show that 
$\ceil{\tilde{\gamma}_3(Q)}\ge \Delta(G)+1$, which will show that $G$ is
elementary.
Since the theorem is trivially true for cycles we can assume $\Delta(G)\ge 3$.
We take $Y$ to be any clique of size 3\ in $Q$ corresponding to edges incident
to a maximum degree vertex of $G$. Now $Y$ witnesses that
$\tilde{\gamma}_3(Q)>\Delta(G)$, so $\ceil{\tilde{\gamma}_3(Q)}\ge
\Delta(G)+1$, as desired.  
Since $Q$ is a counterexample, we must have $\chi(Q)>\Delta(G)+1$.
Since $G$ is a thickened cycle, trivially every vertex is short.  Thus,
Theorem~\ref{AllSpecialImpliesElementary} implies that $G$ is elementary, i.e.,
$\chi'(G)=\W(G)$.  In fact, since $G$ is critical,
$\chi'(G)=\ceil{\frac{2\size{G}}{\card{G}-1}}$.

Since $G$ is elementary, $\card{G}$ is
odd, so say $\card{G}=2t+1$.  Denote the vertices of $G$ by $\{v_1,\ldots,
v_{2t+1}\}$, and let $x_i = \mu(v_iv_{i+1})$ for each $i\in [2t+1]$.  Let
$X=\sum_{i=1}^{2t+1}x_i$.  First suppose there exists $j$ such that $x_j=1$.
So there exists $e\in E(G)$ such that $G-e$ is bipartite.  Thus,
$\chi'(G-e)=\Delta(G-e)\le \Delta(G)$, so $\chi'(G)\le \Delta(G)+1$.
Since $\ceil{\tilde{\gamma}_3(Q)}\ge \Delta(G)+1$, as shown above,
the theorem holds.

Now assume instead that $x_i\ge 2$ for all $i\in[2t+1]$.
For each $i$, let $u^1_i$ and $u^2_i$ be vertices of $Q$ corresponding to edges
counted by $x_i$.  Note that $\frac{d(u^j_i)+\omega(u^j_i)}2 \ge
\frac{x_{i-1}+2x_i+2x_{i+1}}2$, for each $i\in[2t+1]$ and each $j\in[2]$.
Now averaging over $\{u^1_i, u^1_{i+1}, u^2_{i+1}\}$
gives
$\tilde{\gamma}_3(Q) \ge \frac16\left(x_{i-1}+4x_i+6x_{i+1}+4x_{i+2}\right).$
When we average over all $i \in [2t+1]$, we get $\tilde{\gamma}_3(Q) \ge
\frac{5X}{4t+2}$.  Since $G$ is elementary, we know $\chi'(G) = \ceil{\frac{X}t}$.  
Now the theorem holds since $\frac1t \le \frac5{4t+2}$, whenever $t \ge 2$.
%
\end{proof}  
\bibliographystyle{plain}
\bibliography{GraphColoring1}

\end{document}